\newcommand{\cF}{\mathcal{F}}
\newcommand{\ip}[2]{\ensuremath{\langle #1,#2\rangle}}
\newcommand{\Rset}{\mathbb{R}}
\newcommand{\ri}{\mbox{ri}}
\newcommand{\cone}{\mbox{cone}}
\newtheorem{assumption}{Assumption}
\journalname{}
\begin{document}

\author{ Michael Lamm \and Shu Lu \and Amarjit Budhiraja.
}
\title{Individual confidence intervals for true solutions to stochastic variational inequalities}

\institute{ M. Lamm \at Department of Statistics and Operations Research, University of North Carolina at Chapel Hill, B05 Hanes Hall, CB\#3260, Chapel Hill, NC 27599-3260 \\ \email{mlamm@email.unc.edu}
\and S. Lu \at Department of Statistics and Operations Research, University of North Carolina at Chapel Hill, 355 Hanes Hall, CB\#3260, Chapel Hill, NC 27599-3260\\  \email{shulu@email.unc.edu}
\and A. Budhiraja \at Department of Statistics and Operations Research, University of North Carolina at Chapel Hill, 357 Hanes Hall, CB\#3260, Chapel Hill, NC 27599-3260 \\ \email{budhiraja@email.unc.edu}}.

\date{Received: date / Accepted: date}

\maketitle
\begin{abstract}
Stochastic variational inequalities (SVI) provide a means for modeling various optimization and equilibrium problems where data are subject to uncertainty. Often it is necessary to estimate the true SVI solution by the solution of a sample average approximation (SAA) problem. This paper proposes three methods for building confidence intervals for components of the true solution, and those intervals are computable from a single SAA solution. The first two methods use an ``indirect approach'' that requires initially computing asymptotically exact confidence intervals for the solution to the normal map formulation of the SVI. 
The third method directly constructs confidence intervals for the true SVI solution; intervals produced with this method meet a minimum specified level of confidence in the same situations for which the first two methods are applicable. We justify the three methods theoretically with weak convergence results, discuss how to implement these methods, and test their performance using two numerical examples.
\end{abstract}
\keywords{confidence interval \and stochastic variational inequality \and sample average approximation \and stochastic optimization}
\subclass{90C33 \and 90C15 \and 65K10 \and 62F25}
\section{Introduction}\label{s:intro}

This paper considers the problem of building individual confidence intervals for components of the true solution to a stochastic variational inequality (SVI). An SVI is defined as follows. Let \( (\Omega, \cF, P\)) be a probability space, and $\xi$ be a $d$-dimensional random vector defined on $\Omega$ and supported on a closed subset \( \Xi\) of \(\Rset^d\). Let \(O\) be an open subset of \(\Rset^n\), and \(F\) be a measurable function from \(O\times \Xi\) to \(\Rset^n\), such that for each \(x\in O\), \(E\|F(x,\xi)\| < \infty\). Let \(S\) be a polyhedral convex set in \(\Rset^n\). The SVI problem is to find a point \(x\in S\cap O\) such that
\begin{equation}\label{q:true}
 0\in f_0(x) + N_S(x),
\end{equation}
where \(f_0(x)=E \left[ F(x,\xi)\right] \) and \(N_S(x)\subset \Rset^n\) denotes the normal cone to \(S\) at \(x\):
\[
N_S(x)=\left\{v\in \Rset^n | \langle v, s-x \rangle \le 0  \mbox{ for each } s\in S\right\}.
\]
Here \(\langle\cdot , \cdot \rangle\) denotes the scalar product of two vectors of the same dimension.

Variational inequalities provide a means for modeling a variety of optimization and equilibrium problems, see \cite[Chapter 1]{faccpang:fdvi}. Stochastic variational inequalities allow for the incorporation of uncertainty in the model data. As an expectation function, \(f_0\) often does not have a closed form expression and is difficult to evaluate. In such circumstances the problem (\ref{q:true}) is replaced by a suitable approximation. This paper considers the case when a sample average approximation (SAA) is used. The SAA method takes independent and identically distributed (i.i.d) random vectors \( \xi^1,\xi^2,\dots,\xi^N\) with the same distribution as \(\xi\) and constructs a sample average function \(f_N:O\times \Omega \to \Rset^n \) as
\begin{equation}\label{q:saa_fun}
f_N(x,\omega)=N^{-1}\sum_{i=1}^{N} F(x,\xi^i(\omega)).
\end{equation}
The SAA problem is to find for given $\omega\in \Omega$ a point \(x\in O \cap S \) such that
\begin{equation}\label{q:saa}
0\in f_N(x,\omega) + N_S(x).
\end{equation}
We will use $x_0$ to denote a solution to (\ref{q:true}) and refer to it as the true solution, and use $x_N$ to denote a solution to (\ref{q:saa}) and call it an SAA solution; the formal definitions of $x_0$ and $x_N$ will be given in Assumption \ref{assu2} and Theorem \ref{t:asy_dis} respectively.

A natural question to ask is how well the SAA solutions approximate the true solution. An answer to this question depends on the convergence behavior of SAA solutions.  Under certain regularity conditions, SAA solutions are known to converge almost surely to a true solution as the sample size \(N\) goes to infinity, see G\"{u}rkan, \"{O}zge and Robinson \cite{gur.oze.rob:asc}, King and Rockafellar \cite{king.rock:conv}, and Shapiro, Dentcheva and Ruszczy\'{n}ski \cite[Section 5.2.1]{shap.dent.rus:lsp}. Xu \cite{xu:exc} showed the convergence of SAA solutions to the set of true solutions in probability at an exponential rate under  some assumptions on the moment generating functions of certain random variables; related results on the exponential convergence rate are given in \cite{sha.xu:smp}. Working with the exponential rate of convergence of SAA solutions, Anitescu and Petra in \cite{solcibs} developed confidence intervals for the optimal value of stochastic programming problems using bootstrapping. The asymptotic distribution of SAA solutions was obtained in King and Rockafellar \cite[Theorem 2.7]{king.rock:conv} and Shapiro, Dentcheva and Ruszczy\'{n}ski \cite[Section 5.2.2]{shap.dent.rus:lsp}. For random approximations to deterministic optimization problems, universal confidence sets for the true solution set were developed by Vogel in \cite{v:csets} using concentration of measure results.

The objective of this paper is to provide methods to compute confidence intervals for each individual component of the true solution $x_0$ from a single SAA solution $x_N$, based on the asymptotic distribution of SAA solutions. To our knowledge, this line of work started from the dissertation \cite{dem:acr} of Demir. By considering the \emph{normal map formulation} (to be defined formally in \S \ref{s:back}) of variational inequalities, Demir used the asymptotic distribution to obtain an expression for confidence regions of the solution to the normal map formulation of (\ref{q:true}), which we denote by $z_0$ (the formal definition of $z_0$ is in Assumption \ref{assu2}). Because some quantities in that expression depend on the true solutions $x_0$ and $z_0$ and are not computable, Demir proposed a substitution method to make that expression computable. He did not, however, justify why that substitution method preserves the weak convergence property needed for the asymptotic exactness of the confidence regions. The general nonsmooth structure of $S$ creates issues related to   discontinuity of certain quantities, which prevents standard techniques from being applicable for the required justification.

In \cite{l.b:acr} Lu and Budhiraja continued to consider the normal map formulations of both (\ref{q:true}) and (\ref{q:saa}). They provided and justified a new method of constructing asymptotically exact confidence regions for $z_0$, computable from a solution to the normal map formulation of a single SAA problem (\ref{q:saa}); the latter solution is denoted by $z_N$ and is formally defined in Theorem \ref{t:asy_dis}. The approach in \cite{l.b:acr} was to combine the asymptotic distribution of $z_N$ with its exponential rate of convergence, and its computation involved calculating a weighted-sum of a family of functions. The method was later simplified by Lu in \cite{l:acr} by using a single function from the family. When $z_N$ does not asymptotically follow a normal distribution, confidence regions generated from \cite{l:acr} and \cite{l.b:acr} are fractions of multiple ellipses pieced together.
  Lu \cite{lu:crc} proposed a different method to construct asymptotically exact confidence regions, by using only the asymptotic distribution and not the exponential convergence rate. The method in \cite{lu:crc} has the advantage that the confidence region generated from it is with high probability a single ellipse, even when the asymptotic distribution of $z_N$ is not normal, and is therefore easier to use. Nonetheless, methods in \cite{l:acr,l.b:acr} provide valuable information beyond confidence regions. In the present paper we will show how to use such information to compute individual confidence intervals for the true solutions. Even with the estimators from  \cite{l:acr,l.b:acr} in place, it is not straightforward to obtain asymptotically exact individual confidence intervals, due to the piecewise linear structure that underlies the asymptotic distributions of $z_N$ and $x_N$. How to reduce the computational burden related to that piecewise linear structure is another challenge. Those difficulties are what we aim to overcome in this paper.

 Compared to confidence regions, component-wise confidence intervals are usually more convenient to visualize and interpret. By finding the axis-aligned minimal bounding box of a confidence region of $z_0$ (or $x_0$), one can find simultaneous confidence intervals, that jointly contain $z_0$ (or $x_0$) with a probability no less than a prescribed confidence level. However, individual confidence intervals that can be obtained by using confidence regions are too conservative for any practical use, especially for large scale problems. Individual confidence intervals provide a quantitative measure of the uncertainty of each individual component, and therefore carry important information not covered by simultaneous intervals. Lu \cite{lu:crc} proposed a method to construct individual confidence intervals for $z_0$, but that method relies on some restrictive assumptions to guarantee the specified level of confidence is met. The methods we develop in this paper are shown to achieve the guaranteed confidence levels in more general situations.


As noted above, the confidence region/interval methods in \cite{dem:acr,l:acr,lu:crc,l.b:acr} are mainly designed for $z_0$. The points $z_0$ and $x_0$ are related by the equality $x_0 = \Pi_S(z_0)$. From a confidence region of $z_0$, one can obtain a confidence region for $x_0$, by projecting the confidence region of $z_0$ onto $S$. The resulting set will cover $x_0$ with a rate at least as large as the coverage rate of the original confidence region for $z_0$. Simultaneous confidence intervals of $x_0$ can then be obtained from the minimum bounding box of its confidence region. When $S$ is a box, individual confidence intervals of $x_0$ can also be obtained from projecting the individual confidence intervals of $z_0$. We shall refer to such approaches as ``indirect approaches.'' The indirect approaches are convenient to implement when the set \(S\) is a box, or has a similar structure that facilitates taking (individual) projections. Beyond those situations, it would be hard to use the indirect approaches for finding confidence intervals for $x_0$. Another contribution of the present paper is to provide a \emph{direct approach} to finding confidence intervals for $x_0$.

Altogether, this paper presents three new methods for constructing individual confidence intervals, justifies them with weak convergence results, discusses how to implement these methods, and provides numerical examples. The first two methods belong to the aforementioned indirect approaches. They produce confidence intervals for $z_0$ from a single $z_N$, and the asymptotic level of confidence can be specified under general situations. The third method is a direct approach that produces individual confidence intervals for $x_0$. The intervals produced by the third method meet a specified minimum level of confidence in the same situations for which the first two methods are applicable. 
 While our main interest in this paper is on stochastic variational inequalities and their normal map formulations, the ideas of the first two methods work for general piecewise linear homeomorphisms.
We outline the ideas below, and leave formal definitions and proofs to Sections \ref{s:back} and \ref{s:theory}. Throughout, we use \(\mathcal{N}(\nu, \Sigma)\) to denote a Normal random vector with mean \(\nu \) and covariance matrix \(\Sigma\), and use  \(Y_n\Rightarrow Y\) to denote the weak convergence of random variables $Y_n$ to $Y$. For a vector \(v \in \mathbb{R}^n\), \( (v)_j\) will denote the \(j^{\tiny\mbox{th}}\) coordinate. Similarly for a function \(f:\mathbb{R}^n \rightarrow \mathbb{R}^n\), \( (f)_j\) will denote the \(j^{\tiny\mbox{th}}\) component function. We use \( \| \cdot \| \) to denote the norm of an element in a normed space; unless a specific norm is stated it can be any norm, as long as the same norm is used in all related contexts.

For the first two methods, suppose \(f:\Rset^n \rightarrow \Rset^n\) is a piecewise linear homeomorphism with a family of selection functions \( \left\{ M_1,\dots,M_l \right\}\) and the corresponding conical subdivision \(\left\{ K_1,\dots,K_l\right\}\), so $f$ is represented by the linear map $M_i$ when restricted to $K_i$. Suppose $z_N$ is an $n$-dimensional random vector such that $\sqrt{N} (z_N-z_0)\Rightarrow f^{-1} (Z)$, where $z_0\in\Rset^n$ is an unknown parameter, \( Z \sim \mathcal{N}(0,I_n)\), and $I_n$ is the $n\times n$ identity matrix.  Our objective is to obtain a confidence interval for $(z_0)_j$, $j=1,\cdots,n$. The idea of the first method is to look for a number $a$ such that $\Pr(|(f^{-1})_j(Z)|\le a)$ equals a prescribed confidence level, and then use $[(z_N)_j -a{N}^{-1/2}, (z_N)_j +a{N}^{-1/2}]$ as the interval. For situations considered in this paper, $z_0$ and $z_N$ are solutions to the normal map formulations of (\ref{q:true}) and (\ref{q:saa}) respectively, and the function $f$ is unknown and is substituted by an estimator obtained from approaches in \cite{l:acr} and \cite{l.b:acr}. Such a substitution does not affect the asymptotic exactness of confidence intervals computed from this method, as we show in Theorem \ref{t:mainres}. In addition, to allow for some choice in where the interval is centered, we introduce a parameter $r$ and consider the probability $\Pr(|(f^{-1})_j(Z)-r|\le a)$.

A challenge that arises with the first method is that when the function $f$ is piecewise linear we lack a closed form expression for the value of $a$. The computation of $\Pr(|(f^{-1})_j(Z)-r|\le a)$ for fixed $a$ and $r$ requires enumerating all pieces of $f^{-1}$, and for each such piece one needs to compute the probability for some normal random vector to belong to a certain polyhedron. Thus, the  calculations necessary to find a confidence interval increase with the number of pieces in $f$. These limitations lead to the consideration of upper bounds for interval half-widths, presented in \S  \ref{s:comp}, and the development of the second method in this paper.

The second method uses the idea of conditioning. Suppose that for each $\omega\in \Omega$ we can identify a cone $K(\omega)$, such that with high probability $K(\omega)$ belongs to the family \(\left\{ K_1,\dots,K_l\right\}\) and contains $z_N - z_0$ in its interior; for situations in this paper this can be done using an approach in \cite{l:acr}. For the fixed $\omega$ we then look for a number $a(\omega)$ such that the following conditional probability
\[\frac{\Pr\left(|(f^{-1})_j(Z)|\le a(\omega),\; f^{-1}Z\in K(\omega)\right)}{\Pr\left(f^{-1}Z\in K(\omega)\right)}\]
equals a prescribed confidence level, and use $[(z_N)_j -a(\omega){N}^{-1/2}, (z_N)_j +a(\omega){N}^{-1/2}]$ as a confidence interval for $(z_0)_j$. 
We will again use an estimator to replace the unknown $f$, and justify the method with a convergence result (Theorem \ref{t:meth2}). The second method dramatically reduces the computation needed for the first method, by focusing on a single cone $K(\omega)$ and avoiding the enumeration of all pieces of $f$.

The third method also uses the idea of conditioning, but it is a direct approach and is different from the second method. In general, one cannot apply the first two methods or the method in \cite{lu:crc} directly to compute individual confidence intervals for $x_0$, because if one would put the asymptotic distribution of $x_N$ in the form $\sqrt{N}(x_N - x_0) \Rightarrow f(Z)$ for some function $f$ then $f$ is generally non-invertible. Such non-invertibility relates to a fact that there is possibly a nonzero probability for some components of $x_N$ and $x_0$ to coincide, a situation that does not occur when considering solutions to the normal map formulations. The third method handles that non-invertibility by looking into the exact cause of such non-invertibility, and produces intervals that meet a minimum specified level of confidence in the same situations for which the first two methods are shown to be asymptotically exact. In the proof of the convergence result for this method (Theorem \ref{t:icim}), we see that the intervals it produces exceed the specified level of confidence only if the corresponding components of $x_N$ and $x_0$ have a nonzero probability to coincide. When the latter situation happens, the third method returns a point estimate with a nonzero probability.

The organization of this paper is as follows. Section \ref{s:back} reviews pertinent background material on piecewise linear functions, the normal map formulation and previous asymptotics results. The main theoretical results of this paper are presented in \S \ref{s:theory}, and \S  \ref{s:comp} presents computational approaches for calculating intervals using these results. The paper concludes in \S \ref{s:examps} with two numerical examples.

\section{Background}\label{s:back}

In this section we discuss the normal map formulation of a variational inequality, pertinent properties of piecewise linear functions, the notion of B-differentiability and previous works on this topic.

 For \(f_0\) and \( S\) defined as above the \emph{normal map} induced by \(f_0\) and \(S\) is a function \( (f_0)_S: \Pi_S^{-1}(O) \rightarrow \mathbb{R}^n\), defined as
\begin{equation}\label{q:nmtrue}
(f_0)_S(z)=f_0(\Pi_S(z))+(z-\Pi_S(z)).
\end{equation}
Here \( \Pi_S \) denotes the Euclidian projector onto the set \(S\) and \( \Pi_S^{-1}(O) \) is the set of all points \( z \in \mathbb{R}^n \) such that \( \Pi_S(z) \in O\). One can check that \(x \in S \cap O \) is a solution to (\ref{q:true}) only if \(z=x-f_0(x)\) satisfies
\begin{equation}\label{q:nm_tform}
(f_0)_S(z)=0.
\end{equation}
When the above equality is satisfied, one also has \( \Pi_S(z)=x\). We refer to (\ref{q:nm_tform}) as the normal map formulation of (\ref{q:true}).

The normal map induced by \(f_N\) and \(S\) is similarly defined to be a function on \( \Pi_S^{-1}(O)\):
\begin{equation}\label{q:nmsaa}
(f_N)_S(z)=f_N(\Pi_S(z))+(z-\Pi_S(z)).
\end{equation}
The normal map formulation of the SAA problem (\ref{q:saa}) is then
\begin{equation}\label{q:nm_sform}
(f_N)_S(z)=0,
\end{equation}
where (\ref{q:nm_sform}) and (\ref{q:saa}) are related in the same manner as (\ref{q:nm_tform}) and (\ref{q:true}). In general for a function \(G\) mapping from a subset \(D\) of \(\mathbb{R}^n\) back into \(\mathbb{R}^n\), the normal map $G_S$ induced by \(G\) and \(S\) is a map from \(\Pi^{-1}_S(D)\) into \(\mathbb{R}^n\) with \(G_S(z)=G(\Pi_S(z))+z-\Pi_S(z)\).

Since \(S\) is a polyhedral convex set, the Euclidian projector \(\Pi_S\) is a piecewise affine function. A continuous function \(f:\mathbb{R}^n \rightarrow \mathbb{R}^k\) is piecewise affine if there exists a finite family of affine functions \(f_j: \mathbb{R}^n \rightarrow \mathbb{R}^k\), \(j=1,\dots,m\), such that for all \(x \in \mathbb{R}^n\) \(f(x) \in \left\{ f_1(x),\dots,f_m(x) \right\} \). The affine functions \(f_i\), \(i=1,\dots,m\), are referred to as the selection functions of \(f\). When each selection function is linear the function \(f\) is called piecewise linear.

Closely related to piecewise affine functions is the concept of a polyhedral subdivision. A polyhedral subdivision of \( \mathbb{R}^n\) is defined to be a finite collection of convex polyhedra, \( \Gamma=\{ \gamma_1, \dots, \gamma_m\} \subset \mathbb{R}^n \), satisfying the following three conditions:
\begin{enumerate}
\item Each \( \gamma_i\) is of dimension \(n\).
\item The union of all the \(\gamma_i\) is \ \( \mathbb{R}^n\).
\item The intersection of any two \( \gamma_i \) and \( \gamma_j \), \( 1 \le i \ne  j \le m\), is either empty or a common proper face of both \( \gamma_i\) and \( \gamma_j\).
\end{enumerate}
If each of the \( \gamma_i\) is additionally a cone, then \( \Gamma \) is referred to as a conical subdivision. As seen in \cite[Proposition 2.2.3]{sch:ipdf}, for every piecewise affine function \( f \) there is a corresponding polyhedral subdivision of \( \mathbb{R}^n\) such that the restriction of \( f \) to each \( \gamma_i\) is an affine function. When \( f\) is piecewise linear the corresponding subdivision is conical, and the restriction of \(f \) to each cone of the subdivision a linear function.

We now return to the special case of the Euclidian projector onto a polyhedral convex set \(S \subset \mathbb{R}^n\), a thorough discussion of which can be found in \cite[Section 2.4]{sch:ipdf}.  Let \( \mathcal{F}\) be the finite collection of all nonempty faces of \(S\). On the relative interior of each nonempty face \(F\in \mathcal{F}\) the normal cone to \(S\) is a constant cone, denoted as \(N_S(\ri F)\), and \(C_F=F+N_S(\ri F)\) is a polyhedral convex set of dimension \(n\). The collection of all such sets \(C_F\) form the polyhedral subdivision of \( \mathbb{R}^n\) corresponding to \(\Pi_S\). This collection of sets is also referred to as the normal manifold of \(S\), with each \(C_F\) called an \(n\)-cell in the normal manifold. Each \(k\)-dimensional face of an \(n\)-cell is called a \(k\)-cell in the normal manifold for \(k=0,1,\dots,n\). The relative interiors of all cells in the normal manifold of \(S\) form a partition of \(\mathbb{R}^n\).

Next we introduce the concept of B-differentiability. A function \(h:\mathbb{R}^n \rightarrow \mathbb{R}^m\) is said to be B-differentiable at a point \(x \in \mathbb{R}^n\) if there exists a positive homogeneous function, \(H:\mathbb{R}^n \rightarrow \mathbb{R}^m\), such that
\[
h(x+v)=h(x)+H(v)+o(v).
\]
Recall that a function \(G\) is positive homogeneous if for all positive numbers \( \lambda \in \mathbb{R} \) and points \( x\in \mathbb{R}^n\) \(G(\lambda x)=\lambda G(x)\). The function \(H\) is referred to as the B-derivative of \(h\) at $x$ and will be denoted \(dh(x)\). When in addition to \(dh(x)\) being positive homogeneous it is also linear, \(dh(x)\) is the classic Fr\'{e}chet derivative (F-derivative). A function \(h: U \times V \to Z\) is partially B-differentiable in \(x\) at \((x_0,y_0)\in U\times V\), if the function \(h(\cdot,y_0)\) is B-differentiable at \(x_0\). The partial B-derivative is denoted by \(d_xh(x_0,y_0)\).

A piecewise affine function \(f\), while not F-differentiable at all points, is B -differentiable everywhere. More precisely, let \(\Gamma\) be the polyhedral subdivision associated with \(f\). At points \(x\) in the interior of a polyhedra \(\gamma_i \in \Gamma\), \(df(x)\) is a linear function equal to \(df_i(x)\), the F-derivative of the corresponding selection function $f_i$. When \(x\) lies in the intersection of two or more polyhedra let  \( \Gamma (x)=\left\{ \gamma_i \in \Gamma | x \in \gamma_i \right\}\), \(I=\left\{i | \gamma_i \in \Gamma(x) \right\}\) and \(\Gamma'(x)=\left\{ \cone(\gamma_i-x) | i \in I  \right\} \). That is, $\Gamma (x)$ is the collection of elements in $\Gamma$ that contain $x$, and $\Gamma'(x)$ is the ``globalization'' of $\Gamma(x)$ along with a shift of the origin. With this notation, \(df(x)\) is piecewise linear with a family of selection functions \(\left\{ df_i(x) | i \in I\right\} \) and the corresponding conical subdivision \( \Gamma'(x)\).

The following four assumptions are used to prove pertinent asymptotic properties of SAA solutions.

\begin{assumption}\label{assu1}
(a)   \(E\|F( x,\xi)\|^2 < \infty\) for all \(x \in O\).\\
\noindent (b) The map \(x \mapsto F(x, \xi(\omega))\) is continuously differentiable on \(O\) for a.e. \(\omega \in \Omega\).\\
\noindent (c) There exists a square integrable random variable \(C\) such that for all \(x,x'\in O\)
\[
\|F(x, \xi(\omega)) - F(x', \xi(\omega))\| + \|d_xF(x, \xi(\omega)) - d_xF(x', \xi(\omega))\| \le C(\omega) \|x-x'\|,
\]
for a.e. \(\omega \in \Omega\).
\end{assumption}

From Assumption \ref{assu1} it follows that \(f_0\) is continuously differentiable on \( O\), see, e.g., \cite[Theorem 7.44]{shap.dent.rus:lsp}. For any nonempty compact subset \(X\) of \(O\), let \(C^{1} (X, \Rset^n)\) be the Banach space of continuously differentiable mappings \(f:X \to \Rset^n\), equipped with the norm
\begin{equation}\label{q:norm_C1}
\|f\|_{1,X} = \sup_{x\in X} \|f(x)\| + \sup_{x\in X} \|d f(x)\|.
\end{equation}
Then in addition to providing nice integrability properties for \(f_N\), as shown in \cite[Theorem 7.48]{shap.dent.rus:lsp} Assumption 1 will guarantee the almost sure convergence of the sample average function \(f_N\) to \(f_0\) as an element of \(C^{1} (X, \Rset^n)\) and that \(df_0(x)=E\left[ d_xF(x,\xi) \right]\).

Before stating the second assumption we must first define two sets related to the geometric structure of the set \(S\). For polyhedral convex \(S\), the tangent cone to \(S\) at a point \(x \in S\) is defined to be
\[
T_S(x)= \{v\in \Rset^n | \mbox{ there exists } t > 0  \mbox{ such that } x + tv \in S\},
\]
and the critical cone to \(S\) at a point \(z \in \mathbb{R}^n\) is
\[
K(z)=T_S(\Pi_S(z)) \cap \{z-\Pi_S(z)\}^\perp.
\]

\begin{assumption}\label{assu2}
Suppose that \(x_0\) solves the variational inequality (\ref{q:true}). Let \(z_0=x_0-f_0(x_0)\), \(L=d f_0(x_0)\), \(K_0=T_S(x_0) \cap \{z_0-x_0\}^\perp\), and assume that the normal map  \(L_{K_0}\) induced by \(L\) and \(K_0\) is a homeomorphism from \(\Rset^n\) to \(\Rset^n\).
\end{assumption}

\begin{assumption}\label{assu3}
Let \( \Sigma_0\) denote the covariance matrix of \(F(x_0,\xi)\). Suppose that the determinant of \(\Sigma_0\) is strictly positive.
\end{assumption}

\begin{assumption}\label{assu4}
(a) For each \(t \in \mathbb{R}^n\) and \(x \in X\), let
\[
M_x(t)=E\left[ \exp \left\{ \ip{t}{F(x,\xi)-f_0(x)} \right\} \right]
\]
be the moment generating function of the random variable \(F(x,\xi)-f_0(x)\). Assume
\begin{enumerate}
\item There exists \( \zeta > 0\) such that \( M_x(t) \le \exp \left\{ \zeta^2\|t\|^2/2\right\} \) for every \(x \in X\) and every \( t \in \mathbb{R}^n\).
\item There exists a nonnegative random variable \( \kappa\) such that
\[
\|F(x,\xi(\omega))-F(x',\xi(\omega))\| \le \kappa(\omega) \|x-x'\|
\]
for all \(x,x' \in O\) and almost every \(\omega \in \Omega\).
\item The moment generating function of \(\kappa\) is finite valued in a neighborhood of zero.\\
\end{enumerate}
(b) For each \(T\in\mathbb{R}^{n \times n}\) and \(x \in X\), let
\[
\mathcal{M}_x(T)=E\left[ \exp \left\{ \ip{T}{d_xF(x,\xi)-df_0(x)} \right\} \right]
\]
be the moment generating function of the random variable \(d_xF(x,\xi)-df_0(x)\). Assume
\begin{enumerate}
\item There exists \( \varsigma >0\) such that \(\mathcal{M}_x(T) \le \exp \left\{ \varsigma^2 \|T\|^2/2 \right\}\) for every \(x \in X\) and every \( T \in \mathbb{R}^{n \times n}\).
\item There exists a nonnegative random variable \( \nu\) such that
\[
\|d_xF(x,\xi(\omega))-d_xF(x',\xi(\omega))\| \le \nu(\omega) \|x-x'\|
\]
for all \(x,x' \in O\) and almost every \( \omega \in \Omega\).
\item The moment generating function of \( \nu \) is finite valued in a neighborhood of zero.
\end{enumerate}
\end{assumption}

Assumptions \ref{assu1} and \ref{assu2} ensure that the variational inequality (\ref{q:true}) has a locally unique solution under sufficiently small perturbations of \(f_0\) in \(C^1(X,\mathbb{R}^n)\), see \cite[Lemma 1]{l.b:acr} and the original result in \cite{smr:sav}. It is worth mentioning the relation between the normal map $L_{K_0}$ in Assumption 2 and the normal map $(f_0)_S$. As a piecewise affine function, $\Pi_S$ is B-differentiable. If we use $\Gamma$ to denote the normal manifold of $S$, then $\Gamma$ is also the polyhedral subdivision for $\Pi_S$. Following the discussion of B-differentiability above Assumption 1, $\Gamma'(z_0)$ denotes the conical subdivision that corresponds to  \( d\Pi_S(z_0)\). Since \(f_0\) is differentiable from Assumption 1, the chain rule of B-differentiability implies \( (f_0)_S \) to be B-differentiable, with its B-derivative at \(z_0\) given by
\begin{equation}\label{q:bdivtrue}
d(f_0)_S(z_0)(h)=df_0(x_0)(d\Pi_S(z_0)(h))+h-d\Pi_S(z_0)(h).
\end{equation}
 The conical subdivision for \( d(f_0)_S(z_0)\) is also $\Gamma'(z_0)$. Moreover, as shown in \cite[Corollary 4.5]{rob:ift} and \cite[Lemma 5]{jsp:kcone}, for any point \(z \in \mathbb{R}^n\) and \(h \in\mathbb{R}^n\) sufficiently small the equality
\begin{equation}\label{q:SnK}
\Pi_S(z+h)=\Pi_S(z)+\Pi_{K(z)}(h)
\end{equation}
holds, which implies
\begin{equation}\label{q:dSnK}
d\Pi_S(z)=\Pi_{K(z)} \mbox{ for any }z \in \Rset^n.
\end{equation}
Applying (\ref{q:dSnK}) to $z_0$, one can see the normal map \(L_{K_0}\) is exactly \(d(f_0)_S(z_0)\), a result that first appeared in \cite{rob:nmlt}. Finally, note that the B-derivative for the normal map \( (f_N)_S \), denoted by \(d(f_N)_S(\cdot)\), will take an analogous form to (\ref{q:bdivtrue}).

We shall use \(\Sigma_N\) to denote the sample covariance matrix of \(\left\{F(x_N,\xi_i)\right\}_{i=1}^N\), where \(x_N\) is an SAA solution to be formally defined in Theorem \ref{t:asy_dis}. Under Assumptions \ref{assu1} and \ref{assu2}, \(\Sigma_N\)  converges almost surely to \(\Sigma_0\), see \cite[Lemma 3.6]{lu:crc}. This combined with Assumption \ref{assu3} implies that for almost every \(\omega \in \Omega\) there exists an \(N_{\omega}\) such that \(\Sigma_N(\omega)\) is invertible for \(N \ge N_{\omega}\).

From Assumption \ref{assu4} it follows that \(f_N\) converges to \(f_0\) in probability at an exponential rate, as shown in \cite[Theorem 4]{l.b:acr} which is based on a general result \cite[Theorem 7.67]{shap.dent.rus:lsp}. That is, there exist positive real numbers \(\beta_1, \mu_1, M_1\) and \(\sigma_1\), such that the following holds for each \(\epsilon >0\) and \(N\):
\begin{equation}\label{q:funexpcon}
\Pr \left( \|f_N-f_0\|_{1,X} \ge \epsilon \right) \le \beta_1 \exp \left\{-N\mu_1\right\} +\frac{M_1}{\epsilon^n}\exp \left\{-\frac{N\epsilon^2}{\sigma_1} \right\}.
\end{equation}
Finally, note that Assumption 4 implies all conditions in Assumption 1; we put Assumption 1 as a separate assumption because some intermediate results do not require the stronger Assumption 4.

The following theorem is adapted from \cite[Theorem 7]{l.b:acr} and provides results relating to the asymptotic distribution of solutions to (\ref{q:saa}) and (\ref{q:nm_tform}).
\begin{theorem}\label{t:asy_dis}
Suppose that Assumptions \ref{assu1} and \ref{assu2} hold. Let $Y_0$ be a normal random vector in $\Rset^n$ with zero mean and covariance matrix $\Sigma_0$. Then there exist neighborhoods $X_0$ of $x_0$ and $Z$ of $z_0$ such that the following hold. For almost every $\omega \in \Omega$, there exists an integer $N_{\omega}$, such that for each $N \ge N_{\omega}$, the equation
(\ref{q:nm_sform})
 has a unique solution $z_N$ in $Z$, and the variational inequality (\ref{q:saa}) has a unique solution in $X_0$ given by $x_N=\Pi_S(z_N)$. Moreover, $\lim\limits_{N\to\infty}z_N = z_0$ and $\lim\limits_{N\to\infty}x_N = x_0$ almost surely,
\begin{equation}\label{q:zN_dist_cov}
\sqrt{N}(z_N - z_0) \Rightarrow (L_{K_0})^{-1}(Y_0),
\end{equation}
\begin{equation}\label{q:zN_dist_cov2}
\sqrt{N} L_{K _0}(z_N- z_0) \Rightarrow Y_0,
\end{equation}
and
\begin{equation}\label{q:xN_dist_cov}
\sqrt{N}(\Pi_S(z_N) -\Pi_S( z_0)) \Rightarrow \Pi_{K_0}\circ (L_{K_0})^{-1}(Y_0).
\end{equation}

Suppose in addition that Assumption \ref{assu4} holds. Then there exist positive real numbers \( \epsilon_0, \beta_0, \mu_0, M_0\) and \(\sigma_0\), such that the following holds for each \( \epsilon \in (0,\epsilon_0]\) and each \(N\):
\begin{eqnarray}
 && \Pr\left( \|x_N-x_0\| < \epsilon \right)  \ge \Pr\left( \|z_N-z_0\| < \epsilon \right) \nonumber \\[-1.5ex]
\label{q:exp_conv}\\[-1.5ex]
 && \ge 1-\beta_0\exp\left\{-N\mu_0\right\}-\frac{M_0}{\epsilon^n}\exp \left\{ \frac{-N\epsilon^2}{\sigma_0}\right\}.  \nonumber
\end{eqnarray}
\end{theorem}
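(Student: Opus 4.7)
The plan is to combine Robinson's implicit function theorem for normal maps with a B-differentiable delta-method argument, and then transfer the exponential tail bound (\ref{q:funexpcon}) from $f_N$ to $z_N$ and $x_N$.

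\emph{Local solvability and a.s.\ convergence.} Fix a compact neighborhood $X$ of $x_0$ in $O$. Assumption \ref{assu1} gives $f_N\to f_0$ almost surely in $C^1(X,\Rset^n)$, and Assumption \ref{assu2} says the B-derivative $L_{K_0}$ of $(f_0)_S$ at $z_0$ is a homeomorphism. Robinson's normal-map implicit function theorem (\cite{smr:sav}, as invoked in \cite[Lemma 1]{l.b:acr}) then produces neighborhoods $Z$ of $z_0$ and $X_0$ of $x_0$ and a locally Lipschitz map $g\mapsto z(g)$ such that, for every $g\in C^1(X,\Rset^n)$ close enough to $f_0$, $g_S$ has a unique zero $z(g)\in Z$ and the VI induced by $g$ has $\Pi_S(z(g))\in X_0$ as its unique solution. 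Applying this to $g=f_N$ delivers $N_\omega$, the solutions $z_N$ and $x_N=\Pi_S(z_N)$, and their almost-sure convergence to $z_0$ and $x_0$.

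\emph{Weak convergence.} From $(f_N)_S(z_N)=0=(f_0)_S(z_0)$ I obtain the identity
\[
(f_0)_S(z_N)-(f_0)_S(z_0)=(f_0)_S(z_N)-(f_N)_S(z_N)=f_0(x_N)-f_N(x_N).
\]
The CLT gives $\sqrt{N}(f_N(x_0)-f_0(x_0))\Rightarrow Y_0$, and a standard stochastic-equicontinuity argument using Assumption \ref{assu1}(c) together with $x_N\to x_0$ a.s.\ upgrades this to $\sqrt{N}(f_N(x_N)-f_0(x_N))\Rightarrow Y_0$. B-differentiability of $(f_0)_S$ at $z_0$ with B-derivative $L_{K_0}$ (which is positively homogeneous) then yields
\[
\sqrt{N}\,L_{K_0}(z_N-z_0)=\sqrt{N}\bigl((f_0)_S(z_N)-(f_0)_S(z_0)\bigr)+o_P(1),
\]
where the $o_P(1)$ remainder is legitimate because the local Lipschitz property of $z(\cdot)$ combined with $\sqrt{N}\|f_N-f_0\|_{1,X}=O_P(1)$ forces $\sqrt{N}(z_N-z_0)=O_P(1)$. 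This proves (\ref{q:zN_dist_cov2}); since $L_{K_0}$ is a homeomorphism and $Y_0$ has the same law as $-Y_0$, the continuous mapping theorem gives (\ref{q:zN_dist_cov}). For (\ref{q:xN_dist_cov}) I invoke (\ref{q:SnK}) to obtain $x_N-x_0=\Pi_{K_0}(z_N-z_0)$ for $N$ large, and apply continuous mapping to $\Pi_{K_0}$, using its positive homogeneity to pull $\sqrt{N}$ inside.

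\emph{Exponential bound.} Under Assumption \ref{assu4}, (\ref{q:funexpcon}) is a quantitative tail estimate for $\|f_N-f_0\|_{1,X}$. The local Lipschitz bound $\|z_N-z_0\|\le\lambda\|f_N-f_0\|_{1,X}$ from the first step transfers this to a tail estimate for $\|z_N-z_0\|$; nonexpansivity of $\Pi_S$ yields $\|x_N-x_0\|\le\|z_N-z_0\|$, giving the first inequality in (\ref{q:exp_conv}). Shrinking $\epsilon_0$ so the Lipschitz regime applies and substituting $\epsilon/\lambda$ for $\epsilon$ in (\ref{q:funexpcon}) produces the claimed constants $\beta_0,\mu_0,M_0,\sigma_0$. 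The main obstacle throughout is that $(f_0)_S$ is not Fr\'echet differentiable at $z_0$, so the classical Gaussian delta method does not apply; the argument instead rests on B-differentiability together with positive homogeneity of $L_{K_0}$, which permits $\sqrt{N}$ to be pulled through, and on the quantitative Lipschitz control coming from Robinson's theorem to ensure the linearization remainder is genuinely $o_P(1)$ after $\sqrt{N}$-scaling.
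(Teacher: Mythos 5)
Your proposal is correct and follows essentially the same route as the proof of this result in the source the paper cites (the paper does not prove Theorem \ref{t:asy_dis} itself but adapts it from \cite[Theorem 7]{l.b:acr}): Robinson's implicit function theorem for normal maps for local unique solvability and Lipschitz dependence, the identity $(f_0)_S(z_N)-(f_0)_S(z_0)=f_0(x_N)-f_N(x_N)$ combined with a B-differentiable delta method and asymptotic equicontinuity of $\sqrt{N}(f_N-f_0)$ for the weak limits, and the Lipschitz estimate plus nonexpansivity of $\Pi_S$ to transfer the exponential tail bound (\ref{q:funexpcon}) to $z_N$ and $x_N$. No gaps of substance; only the routine bookkeeping (e.g.\ that shrinking $\epsilon_0$ keeps $f_N$ inside the neighborhood where the Lipschitz estimate from Robinson's theorem is valid) is left implicit.
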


The asymptotic distributions in (\ref{q:zN_dist_cov}), (\ref{q:zN_dist_cov2}) and (\ref{q:xN_dist_cov}) depend on \(z_0\) through \(\Sigma_0\), \(L_{K_0}=d(f_0)_S(z_0)\) and \(\Pi_{K_0}=d\Pi_S(z_0)\). How to estimate these functions using \(z_N\)  and the sample data requires special attention, since \(d\Pi_S(z_N)\) does not always converge to \(d\Pi_S(z_0)\). While \(d\Pi_S(\cdot)\) is the same function for all points in the relative interior of a cell in the normal manifold of \(S\) \cite[Section 5.2]{l.b:acr}, the function changes dramatically across different cells. In particular, if \(z_0\in \ri C_j\), where \(C_j\) is a \(k\)-cell in the normal manifold of \(S\) with \(k < n\), then \( d\Pi_S(z_0)\) is piecewise linear with multiple pieces. In contrast, as shown in \cite[Proposition 3.5]{lu:crc}, under Assumptions \ref{assu1} and \ref{assu2}, the probability of \(d\Pi_S(z_N)\) and \( d(f_N)_S(z_N) \) being linear maps goes to one as the sample size \(N\) goes to infinity. Thus, as long as $z_0$ does not belong to the interior of an $n$-cell in the normal manifold of $S$, \(d\Pi_S(z_N)\) does \emph{not} converge to \( d\Pi_S(z_0)\).

In \cite{l:acr} this issue was addressed by choosing a point near \(z_N\), but not necessarily $z_N$ itself, to use in the estimate for \(d\Pi_S(\cdot)\). To choose such a point, for each cell \(C_i\) in the normal manifold of \(S\) define a function \(d_i:\mathbb{R}^n \rightarrow \mathbb{R}\) by
\begin{equation}\label{q:nmcelldist}
d_i(z)=d(z,C_i)=\min\limits_{x \in C_i}\|x-z\|,
\end{equation}
and a function \( \Psi_i:\mathbb{R}^n\rightarrow \mathbb{R}^n\) by
\begin{equation}\label{q:cellbdiv}
\Psi_i(\cdot)=d\Pi_S(z)(\cdot) \mbox{  for any } z \in \ri C_i.
\end{equation}
In (\ref{q:nmcelldist}) any norm for vectors in \(\mathbb{R}^n\) can be chosen, and in (\ref{q:cellbdiv}) any \(z \in \ri C_i\) can be chosen since \(d\Pi_S(z)\) is the same function on the relative interior of a cell.
Next, choose a function \(g:\mathbb{N} \rightarrow \mathbb{R} \)  satisfying
\begin{enumerate}
\item \(g(N)> 0\) for each \(N \in \mathbb{N}\).
\item \(\lim\limits_{N \rightarrow \infty} g(N)= \infty\).
\item \(\lim\limits_{N \rightarrow \infty} \frac{N}{g(N)^2}= \infty\).
\item \(\lim\limits_{N \rightarrow \infty} g(N)^n \exp \left\{ -\sigma_0\frac{N}{(g(N))^2} \right\}=0\) for \(\sigma_0=\min \left\{ \frac{1}{4\sigma_0},\frac{1}{4\sigma_1},\frac{1}{4\sigma_0(E[C])^2}\right\}\), where \(\sigma_0,\) and \(\sigma_1\) are as in (\ref{q:funexpcon}) and (\ref{q:exp_conv}) respectively and \(C\) as in Assumption \ref{assu1}.
\item \(\lim\limits_{N \rightarrow \infty}\frac{N^{n/2}}{ g(N)^n} \exp \left\{ -\sigma g(N)^2 \right\}=0\) for each positive real number \( \sigma\).
\end{enumerate}
Note that \(g(N)=N^p\) for any \(p \in (0,1/2)\) satisfies 1--5.

Now for each integer \(N\) and any point \(z \in \mathbb{R}^n\), choose an index \(i_0\) by letting \(C_{i_0}\) be a cell that has the smallest dimension among all cells \(C_i\) such that \(d_i(z) \le 1/g(N)\). Then define functions \( \Lambda_N(z):\mathbb{R}^n \rightarrow \mathbb{R}^n\) by
\begin{equation}\label{q:bdivest}
\Lambda_N(z)(h)=\Psi_{i_0}(h),
\end{equation}
 and \(\Phi_N:\Pi^{-1}_S(O)\times \mathbb{R}^n \times \Omega \rightarrow \mathbb{R}^n\) by
\begin{equation}\label{q:finest}
\Phi_N(z,h,\omega)=df_N(\Pi_S(z))(\Lambda_N(z)(h))+h-\Lambda_N(z)(h).
\end{equation}
Moving forward we will be interested in \(\Phi_N(z_N(\omega),h,\omega)\), which for convenience we will express as \(\Phi_N(z_N)(h)\) with the \(\omega\) suppressed. We shall use \(z_N^*\) to denote a point in the relative interior of the cell \(C_{i_0}\) associated with $(N, z_N)$. With this notation it follows that \(d\Pi_S(z_N^*)=\Psi_{i_0}\) and
\begin{equation}\label{q:funcent}
\Phi_N(z_N)(h)=df_N(\Pi_S(z_N))(d\Pi_S(z_N^*)(h))+h-d\Pi_S(z_N^*)(h).
\end{equation}
As shown by Theorem \ref{t:asy_est} below, the function $\Lambda_N(z_N)$, which is the same as $d\Pi_S(z_N^*)$, provides a nice estimate for $d \Pi_S(z_0)$. The reason behind this result is the following. From (\ref{q:exp_conv}), there is a high probability for the collection of cells in the normal manifold of $S$ that are within a distance of $1/g(N)$ from $z_N$ to coincide with the collection of cells that contain $z_0$. Whenever this happens, $C_{i_0}$ is the cell that contains $z_0$ in its relative interior, and the two points $z_N^*$ and $z_0$ belong to the relative interior of the same cell $C_{i_0}$, with $d\Pi_S(z_N^*)=d\Pi_S(z_0)$. This observation will be used in the proofs of Theorems \ref{t:meth2} and \ref{t:icim} (with the definition of event $A_N$ in (\ref{q:consubcon})). Theorem \ref{t:asy_est} below was proved in \cite[Corollaries 3.2 and 3.3]{l:acr}.

\begin{theorem}\label{t:asy_est}
Suppose that Assumptions \ref{assu2} and \ref{assu4} hold. For each \( N \in \mathbb{N}\), let \( \Lambda_N\) and \(\Phi_N\) be as defined in (\ref{q:bdivest}) and (\ref{q:finest}). Then
\begin{equation}\label{q:bdcon}
\lim\limits_{N \rightarrow \infty} \Pr \left[ \Lambda_N(z_N)(h)=d\Pi_S(z_0)(h) \mbox{  for all } h \in \mathbb{R}^n\right] =1,
\end{equation}
and there exists a positive real number \(\theta\), such that
\begin{equation}\label{q:phiconv}
\lim\limits_{N \rightarrow \infty} \Pr \left[ \sup\limits_{h \in \mathbb{R}^n, h\ne 0} \frac{ \| \Phi_N(z_N)(h)-d(f_0)_S(z_0)(h)\|}{\| h\|} < \frac{\theta}{g(N)} \right] =1.
\end{equation}
Moreover suppose Assumption \ref{assu3} holds, and let \(\Sigma_N\) be as defined above. Then
\[
\sqrt{N} \Sigma_0^{-1/2}\Phi_N(z_N)(z_N-z_0) \Rightarrow \mathcal{N}(0,I_n),
\]
and
\begin{equation}\label{q:asy_est}
\sqrt{N} \Sigma_N^{-1/2}\Phi_N(z_N)(z_N-z_0) \Rightarrow \mathcal{N}(0,I_n).
\end{equation}
\end{theorem}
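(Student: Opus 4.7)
The strategy is to isolate a single ``good'' event $A_N$ on which the cell $C_{i_0}$ selected in the construction of $\Lambda_N(z_N)$ coincides with the unique cell of the normal manifold whose relative interior contains $z_0$; on such an event one automatically has $\Lambda_N(z_N) = d\Pi_S(z_0)$, and the estimation error of $\Phi_N(z_N)$ collapses to a smooth question about $df_N$ versus $df_0$ at nearby points. The exponential-convergence bounds (\ref{q:funexpcon}) and (\ref{q:exp_conv}), combined with conditions 3 and 4 on $g$, will drive $\Pr(A_N) \to 1$ at a rate compatible with the $1/g(N)$-error. Concretely, let $C_{z_0}$ denote the (unique) cell with $z_0 \in \ri C_{z_0}$, and set $\delta := \min\{d_j(z_0) : z_0 \notin C_j\}$, which is strictly positive because the normal manifold is finite. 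When $\|z_N - z_0\| < 1/g(N)$ and $1/g(N) < \delta/2$, every cell that contains $z_0$ lies within distance $1/g(N)$ of $z_N$, while every cell that does not contain $z_0$ is at distance at least $d_j(z_0) - \|z_N - z_0\| > \delta/2 > 1/g(N)$; hence the family of cells considered in selecting $i_0$ is exactly the family of cells containing $z_0$, within which $C_{z_0}$ is uniquely the minimum-dimension element (any other such cell has $C_{z_0}$ as a face). This gives $C_{i_0} = C_{z_0}$ and $\Lambda_N(z_N)(h) = d\Pi_S(z_0)(h)$; applying (\ref{q:exp_conv}) with $\epsilon = 1/g(N)$ and invoking condition 4 yields (\ref{q:bdcon}).

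On the same event $A_N$, the identity $d(f_0)_S(z_0)(h) = df_0(x_0)(d\Pi_S(z_0)(h)) + h - d\Pi_S(z_0)(h)$ together with (\ref{q:funcent}) gives
\[
\Phi_N(z_N)(h) - d(f_0)_S(z_0)(h) = \bigl[df_N(\Pi_S(z_N)) - df_0(x_0)\bigr]\,d\Pi_S(z_0)(h).
\]
Since $d\Pi_S(z_0) = \Pi_{K(z_0)}$ is non-expansive by (\ref{q:dSnK}), the supremum in (\ref{q:phiconv}) is dominated by
\[
\|df_N(\Pi_S(z_N)) - df_0(x_0)\| \le \|f_N - f_0\|_{1,X} + \lambda\,\|z_N - z_0\|,
\]
where $\lambda$ is a Lipschitz constant of $df_0$ near $x_0$ (Assumption~\ref{assu1}) and we have used nonexpansiveness of $\Pi_S$. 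Intersecting $A_N$ with $\{\|f_N - f_0\|_{1,X} < \theta/(2g(N))\}$ and $\{\lambda\|z_N - z_0\| < \theta/(2g(N))\}$ and invoking (\ref{q:funexpcon})--(\ref{q:exp_conv}) together with condition 4 on $g$ yields (\ref{q:phiconv}) for a suitable $\theta$.

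For the weak-convergence statements, use $d(f_0)_S(z_0) = L_{K_0}$ to decompose
\[
\sqrt{N}\,\Phi_N(z_N)(z_N - z_0) = \sqrt{N}\,L_{K_0}(z_N - z_0) + \sqrt{N}\,[\Phi_N(z_N) - d(f_0)_S(z_0)](z_N - z_0).
\]
By (\ref{q:zN_dist_cov2}) the first summand converges weakly to $Y_0 \sim \mathcal{N}(0,\Sigma_0)$; by (\ref{q:phiconv}) the second is bounded by $(\theta/g(N))\sqrt{N}\|z_N - z_0\|$, which is $o_P(1)$ since $\sqrt{N}\|z_N - z_0\| = O_P(1)$ by (\ref{q:zN_dist_cov}) and $1/g(N) \to 0$. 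Multiplying by $\Sigma_0^{-1/2}$ and applying Slutsky yields the $\mathcal{N}(0, I_n)$ limit, and replacing $\Sigma_0$ by $\Sigma_N$ follows from the a.s.\ convergence $\Sigma_N \to \Sigma_0$ noted before Theorem~\ref{t:asy_dis} combined with Assumption~\ref{assu3} and another Slutsky step. The principal obstacle is the first step: because $d\Pi_S(\cdot)$ is discontinuous across lower-dimensional cells and one generally does \emph{not} have $d\Pi_S(z_N) \to d\Pi_S(z_0)$, the dimension-minimizing selection at scale $1/g(N)$ is indispensable, and its success rests precisely on condition 4 on $g$ balancing the polynomial prefactor in (\ref{q:exp_conv}) against the exponential decay.
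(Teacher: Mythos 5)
Your proposal is correct and follows essentially the same route the paper takes: the paper does not reprove Theorem~\ref{t:asy_est} (it cites \cite[Corollaries 3.2 and 3.3]{l:acr}), but the mechanism it sketches just above the theorem statement --- the high-probability event on which the cells within $1/g(N)$ of $z_N$ are exactly the cells containing $z_0$, so that $C_{i_0}$ is the cell with $z_0$ in its relative interior and $\Lambda_N(z_N)=d\Pi_S(z_0)$, driven by (\ref{q:exp_conv}), (\ref{q:funexpcon}) and conditions 3--4 on $g$ --- is precisely the argument you flesh out, and your Slutsky step for (\ref{q:asy_est}) is the standard completion. The only points worth tightening are cosmetic: note that $\Pi_S(z_N)\in X$ on the good event for large $N$ so that $\|f_N-f_0\|_{1,X}$ controls $df_N$ at $\Pi_S(z_N)$, and that $\Sigma_N$ is invertible with probability tending to one before applying $\Sigma_N^{-1/2}$.
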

In contrast to (\ref{q:zN_dist_cov}) and (\ref{q:zN_dist_cov2}), the quantities in (\ref{q:asy_est}) are computable using only the sample data, providing a basis for building confidence regions of $z_0$. Additionally, (\ref{q:dSnK}) and (\ref{q:bdcon}) suggest the use of \(\Lambda_N\) as an estimate for \(\Pi_{K_0}\) when developing methods for building confidence intervals for $x_0$. Similar results were shown in \cite{l.b:acr} but with \(\Lambda_N\) taken to be a weighted average of all the functions \( \Psi_i\) satisfying \(d_i(z_N) \le 1/g(N) \).

In \cite{lu:crc} a different tack was taken on constructing confidence regions. Instead of estimating functions that converge to \(d(f_0)_S(z_0)\), it was shown that under Assumptions \ref{assu1} and \ref{assu2}, the difference of \(-\sqrt{N}d(f_N)_S(z_N)(z_0-z_N)\) and \(\sqrt{N}d(f_0)_S(z_0)(z_N-z_0)\) converges to zero in probability, and consequently that
\[
 -\sqrt{N}d(f_N)_S(z_N)(z_0-z_N) \Rightarrow Y_0.
\]
   Because \(d(f_N)_S(z_N)\) is a linear function with high probability, even when \(d(f_0)_S(z_0)\) is piecewise linear, the above expression provides an easier method to calculate confidence regions and simultaneous confidence intervals.

As noted earlier, confidence regions do not directly lead to useful individual confidence intervals. The papers \cite{l:acr} and \cite{l.b:acr} did not discuss how to compute individual confidence intervals, while \cite{lu:crc} provided a method for such computation. Below we briefly introduce the latter method.

 With the notation used above (\ref{q:bdivtrue}), let $\Gamma$ denote the normal manifold of $S$ and $\Gamma'(z_0)$ denote the conical subdivision that corresponds to  \( d\Pi_S(z_0)\), which is also the conical subdivision for \( d(f_0)_S(z_0)\). Suppose \(\Gamma'(z_0) =\left\{K_1,\dots,K_k\right\}\). Then for each \(i=1,\dots,k\), the restriction of \(d(f_0)_S(z_0)\) on $K_i$, which we denote by  \(\allowbreak d(f_0)_S(z_0)|_{K_i}\), coincides with a linear function; let $M_i$ be the matrix representing that linear function. Moreover under Assumption \ref{assu2}, \(d(f_0)_S(z_0)\) is a global homeomorphism so each matrix \(M_i\) is invertible. We then define \(Y^i=M_i^{-1}Y_0\). Since \(Y_0\) is a multivariate normal random vector each \(Y^i\) is a multivariate normal random vector with covariance matrix \(M_i^{-1}\Sigma_0M_i^{-T}\).

We define the number \( r_j^i=\sqrt{(M_i^{-1}\Sigma_0M_i^{-T})_{jj}} \) for each \(i=1,\dots,k\) and \(j=1,\dots,n\). Finally for each \(\alpha \in (0,1)\) let \(\chi^2_1(\alpha)\) be the \((1-\alpha)^{\tiny\mbox{th}}\) percentile of a \(\chi^2\) random variable with one degree of freedom. It then follows that
\[
\Pr\left( |(Y^i)_j| \le r_j^i \sqrt{\chi^2_1(\alpha)} \right)=1-\alpha.
\]
The following theorem on individual confidence intervals for components of \(z_0\) was proven in \cite[Theorem 5.1]{lu:crc}.

\begin{theorem}\label{t:indold}
Suppose that Assumptions \ref{assu1}, \ref{assu2} and \ref{assu3} hold. Let \(K_i, M_i, Y^i\) and \(r_j^i\) be defined as above. For each integer \(N\) with \(d(f_N)_S(z_N)\) being an invertible linear map, define a number
\[
r_{Nj}=\sqrt{(d(f_N)_S(z_N)^{-1}\Sigma_Nd(f_N)_S(z_N)^{-T})_{jj} }
\]
for each \(j=1,\dots,n\). Let $r_{Nj}=0$ if  \(d(f_N)_S(z_N)\) is not an invertible linear map. Then for each real number \(\alpha \in (0,1)\) and for each \(j=1,\dots,n\),
\begin{eqnarray}
 && \lim\limits_{N \rightarrow \infty}\Pr\left( \frac{\sqrt{N}|(z_n-z_0)_j|}{r_{Nj}}\le \sqrt{\chi^2_1(\alpha)}  \right)  \nonumber \\
=&&\sum\limits_{i=1}^{k} \Pr\left( \Big|\frac{(Y^i)_j}{r^i_j}\Big| \le \sqrt{\chi^2_1(\alpha)} \mbox{  and  } Y^i \in K_i \right) \label{q:inold}
\end{eqnarray}
Moreover, suppose for a given \(j=1,\dots,n\) that the equality
\[
\Pr\left( \Big|\frac{(Y^i)_j}{r^i_j}\Big| \le \sqrt{\chi^2_1(\alpha)} \mbox{  and  } Y^i \in K_i \right)=\Pr\left( \Big|\frac{(Y^i)_j}{r^i_j}\Big| \le \sqrt{\chi^2_1(\alpha)}\right)\Pr\left( Y^i \in K_i \right)
\]
holds for each \(i=1,\dots,k\). Then for each real number \( \alpha \in (0,1)\),
\[
\lim\limits_{N \rightarrow \infty} \Pr \left( |(z_N-z_0)_j| \le \frac{\sqrt{\chi^2_1(\alpha)} r_{Nj}}{\sqrt{N}} \right)=1-\alpha.
\]
\end{theorem}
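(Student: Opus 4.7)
The plan is to exploit the piecewise-linear structure of $L_{K_0} = d(f_0)_S(z_0)$ to decompose the asymptotic distribution $(L_{K_0})^{-1}(Y_0)$ according to which cone of $\Gamma'(z_0) = \{K_1,\ldots,K_k\}$ it lies in, and match this decomposition to corresponding events for the sample quantities. Since $L_{K_0}$ is a homeomorphism (Assumption \ref{assu2}), the images $\{M_i K_i\}_{i=1}^k$ form a subdivision of $\Rset^n$ and $(L_{K_0})^{-1}$ equals $M_i^{-1}$ on $M_i K_i$. Assumption \ref{assu3} makes $Y_0$ nondegenerate Gaussian, so $\Pr(Y_0 \in \partial(M_j K_j))=0$ for each $j$. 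Setting $Z^{*} := (L_{K_0})^{-1}(Y_0)$, the events $\{Z^{*} \in \ri K_i\} = \{Y^i \in \ri K_i\}$ then partition the probability space a.s., and $Z^{*} = Y^i$ on $\{Z^{*} \in \ri K_i\}$.

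Next I would relate $\sqrt{N}(z_N - z_0)$ to this partition. Each $K_i \in \Gamma'(z_0)$ is the tangent cone at $z_0$ of an $n$-cell $C_{F_i}$ of the normal manifold of $S$, so there is a neighborhood $U$ of $z_0$ on which $z \in C_{F_i}$ iff $z - z_0 \in K_i$. On the event that $z_N \in \ri C_{F_i}$ (which, by \cite[Proposition 3.5]{lu:crc}, is the typical case asymptotically), $d\Pi_S(z_N) = \Pi_{K_i}$ and hence
\[
d(f_N)_S(z_N) = df_N(\Pi_S(z_N))\,\Pi_{K_i} + (I - \Pi_{K_i}).
\]
Since $z_N \to z_0$ and $df_N \to df_0$ in $C^1(X,\Rset^n)$ a.s.\ (Assumption \ref{assu1}), this matrix $M_N$ converges to $M_i = L\Pi_{K_i} + (I-\Pi_{K_i})$ on this event. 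Combined with the a.s.\ convergence $\Sigma_N \to \Sigma_0$, this yields $r_{Nj} \to r^i_j$ there.

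To combine these, I would invoke the Skorohod representation on $\sqrt{N}(z_N - z_0) \Rightarrow Z^{*}$ from Theorem \ref{t:asy_dis}, working on a space where $W_N := \sqrt{N}(z_N - z_0) \to Z^{*}$ a.s. On $\{Z^{*} \in \ri K_i\}$, eventually $W_N \in \ri K_i$, which (since $W_N/\sqrt{N}$ becomes arbitrarily small) is eventually equivalent to $z_N \in \ri C_{F_i}$; hence $M_N \to M_i$, $r_{Nj} \to r^i_j$, and $\sqrt{N}(z_N - z_0)_j/r_{Nj} \to (Y^i)_j/r^i_j$. A bounded-convergence argument then gives
\[
\Pr\!\left( z_N \in \ri C_{F_i},\; \frac{\sqrt{N}|(z_N-z_0)_j|}{r_{Nj}} \le \sqrt{\chi^2_1(\alpha)} \right) \to \Pr\!\left( Y^i \in K_i,\; \frac{|(Y^i)_j|}{r^i_j} \le \sqrt{\chi^2_1(\alpha)} \right).
\]
Summing over $i$, together with the fact that the event $\{z_N \notin \bigcup_i \ri C_{F_i}\}$ has vanishing probability (again by \cite[Proposition 3.5]{lu:crc}), yields the first claim.

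For the second claim, under the factorization hypothesis the sum collapses to $(1-\alpha)\sum_i \Pr(Y^i \in K_i)$, since $(Y^i)_j \sim \mathcal{N}(0,(r^i_j)^2)$ gives $\Pr(|(Y^i)_j|/r^i_j \le \sqrt{\chi^2_1(\alpha)}) = 1-\alpha$. Finally $\sum_i \Pr(Y^i \in K_i) = \sum_i \Pr(Y_0 \in M_i K_i) = 1$, because the images $\{M_i K_i\}$ partition $\Rset^n$ up to null sets. The main obstacle will be step three: one must carefully track, through the Skorohod coupling, that the linear piece realized by $d(f_N)_S(z_N)$ on the event $\{W_N \in \ri K_i\}$ is precisely $M_i$ and not some other selection function, and that boundary effects from $\partial K_i$ contribute nothing in the limit.
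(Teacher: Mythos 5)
The paper does not actually prove Theorem~\ref{t:indold}; it is quoted verbatim from \cite[Theorem 5.1]{lu:crc}, so there is no in-paper proof to compare against line by line. That said, your argument is the natural one and is essentially the same cone-by-cone decomposition that the paper uses for its own Theorems~\ref{t:meth2} and~\ref{t:icim}: split according to which cone $K_i$ of $\Gamma'(z_0)$ contains $\sqrt{N}(z_N-z_0)$, use \cite[Proposition 3.5]{lu:crc} to discard the event that $z_N$ lies outside $\bigcup_i \mbox{int}\,P_i$, identify the limit on each piece with $Y^i=M_i^{-1}Y_0$ restricted to $K_i$, and sum. Your handling of the second claim (factorization plus $\sum_i\Pr(Y_0\in M_iK_i)=1$ because $\{M_iK_i\}$ is a conical subdivision) is correct. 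The one place where the paper's machinery differs is the convergence-of-ratios step: rather than a Skorohod coupling, the proofs of Theorems~\ref{t:meth2} and~\ref{t:icim} introduce auxiliary random variables ($v^{i,j}_N$, $\hat Y^{i,j}$, $\hat\eta^{i,j}_N$) that patch the off-event behaviour with a fixed vector $\bar v^{i,j}$ chosen to contribute nothing in the limit, and then apply the continuous mapping theorem to the ratio. Your Skorohod route is workable because the random quantities entering $r_{Nj}$ on the event $\{z_N\in\mbox{int}\,P_i\}$, namely $df_N(x_N)$ and $\Sigma_N$, converge almost surely to the constants $L$ and $\Sigma_0$, so joint weak convergence with $\sqrt{N}(z_N-z_0)$ is automatic; but you should say this explicitly, since $r_{Nj}$ is not a function of $\sqrt{N}(z_N-z_0)$ alone.

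One genuine error of notation that you should fix: on the event $z_N\in\mbox{int}\,P_i$ near $z_0$, the B-derivative $d\Pi_S(z_N)$ is \emph{not} $\Pi_{K_i}$, the Euclidean projector onto the $n$-dimensional cone $K_i=\cone(P_i-z_0)$ (that map is nonlinear and is the identity on $K_i$). By (\ref{q:dSnK}) it equals $\Pi_{K(z_N)}$, the orthogonal projector onto the critical cone at $z_N$, which for $z_N$ in the interior of an $n$-cell is a linear subspace; this linear map coincides with the selection function $Q_i=\Pi_{K_0}|_{K_i}$ of $d\Pi_S(z_0)$ on $K_i$. With $Q_i$ in place of your $\Pi_{K_i}$, the identities $d(f_N)_S(z_N)=df_N(x_N)Q_i+I-Q_i\to LQ_i+I-Q_i=M_i$ and hence $r_{Nj}\to r^i_j$ on that event are correct, and the rest of your argument goes through.
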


We see in (\ref{q:inold}) that this method of constructing individual confidence intervals, while easily computable using only the sample data, produces intervals whose asymptotic level of confidence is dependant on the true solution, unless the condition below (\ref{q:inold}) is satisfied. The latter condition is satisfies, when \(d(f_0)_S(z_0)\) is a linear function or has only two selection functions, in which case the intervals computed from this method will be asymptotically exact. In general, however, the level of confidence for such intervals cannot be guaranteed. This limitation motivates the development of methods proposed in the following section.

\section{New methods for building individual confidence intervals}\label{s:theory}

In this section we present three new methods for building individual confidence intervals. The first two methods produce intervals for $(z_0)_j$, that have a specified level of confidence for situations more general than the method examined in Theorem \ref{t:indold}. Those two methods rely on the estimate \(\Phi_N(z_N)\); when \(\Phi_N(z_N)\) is a linear function, they return the same interval as the method examined in Theorem \ref{t:indold}. The methods differ when \(\Phi_N(z_N)\) is piecewise linear. The first method (given in Theorem \ref{t:mainres}) uses all selection functions of \(\Phi_N(z_N)\) to calculate an interval. The second (given in Theorem \ref{t:meth2}) uses \(z_N\) to determine a subset of selection functions to be used in an interval's computation. When the set \(S\) is a box these intervals can be projected onto \(S\) to produce intervals that cover \((x_0)_j\) at a rate at least as large as the coverage rate of \((z_0)_j\) by the initial intervals.

The third method (given in Theorem \ref{t:icim}) considers the computation of individual confidence intervals for $x_0$ directly. This method estimates the function that appears in the right-hand of (\ref{q:xN_dist_cov}) by using both the  function \(\Lambda_N\) as defined in (\ref{q:bdivest}) and the function \(\Phi_N(z_N)\). Initially these two functions are considered separately, and the relation between $x_N$ and $z_N$ is used to emulate the approach of the second method. When calculating an interval's length, with high probability one only need to consider a single selection function of the estimate constructed from \(\Lambda_N\) and \(\Phi_N(z_N)\).

\subsection{The first method (an indirect approach)}\label{s:nmicim1}

In this method, we compute confidence intervals for $(z_0)_j$, for each $j=1,\cdots,n$, based on equation (\ref{q:result}) in Theorem \ref{t:mainres}. In that equation, $r$ is an arbitrarily chosen real number, and $a^r(\Phi_N^{-1}(z_N)\Sigma_N^{1/ 2} )_j)$ returns a number determined by the $j$th component of the function $\Phi_N^{-1}(z_N)\Sigma_N^{1/ 2}$. In the following, we start with the definition of $a^r(\cdot)$.

Let \( \psi : \mathbb{R}^n \to \mathbb{R} \) be a continuous function, and \(Z \sim \mathcal{N}(0,I_{n})\). Suppose that \(\allowbreak \Pr\left(\psi(Z) = b  \right) =0 \) for all \(b\) and \(\Pr\left( \beta_1 < \psi(Z)< \beta_2 \right) >0 \) for all \( \beta_1 < \beta_2\). Then given any \( \alpha \in (0,1) \) and \(  r \in \mathbb{R}\) there exists a unique point \(a^r(\psi) \in (0,\infty) \) such that
\[
\Pr\left(-a^r(\psi) \le \psi(Z)-r \le a^r(\psi)   \right) =1- \alpha.
\]
Let  \( \alpha \in (0,1) \) be fixed. For any function \(f:\mathbb{R}^n \to \mathbb{R} \), define
\begin{equation}\label{q:alphdef}
a^r(f)=\inf\{ l \ge 0 | \Pr\left(-l \le f(Z)-r \le  l \right) \ge 1- \alpha \}.
\end{equation}
It then follows that
 \begin{enumerate}
 \item \(a^r(f) < \infty \).
 \item\(\Pr\left(-a^r(f) \le f(Z)-r \le a^r(f)\right) \ge 1- \alpha\).
 \item \(\Pr\left(-(a^r(f)-\delta) \le f(Z)-r \le a^r(f)-\delta\right ) < 1- \alpha \) for all \( \delta >0\).
\end{enumerate}

In the proof of Theorem \ref{t:mainres} we use the following two lemmas.
\begin{lemma}\label{t:lem1}
Let \(\psi \) be as above and \(\left\{\psi_N\right\}_{N=1}^{\infty}\) be a sequence of functions that converges pointwise to  \(\psi \). Then for any \( r \in \mathbb{R} \), \(\lim_{N\rightarrow \infty} a^r(\psi_N) = a^r(\psi)  \).
\end{lemma}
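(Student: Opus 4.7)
The plan is to translate the hypothesized pointwise convergence of the $\psi_N$ into convergence of the associated two-sided coverage functions, and then invert. Define
\[
G_N(l) := \Pr\bigl(|\psi_N(Z)-r|\le l\bigr) \quad \text{and} \quad G(l) := \Pr\bigl(|\psi(Z)-r|\le l\bigr),
\]
both non-decreasing in $l\ge 0$. By (\ref{q:alphdef}) we have $a^r(\psi_N)=\inf\{l\ge 0:G_N(l)\ge 1-\alpha\}$ and analogously for $a^r(\psi)$, so the lemma reduces to showing that the generalized inverses of $G_N$ at level $1-\alpha$ converge to that of $G$.

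First I would establish two structural properties of $G$. Continuity of $G$ on $[0,\infty)$ follows from $\Pr(\psi(Z)=b)=0$ for every $b$, since then $\Pr(|\psi(Z)-r|=l)=\Pr(\psi(Z)=r+l)+\Pr(\psi(Z)=r-l)=0$. Strict monotonicity of $G$ on $[0,\infty)$ follows from the second hypothesis, applied e.g.\ to the open interval $(r+l_1,r+l_2)$ for $0\le l_1<l_2$. Combined with $G(0)=0<1-\alpha$ and $G(l)\to 1$, these facts imply that $\{l:G(l)\ge 1-\alpha\}=[a^r(\psi),\infty)$ for a unique $a^r(\psi)>0$, and that
\[
G(a^r(\psi)-\epsilon) \;<\; 1-\alpha \;<\; G(a^r(\psi)+\epsilon)
\]
for every sufficiently small $\epsilon>0$.

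Next I would transfer pointwise convergence of $\psi_N$ to pointwise convergence $G_N(l)\to G(l)$ for every $l\ge 0$. Since $Z\sim\mathcal{N}(0,I_n)$ is a fixed random vector and $\psi_N(z)\to\psi(z)$ for every $z\in\mathbb{R}^n$, the random variables $|\psi_N(Z)-r|$ converge almost surely to $|\psi(Z)-r|$, hence in distribution. Because $G$ is continuous at every $l$, the Portmanteau theorem (equivalently, dominated convergence applied to the indicators $\mathbf{1}\{|\psi_N(Z)-r|\le l\}$, which are bounded by $1$ and converge a.s.\ on the complement of the null set $\{|\psi(Z)-r|=l\}$) yields $G_N(l)\to G(l)$ for every $l\ge 0$.

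Finally I would squeeze $a^r(\psi_N)$ between $a^r(\psi)-\epsilon$ and $a^r(\psi)+\epsilon$. Pick $\epsilon\in(0,a^r(\psi))$. Since $G(a^r(\psi)+\epsilon)>1-\alpha$, the pointwise convergence gives $G_N(a^r(\psi)+\epsilon)>1-\alpha$ for all sufficiently large $N$, which by definition forces $a^r(\psi_N)\le a^r(\psi)+\epsilon$. Similarly $G_N(a^r(\psi)-\epsilon)<1-\alpha$ eventually; combined with the fact that $G_N$ is non-decreasing, this forces $a^r(\psi_N)\ge a^r(\psi)-\epsilon$. Letting $\epsilon\downarrow 0$ yields $a^r(\psi_N)\to a^r(\psi)$. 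No step is a serious obstacle; the only delicate point is verifying that both continuity and strict monotonicity of $G$ are genuinely supplied by the two hypotheses on $\psi$, since without either property one cannot both bracket $1-\alpha$ on the correct side and identify the infimum uniquely.
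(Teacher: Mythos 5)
Your proof is correct, and it takes a genuinely different route from the paper's. The paper argues by subsequential compactness: it first shows \(\sup_N a^r(\psi_N)<\infty\) via tightness of \(\{\psi_N(Z)\}\), extracts a subsequence with \(a^r(\psi_N)\to a^\ast\), rules out \(a^\ast=0\), and then sandwiches to conclude \(\Pr(-a^\ast\le \psi(Z)-r\le a^\ast)=1-\alpha\), which identifies \(a^\ast=a^r(\psi)\) by uniqueness. You instead work directly with the distribution functions \(G_N\) and \(G\) of \(|\psi_N(Z)-r|\) and \(|\psi(Z)-r|\): the two hypotheses on \(\psi\) give continuity and strict monotonicity of \(G\) (so the level set \(\{l:G(l)\ge 1-\alpha\}\) is exactly \([a^r(\psi),\infty)\)), sure convergence of \(\psi_N(Z)\) to \(\psi(Z)\) plus the atomlessness gives \(G_N(l)\to G(l)\) for every \(l\), and the squeeze then inverts the convergence at level \(1-\alpha\). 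This is the standard quantile-convergence argument; it is arguably more transparent, avoids the subsequence extraction, and makes explicit exactly where each of the two hypotheses on \(\psi\) is used (continuity of \(G\) for the one-sided bound from below, strict monotonicity for the bound from above). What the paper's formulation buys is a template that is reused nearly verbatim for the later convergence claims (Lemmas \ref{t:m2dc} and \ref{t:iciconv}), where the relevant quantities are conditional probabilities over cones rather than plain distribution functions; your argument would adapt there as well, but would need the monotone functions redefined cone by cone. One small point worth making explicit in your write-up: the definition (\ref{q:alphdef}) is an infimum, so the upper half of your squeeze needs \(G_N(a^r(\psi)+\epsilon)\ge 1-\alpha\) (which you get, since the limit is strictly above \(1-\alpha\)), and the lower half needs monotonicity of \(G_N\) to push the bound from the single point \(a^r(\psi)-\epsilon\) to the whole interval below it --- you note both, so no gap remains.
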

\begin{proof}
Note \( \sup_{N} a^r(\psi_N) < \infty\). This follows from the fact that \(\psi_N(Z)\) converges to \( \psi(Z) \) a.s. and so \( \left\{ \psi_N(Z)\right\}_{N=0}^{\infty} \) is tight. Next fix a subsequence, again indexed by \(N\), along which \( a^r(\psi_N) \to a^{\ast} \). It suffices to show \( a^{\ast} = a^r(\psi) \).

Note that \( a^{\ast} \ne 0\). If this were the case then for every \( \epsilon > 0 \)
\[
1-\alpha \le \lim\limits_{N\rightarrow \infty} \Pr \left( -\epsilon \le \psi_N(Z)-r \le \epsilon \right) = \Pr\left( -\epsilon \le \psi(Z)-r \le \epsilon \right) .
\]
Since \( \epsilon \) is arbitrary this would imply \( \Pr \left( \psi(Z) = r \right) \ge 1-\alpha \), a contradiction.

Assume now without loss of generality that \( \inf_{N} a^r(\psi_N) > 0 \). Then
\begin{equation}\label{q:lem1p1}
1-\alpha \le \lim\limits_{N\rightarrow \infty} \Pr\left( -1 \le  \frac{\psi_N(Z)-r}{a^r(\psi_N)}  \le 1 \right) =\Pr\left( -1 \le  \frac{\psi(Z)-r}{a^{\ast}}  \le 1 \right).
\end{equation}
Applying the same argument for all \( 0<\delta < \inf_N a^r(\psi_N) \) we see that
\[
\Pr \left( -1 \le \frac{\psi(Z)-r}{(a^{\ast}-\delta)} \le 1 \right) \le 1-\alpha .
\]
Sending \( \delta\) to \( 0 \) we obtain \( \Pr \left( -a^{\ast} \le \psi(Z)-r \le a^{\ast} \right) \le 1-\alpha \), which combined with (\ref{q:lem1p1}) gives
\[
\Pr \left( -a^{\ast}\le \psi(Z)-r \le a^{\ast} \right)=1-\alpha.
\]
Thus \( a^{\ast}= a^r(\psi)\), and \(\lim_{N\rightarrow \infty} a^r(\psi_N) = a^r(\psi)  \).
\end{proof} \qed

Let \(C(\mathbb{R}^n,\mathbb{R})\) denote the space of continuous functions from \(\mathbb{R}^n\) to \(\mathbb{R}\). Equipped with the local uniform topology, this is a Polish space.
\begin{lemma}\label{t:lem2}
Let \( \left\{ \psi_N \right\}_{N=1}^{\infty}\) be a sequence of \(C(\mathbb{R}^n,\mathbb{R})\) valued random variables which converges in distribution to  \(\psi \). Also let \(\left\{Z_N\right\}_{N=1}^{\infty}\) be a sequence of \(\mathbb{R}^n\) valued random variables converging in distribution to \( Z \). Then for any $r\in \Rset$,
 \[
\Pr \left( -a^r(\psi_N) \le \psi_N(Z_N)-r \le a^r(\psi_N) \right) \to 1-\alpha.
\]
\end{lemma}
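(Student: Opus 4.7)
The plan is to reduce the problem to a joint weak convergence statement and then invoke the Portmanteau theorem. A key structural observation is that since $\psi$ is a fixed (deterministic) element of $C(\mathbb{R}^n,\mathbb{R})$, the hypothesis $\psi_N \Rightarrow \psi$ in the Polish space $C(\mathbb{R}^n,\mathbb{R})$ automatically upgrades to convergence in probability with respect to the local uniform metric. This lets me splice together the two convergence hypotheses into a joint statement via Slutsky-type arguments even though $\psi_N$ and $Z_N$ may be dependent.

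First I would show that $a^r(\psi_N) \to a^r(\psi)$ in probability. Fix an arbitrary subsequence and extract a further subsequence along which $\psi_N \to \psi$ almost surely in the local uniform topology, which is possible because convergence in probability on a metric space always admits a.s.-convergent subsequences. Local uniform convergence implies pointwise convergence, so Lemma \ref{t:lem1} applies pathwise and yields $a^r(\psi_N) \to a^r(\psi)$ almost surely along that further subsequence. Since the initial subsequence was arbitrary, the subsequence principle upgrades this to $a^r(\psi_N) \to a^r(\psi)$ in probability along the whole sequence.

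Next I would establish the joint weak convergence $(\psi_N(Z_N),\, a^r(\psi_N)) \Rightarrow (\psi(Z),\, a^r(\psi))$. Because $\psi$ is deterministic, the two marginal weak convergences combine (via Slutsky) into $(\psi_N, Z_N) \Rightarrow (\psi, Z)$ in $C(\mathbb{R}^n,\mathbb{R}) \times \mathbb{R}^n$. The evaluation map $(f,z) \mapsto f(z)$ is continuous from this product space (with local uniform times Euclidean topology) into $\mathbb{R}$, so the continuous mapping theorem yields $\psi_N(Z_N) \Rightarrow \psi(Z)$. Pairing this with the first step (again via Slutsky, since $a^r(\psi)$ is a constant) delivers the claimed joint convergence.

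Finally, set $U_N := \psi_N(Z_N) - r$ and $V_N := a^r(\psi_N)$, and let $A := \{(u,v) \in \mathbb{R}^2 : |u| \le v\}$. The set $A$ is closed with topological boundary contained in $\{(u,v) : |u| = v\}$. Under the limit distribution of $(U_N, V_N)$, this boundary event is contained in $\{\psi(Z) = r + a^r(\psi)\} \cup \{\psi(Z) = r - a^r(\psi)\}$, which has probability zero by the standing hypothesis $\Pr(\psi(Z) = b) = 0$ for every $b$. The Portmanteau theorem then yields
\[
\Pr\bigl(-a^r(\psi_N) \le \psi_N(Z_N) - r \le a^r(\psi_N)\bigr) \longrightarrow \Pr\bigl(|\psi(Z) - r| \le a^r(\psi)\bigr) = 1-\alpha.
\]
The one delicate point — and the place where I expect to need to be most careful — is confirming that this boundary event genuinely has zero mass under the limit law, since without it Portmanteau would produce only a one-sided inequality rather than the required equality; this is exactly where the atomlessness assumption on $\psi(Z)$ is essential.
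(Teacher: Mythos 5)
Your proof is correct and follows essentially the same route as the paper: both arguments rest on Lemma \ref{t:lem1} to obtain $a^r(\psi_N)\to a^r(\psi)$ in probability and on the continuity of the evaluation map to obtain $\psi_N(Z_N)\Rightarrow\psi(Z)$. The only difference is in the final splice: the paper divides by $a^r(\psi_N)$ on the event $\{a^r(\psi_N)>0\}$ and applies weak convergence to the normalized ratio, whereas you keep the pair $\left(\psi_N(Z_N)-r,\,a^r(\psi_N)\right)$ and apply the Portmanteau theorem to the closed cone $\{(u,v):|u|\le v\}$ --- a slightly cleaner finish that also makes explicit the boundary-null condition (coming from $\Pr(\psi(Z)=b)=0$ for all $b$) that the paper uses tacitly.
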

\begin{proof}
By Lemma \ref{t:lem1} and the convergence of \(\psi_N\) to \(\psi\), it follows that \( a^r(\psi_N) \to a^r(\psi) \) in probability. Also since \( a^r(\psi) > 0\),
 \[
\frac{1}{a^r(\psi_N)} \mathds{1}_{a^r(\psi_N)>0} \to \frac{1}{a^r(\psi)}
\]
in probability, where \(\mathds{1}_{a^r(\psi_N)>0}\) is the indicator random variable for the event \({a^r(\psi_N)>0}\).  Let \( A_N\) denote the event that \(a^r(\psi_N) >0 \). Then
\begin{eqnarray}
\Pr\left( -a^r(\psi_N) \le \psi_N(Z_N)-r \le a^r(\psi_N)  \right) &&=\Pr \left( A_N; \; -1 \le \frac{\psi_N(Z_N)-r}{a^r(\psi_N)} \le 1  \right)\nonumber\\
&&+\Pr \left( A_N^c; \; -a^r(\psi_N) \le \psi_N(Z_N)-r \le a^r(\psi_N) \right). \nonumber
\end{eqnarray}
By \( a^r(\psi_N) \to a^r(\psi) \) in probability and \( a^r(\psi) >0\), it follows that \(\Pr \left(A_N \right) \to 1\). Therefore,
\[
\Pr \left(A_N^c;\; -a^r(\psi_N) \le \psi_N(Z_N)-r \le a^r(\psi_N) \right) \to 0 \mbox{ as } N\to \infty.
\]
Let \(B_N\) be the event that \(-1\le \frac{\psi_n(Z_N)-r}{a^r(\psi_N)}\mathds{1}_{a^r(\psi_N)>0} \le 1\). By the convergence of \(\psi_N\) to \(\psi \) and \(Z_N\) to \( Z \), we have that \(\psi_N(Z_N) \Rightarrow \psi(Z) \) in distribution, and thus
\[
\Pr \left( B_N \right) \to \Pr\left( -1 \le \frac{\psi(Z)-r}{a^r(\psi)} \le 1\right) = \Pr \left( -a^r(\psi) \le \psi(Z)-r \le a^r(\psi) \right)=1-\alpha.
\]
Consequently, \( \Pr\left( -a^r(\psi_N) \le \psi_N(Z_N)-r \le a^r(\psi_N)  \right)  \to 1-\alpha. \)
\end{proof} \qed

The application of these lemmas to our problem of interest is facilitated by the following two propositions.

\begin{proposition}\label{t:convprop}
(a) Let \( f:\Rset^n\to \Rset^n \) be a piecewise linear function and \( \{f_N\}_{N=1}^{\infty} \) a sequence of piecewise linear functions from $\Rset^n$ to $\Rset^n$ with
\begin{equation}\label{q:convprp1}
\sup_{h\in \mathbb{R}^n, h \ne 0} \frac{\|f_N(h)-f(h) \|}{\| h\| } \to 0.
\end{equation}
Suppose that there exists a conical subdivision \( \Gamma=\{\gamma_1,\gamma_2 \dots \gamma_m \} \) of \( \mathbb{R}^n \) such that for all \( N \) sufficiently large \( f_N|_{\gamma_i}=A_{N,i} \) and \( f|_{\gamma_i}=A_i \) are linear functions for each \( \gamma_i \). Then
\begin{equation}\label{q:convprp2}
\sup_{h\in \mathbb{R}^n, h \ne 0} \frac{\|A_{N,i}h-A_ih \|}{\| h\| } \to 0 \mbox{ for } i=1,\dots,m.
\end{equation}

(b) Suppose in addition that \(f\) is a homeomorphism. Then for all \(N \) sufficiently large \(f_N\) is a homeomorphism and \( f_N^{-1} \) converges uniformly on compacts to \(f^{-1} \).
\end{proposition}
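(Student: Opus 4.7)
For part (a), the key observation is that each $\gamma_i$ is $n$-dimensional and hence has nonempty interior, so I can pick vectors $v_1^{(i)},\dots,v_n^{(i)}$ in the interior of $\gamma_i$ that form a basis of $\Rset^n$. For $N$ large enough that $f_N|_{\gamma_i}=A_{N,i}$, each basis vector satisfies $A_{N,i}v_k^{(i)}=f_N(v_k^{(i)})$ and $A_iv_k^{(i)}=f(v_k^{(i)})$, so hypothesis (\ref{q:convprp1}) yields $\|(A_{N,i}-A_i)v_k^{(i)}\|\to 0$. Since convergence of a sequence of linear maps on a fixed basis of $\Rset^n$ is equivalent to convergence in operator norm (via the change-of-basis matrix, whose norm is an absolute constant independent of $N$), we obtain (\ref{q:convprp2}).

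For part (b), the first step is to use part (a) together with the openness of the set of invertible matrices: since each $A_i$ is invertible (because $f$ is a homeomorphism and $\gamma_i$ is $n$-dimensional, so $A_i\gamma_i$ has nonempty interior), we conclude that $A_{N,i}$ is invertible and $A_{N,i}^{-1}\to A_i^{-1}$ in operator norm for all $N$ sufficiently large, uniformly in $i$. The second and main step is to argue that $f_N$ is itself a global homeomorphism for large $N$. For this I would invoke the structural characterization of piecewise linear homeomorphisms (see Scholtes \cite{sch:ipdf}, Sections 2.3--2.4): a piecewise linear map with conical subdivision $\{\gamma_i\}$ and invertible pieces is a homeomorphism iff the image cones $\{A_i\gamma_i\}$ themselves form a conical subdivision of $\Rset^n$ (equivalently, the map is coherently oriented). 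Since the $\gamma_i$ are fixed and $A_{N,i}\to A_i$ in operator norm, the image cones $A_{N,i}\gamma_i$ converge in Hausdorff distance on the unit sphere to $A_i\gamma_i$, and $\det A_{N,i}$ has the same sign as $\det A_i$; both the coherent-orientation condition and the subdivision condition are open and therefore persist for $N$ large.

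For the uniform convergence of $f_N^{-1}$ to $f^{-1}$ on compact sets, I would argue by contradiction. Fix a compact $K\subset\Rset^n$ and suppose there exist $\epsilon>0$ and $y_N\in K$ with $\|f_N^{-1}(y_N)-f^{-1}(y_N)\|>\epsilon$. The uniform bound on $\|A_{N,i}^{-1}\|$ over $i$ and $N$ large gives $\|f_N^{-1}(y)\|\le C\|y\|$ for all $y\in K$ and all large $N$, so $\{f_N^{-1}(y_N)\}$ is bounded. Extract a subsequence along which $y_N\to y^*\in K$ and $f_N^{-1}(y_N)\to x^*$. By (\ref{q:convprp1}) together with piecewise linearity, $f_N\to f$ uniformly on compact sets, so $y_N=f_N(f_N^{-1}(y_N))\to f(x^*)$, giving $f(x^*)=y^*$, i.e.\ $x^*=f^{-1}(y^*)$. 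Continuity of $f^{-1}$ yields $f^{-1}(y_N)\to f^{-1}(y^*)=x^*$ as well, contradicting $\|f_N^{-1}(y_N)-f^{-1}(y_N)\|>\epsilon$.

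The main obstacle is the middle step of part (b), namely justifying that $f_N$ is a global homeomorphism. Local invertibility on each $\gamma_i$ is immediate from part (a), but global injectivity/surjectivity requires the structural subdivision/orientation characterization of piecewise linear homeomorphisms and an argument that these properties are stable under small operator-norm perturbations of the linear pieces, once the conical subdivision is held fixed.
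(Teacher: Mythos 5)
Your part (a) is essentially the paper's argument: the paper also uses the $n$-dimensionality of each $\gamma_i$ (a ball inside $\gamma_i$ rather than a basis of interior vectors, but the point is identical) to upgrade convergence of $A_{N,i}h-A_ih$ on the cone to convergence of the matrices. Your final step of (b) (uniform convergence of $f_N^{-1}$ on compacts via uniform Lipschitz bounds, a convergent subsequence, and identification of the limit through $f_{N_k}(f_{N_k}^{-1}(x))=x$) is likewise the same subsequential argument the paper gives.

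The genuine issue is the middle step of (b), which you yourself flag as the main obstacle: your justification that $f_N$ is a global homeomorphism for large $N$ is not actually carried out. You invoke the structural characterization of piecewise linear homeomorphisms (coherent orientation plus the image cones forming a conical subdivision) and then assert that ``both the coherent-orientation condition and the subdivision condition are open and therefore persist for $N$ large.'' Openness of the sign condition on $\det A_{N,i}$ is clear, but openness of the subdivision condition is exactly the hard part and does not follow from Hausdorff convergence of the image cones on the unit sphere: two convex cones can each be arbitrarily close to two cones that meet in a common face while themselves overlapping in an $n$-dimensional set, so one would still have to argue (using the compatibility of the $A_{N,i}$ on shared faces and the orientation condition) that the perturbed images $A_{N,i}\gamma_i$ intersect pairwise only in common faces. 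As written, the step that global injectivity/surjectivity survives the perturbation is assumed rather than proved. The paper avoids this entirely with a one-line appeal to a Lipschitz perturbation result: since $f^{-1}$ is piecewise linear with selection functions $A_i^{-1}$, it is Lipschitz with constant $\delta=\max_i\|A_i^{-1}\|$, while $f_N-f$ is piecewise linear with selection functions $A_{N,i}-A_i$ and hence Lipschitz with constant $\rho_N=\max_i\|A_{N,i}-A_i\|\to 0$ by part (a); once $\rho_N<\delta^{-1}$, \cite[Lemma 3.1]{rob:ift} yields that $f_N$ is a homeomorphism. I recommend replacing your coherent-orientation argument with this perturbation lemma (or, if you keep your route, supplying a full proof that the subdivision property of the image cones is stable under small compatible perturbations of the pieces).
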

\begin{proof}
By (\ref{q:convprp1}), $\sup_{h\in \gamma_i, h \ne 0} \frac{\|A_{N,i}h-A_ih \|}{ \| h\| } $ converges to 0 as $N\to \infty$, for each $i=1,\dots,m$. As $\Gamma$ is a conical subdivision of $\mathbb{R}^n$, $\gamma_i$ is of dimension $n$ which means that it contains a ball in $\Rset^n$. The fact that $\|A_{N,i}h-A_ih \|$ converges to 0 for all $h$ in a ball implies that the matrix $A_{N,i}$ converges to $A_i$, giving (\ref{q:convprp2}).


To prove (b) first note that since \(f\) is a homeomorphism, \( A_i^{-1} \) is well defined for each \(i\) and \( \left\{ A_1^{-1}, A_2^{-1}, \dots, A_m^{-1} \right\} \) provides a family of selection functions for \(f^{-1}\) \cite[Proposition 2.3.2]{sch:ipdf}. Moreover we have that \(f^{-1} \) is Lipschitz continuous with Lipschitz constant
\[
\delta = \max_{1 \le i \le m} \left( \|A_i^{-1}\| \right) < \infty.
\]

Similarly for \(N\) sufficiently large the functions \(f_N-f\) will be piecewise linear with a family of selection functions given by \( \left\{ A_{N,1}-A_1, \dots , A_{N,m}-A_m \right\} \), and thus Lipschitz continuous with Lipschitz constant
\[
\rho_N = \max_{1 \le i \le m} \left( \|A_{N,i}-A_i\| \right)
\]
From part \((a)\) we have \(\lim_{N \to \infty}\|A_{N,i}-A_i\| =0\) for each \(i \), so for all \(N\) sufficiently large \( \rho_N < \delta^{-1}\). From \cite[Lemma 3.1]{rob:ift} it then follows that \(f_N\) is a homeomorphism for \(N\) sufficiently large.

To obtain \(f_N^{-1} \to f^{-1} \) uniformly on compacts, note first from \(\lim_{N \to \infty}A_{N,i}^{-1} =A_i^{-1}\) it follows that \( \{f_N^{-1} \}_{N=v}^{\infty} \) is uniformly Lipschitz continuous for \(v\) large enough. Then for any compact set \( C \) and any subsequence of \( f_N^{-1} \) there exists a further subsequence, \( f_{N_k}^{-1} \) that converges uniformly on \(C\) to some function \( g \). To prove part \( (b) \) it then suffices to show that \(g(x)=f^{-1}(x)\).

To see that this holds let \( x \in C , \alpha_k=f_{N_k}^{-1}(x), \) and \( \alpha=g(x)\). By \( \alpha_k \rightarrow \alpha \) and \( f_{N_k} \rightarrow f \) it follows that \( f_{N_k}(\alpha_k) \rightarrow f(\alpha) \). Also for each \(k\)
\[
f_{N_k}(\alpha_k)=f_{N_k}(f_{N_k}^{-1}(x))=x.
\]
Thus \(x=f(\alpha)=f(g(x)) \), or \(g(x)=f^{-1}(x)\), the desired result.
\end{proof} \qed

\begin{proposition}\label{t:sviprop}
Suppose that Assumptions \ref{assu2}, \ref{assu3} and \ref{assu4} hold, and for each \( N \in \mathbb{N} \) let \( \Phi_N(z_N) \) be as in (\ref{q:funcent}). Then \( \Phi_N^{-1}(z_N)\Sigma_N^{1/ 2} \) converges to \(d(f_0)_S^{-1}(z_0)\Sigma_0^{1/2} \) in probability, uniformly on compacts.
\end{proposition}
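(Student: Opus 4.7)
The plan is to reduce the claim to a direct application of Proposition \ref{t:convprop}(b), followed by a continuous-mapping argument to incorporate $\Sigma_N^{1/2}$. The two crucial inputs we already have from Theorem \ref{t:asy_est} are: (i) the uniform-in-$h$ convergence (\ref{q:phiconv}) of $\Phi_N(z_N)$ to $d(f_0)_S(z_0)$ in probability, which is exactly the hypothesis (\ref{q:convprp1}) of Proposition \ref{t:convprop}; and (ii) the statement (\ref{q:bdcon}) that $\Lambda_N(z_N) = d\Pi_S(z_0)$ with probability tending to $1$. Since $\Phi_N(z_N)(h) = df_N(\Pi_S(z_N))(\Lambda_N(z_N)(h)) + h - \Lambda_N(z_N)(h)$, on the event in (\ref{q:bdcon}) the map $\Phi_N(z_N)$ is piecewise linear with the same conical subdivision $\Gamma'(z_0)$ as $d(f_0)_S(z_0)$. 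Thus on a high-probability event the common-subdivision hypothesis of Proposition \ref{t:convprop} is met.

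Next, I would pass from the ``in probability'' statement to the deterministic setting of Proposition \ref{t:convprop} by a standard subsequence argument: it suffices to show that every subsequence of $\Phi_N^{-1}(z_N)\Sigma_N^{1/2}$ has a further subsequence that converges to $d(f_0)_S^{-1}(z_0)\Sigma_0^{1/2}$ uniformly on compacts, almost surely. Along such a further subsequence one can extract, using (\ref{q:phiconv}) and (\ref{q:bdcon}), an almost-sure scenario in which $\Phi_N(z_N) \to d(f_0)_S(z_0)$ uniformly in the sense of (\ref{q:convprp1}) and in which, for all sufficiently large $N$, $\Phi_N(z_N)$ is piecewise linear with conical subdivision $\Gamma'(z_0)$. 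Assumption \ref{assu2} then guarantees that $d(f_0)_S(z_0) = L_{K_0}$ is a homeomorphism, so Proposition \ref{t:convprop}(b) applies and delivers $\Phi_N^{-1}(z_N) \to d(f_0)_S^{-1}(z_0)$ uniformly on compacts.

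Finally, I would combine this with the convergence $\Sigma_N \to \Sigma_0$ almost surely (recorded in the paragraph following Assumption \ref{assu4}, and based on \cite[Lemma~3.6]{lu:crc}). Assumption \ref{assu3} yields $\det \Sigma_0 > 0$, so the matrix square root is continuous at $\Sigma_0$, giving $\Sigma_N^{1/2} \to \Sigma_0^{1/2}$ almost surely. Since the $\Sigma_N^{1/2}$ eventually lie in a bounded set of matrices, multiplication from the right by $\Sigma_N^{1/2}$ preserves uniform convergence on compacts, so $\Phi_N^{-1}(z_N)\Sigma_N^{1/2} \to d(f_0)_S^{-1}(z_0)\Sigma_0^{1/2}$ uniformly on compacts along the chosen subsequence. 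The subsequence characterization of convergence in probability then yields the desired conclusion.

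The main obstacle is the bridge between the probabilistic hypotheses supplied by Theorem \ref{t:asy_est} and the deterministic setup required by Proposition \ref{t:convprop}; in particular, the ``common conical subdivision'' hypothesis of Proposition \ref{t:convprop} is only available on an event of probability tending to one, so one must work along subsequences (or restrict to that event and invoke continuous-mapping-type arguments) to apply the proposition cleanly. Once that deterministic reduction is executed, the rest is continuity of matrix inversion on the set of homeomorphisms (already packaged in Proposition \ref{t:convprop}(b)) together with continuity of $\Sigma \mapsto \Sigma^{1/2}$ at the positive-definite matrix $\Sigma_0$.
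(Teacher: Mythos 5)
Your argument is correct, and its skeleton (reduce to Proposition~\ref{t:convprop} by a subsequential argument, then handle $\Sigma_N^{1/2}$ by continuity of the matrix square root at the positive-definite $\Sigma_0$) matches the paper's. The one place where you genuinely diverge is in how the ``common conical subdivision'' hypothesis of Proposition~\ref{t:convprop} is verified. You obtain it probabilistically: by (\ref{q:bdcon}) the event $\{\Lambda_N(z_N)=d\Pi_S(z_0)\}$ has probability tending to one, so along a further subsequence it holds almost surely for all large $N$, and on that event $\Phi_N(z_N)$ and $d(f_0)_S(z_0)$ share the subdivision $\Gamma'(z_0)$. The paper instead builds a single \emph{deterministic} conical subdivision $\Gamma$ — the common refinement of the subdivisions of all the functions $\Psi_1,\dots,\Psi_l$ attached to the cells of the normal manifold — on which \emph{every} possible realization of $\Lambda_N(z_N)$ (whichever cell $C_{i_0}$ is selected) and $d\Pi_S(z_0)$ are simultaneously linear. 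That construction makes the common-subdivision hypothesis hold pathwise for free, so the subsequence extraction only has to deliver the uniform convergence (\ref{q:convprp1}); your version needs the extra extraction for the indicator of the event in (\ref{q:bdcon}), which is harmless but adds a step. Your route has the mild advantage of not requiring the refinement construction; the paper's has the advantage of decoupling the subdivision issue from any probabilistic estimate. Both are valid, and your treatment of the right factor $\Sigma_N^{1/2}$ (boundedness of the images plus Lipschitz continuity of the piecewise linear inverse) is also sound.
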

\begin{proof}
As previously noted, when Assumption \ref{assu4} holds the conditions of Assumption \ref{assu1} are satisfied, and under Assumptions \ref{assu1} and \ref{assu2} \(\Sigma_N\) converges almost surely to \(\Sigma_0\).
Convergence of \(\Sigma_N\) to \(\Sigma_0\) and (\ref{q:phiconv}) imply that for all \( \epsilon > 0\)
\begin{equation}\label{q:funconp}
 \lim_{N \to \infty}\Pr\left( \sup_{h \in \mathbb{R}^n, h \ne 0} \frac{\| \Sigma_N^{-1/ 2}\Phi_N(z_N)(h)-\Sigma_0^{-1/2}d(f_0)_S(z_0)(h) \|   }{\| h \|} < \epsilon   \right)=1.
\end{equation}
By a standard subsequential argument we can assume without loss of generality that almost surely
\[
\sup_{h \in \mathbb{R}^n, h \ne 0} \frac{\| \Sigma_N^{-1/ 2}\Phi_N(z_N)(h)-\Sigma_0^{-1/2}d(f_0)_S(z_0)(h) \|   }{\| h \|} \rightarrow 0.
\]
In order to show almost sure convergence of \(\Phi_N^{-1}(z_N)\Sigma_N^{1/ 2}\) to \( d(f_0)_S^{-1}(z_0)\Sigma_0^{1/2} \)  we will apply Proposition \ref{t:convprop}. It suffices then that for a.e. \( \omega\), with \(f_N=\Phi_N^{-1}(z_N(\omega))\Sigma_N^{1/ 2}(\omega)\) and \(f=d(f_0)_S^{-1}(z_0)\Sigma_0^{1/2}\), conditions of Proposition \ref{t:convprop} are satisfied.

To this end recall the expressions for \(d(f_0)_S(z_0)\) given in (\ref{q:bdivtrue}), \(\Phi_N(z_N)\) given in (\ref{q:finest}) and \(\Lambda_N(z_N)\) given in (\ref{q:bdivest}). From these it is clear that the conditions in part \((a)\) of Proposition \ref{t:convprop} will be satisfied if we can find a conical subdivision \(\Gamma\) such that for every \(\gamma_i\in \Gamma\) and \(z \in \mathbb{R}^n\), \(d\Pi_S(z)|_{\gamma_i}\) is equal to a linear function.

Let \(C_1,\dots,C_l\) be all of the \(k\)-cells in the normal manifold of \(S\), \(k=0,1,\dots,n\). Then for every \(z\in \mathbb{R}^n\), \(z\in \ri C_j\) for some \(j\), and \(d\Pi_S(z)(\cdot) = \Psi_j(\cdot)\) for \(\Psi_j\) defined as in (\ref{q:cellbdiv}). The desired subdivision \(\Gamma\) can be constructed by taking the collection of all cones with non-empty interior of the form \(\gamma=\cap_{k=1}^l \gamma_{k}\) where each \(\gamma_k\) is from a conical subdivision of \(\Psi_k\).

Finally by Assumptions \ref{assu2} and \ref{assu3}, \(\Sigma_0^{-1/2}d(f_0)_S(z_0)\) is a homeomorphism, satisfying the condition in part \((b)\) of Proposition \ref{t:convprop}. The result follows.
\end{proof} \qed

At this point we are able to present the main result for our first method on computation of asymptotically exact individual confidence intervals.
\begin{theorem}\label{t:mainres}
Suppose that Assumptions \ref{assu2}, \ref{assu3} and \ref{assu4} hold. Let \(\alpha \in (0,1)\), $r\in \Rset$, and let \( a^r( \cdotp ) \) be as defined in (\ref{q:alphdef}). Then for every \(j=1,\dots,n\),
\begin{equation}\label{q:result}
\lim\limits_{N\rightarrow \infty}\Pr\left( \big|\sqrt{ N}(z_N-z_0)_j-r\big|   \le a^r\left( ( \Phi_N^{-1}(z_N)\Sigma_N^{1/ 2} )_j \right) \right) = 1-\alpha.
\end{equation}
\end{theorem}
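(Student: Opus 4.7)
The plan is to recast the probability in (\ref{q:result}) into a form governed by Lemma \ref{t:lem2}. Introduce the deterministic $j$th-component function $\psi := (d(f_0)_S^{-1}(z_0)\Sigma_0^{1/2})_j$ and, for each $N$ such that $\Phi_N(z_N)$ is invertible, set $\psi_N := (\Phi_N^{-1}(z_N)\Sigma_N^{1/2})_j$ (and $\psi_N \equiv 0$ otherwise). By Proposition \ref{t:sviprop}, $\Phi_N^{-1}(z_N)\Sigma_N^{1/2}$ converges to $d(f_0)_S^{-1}(z_0)\Sigma_0^{1/2}$ in probability uniformly on compacts, so $\psi_N \to \psi$ in the same sense. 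Theorem \ref{t:asy_est} furnishes $Z_N := \sqrt{N}\Sigma_N^{-1/2}\Phi_N(z_N)(z_N - z_0) \Rightarrow Z \sim \mathcal{N}(0,I_n)$. On the event that $\Phi_N(z_N)$ is invertible---an event whose probability tends to one, since Proposition \ref{t:sviprop} allows one to apply Proposition \ref{t:convprop}(b) along almost surely convergent subsequences---we have the algebraic identity $\sqrt{N}(z_N - z_0)_j = \psi_N(Z_N)$.

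Before invoking Lemma \ref{t:lem2}, I would verify that $\psi$ satisfies the hypotheses that render $a^r(\psi)$ well-defined and positive. Continuity of $\psi$ is immediate, since it is a component of a piecewise linear map. For the no-atom property, write $\psi(Z) = ((L_{K_0})^{-1}Y_0)_j$ with $Y_0 \sim \mathcal{N}(0,\Sigma_0)$ nondegenerate by Assumption \ref{assu3}; on each full-dimensional cone of the conical subdivision associated with $(L_{K_0})^{-1}$, the relevant selection matrix $M_i^{-1}$ is invertible and hence has a nonzero $j$th row, so the distribution of $(M_i^{-1}Y_0)_j$ restricted to that cone is a nondegenerate Gaussian and admits no atoms. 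Summing over the finitely many cones yields $\Pr(\psi(Z) = b) = 0$ for all $b \in \Rset$. For the positivity condition $\Pr(\beta_1 < \psi(Z) < \beta_2) > 0$, observe that $L_{K_0}$ is a homeomorphism of $\Rset^n$, so the image $L_{K_0}(\{x \in \Rset^n : \beta_1 < x_j < \beta_2\})$ is a nonempty open subset of $\Rset^n$, which has positive measure under the nondegenerate Gaussian law of $Y_0$.

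With these preliminaries in place, Lemma \ref{t:lem2} applied to $\psi_N$, $Z_N$, $\psi$, $Z$ yields
\[
\Pr\bigl(-a^r(\psi_N) \le \psi_N(Z_N) - r \le a^r(\psi_N)\bigr) \to 1-\alpha.
\]
Because $\sqrt{N}(z_N - z_0)_j = \psi_N(Z_N)$ on an event of probability approaching one, this is precisely the conclusion (\ref{q:result}).

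The main obstacle I anticipate is the verification that $\psi$ satisfies the non-degeneracy conditions needed for $a^r(\psi)$ to be well-defined. While the argument above is fairly direct once the piecewise-linear structure of $(L_{K_0})^{-1}$ is exploited, some care is needed because a priori the image of a nondegenerate Gaussian under a merely piecewise linear homeomorphism could have atoms if one of the selection matrices had a degenerate $j$th row; ruling that out uses crucially that every selection matrix is invertible, being the inverse of a selection of the homeomorphism $L_{K_0}$ itself. The rest of the proof is an assembly of the convergence results already established in Proposition \ref{t:sviprop}, Theorem \ref{t:asy_est}, and Lemma \ref{t:lem2}.
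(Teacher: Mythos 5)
Your proposal is correct and follows essentially the same route as the paper's proof: identify $\psi_N=(\Phi_N^{-1}(z_N)\Sigma_N^{1/2})_j$ and $\psi=(d(f_0)_S^{-1}(z_0)\Sigma_0^{1/2})_j$, invoke Proposition \ref{t:sviprop} for the convergence of $\psi_N$ to $\psi$, take $Z_N=\sqrt{N}\Sigma_N^{-1/2}\Phi_N(z_N)(z_N-z_0)\Rightarrow Z$ from Theorem \ref{t:asy_est}, use positive homogeneity to rewrite $\sqrt{N}(z_N-z_0)_j=\psi_N(Z_N)$, and conclude via Lemma \ref{t:lem2}. Your explicit verification of the no-atom and positivity hypotheses on $\psi$ (via invertibility of each selection matrix and the open-mapping property of the homeomorphism) fills in a step the paper merely asserts, and is a welcome addition rather than a deviation.
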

\begin{proof}
By Proposition \ref{t:sviprop}, \( (\Phi_N^{-1}(z_N)\Sigma_N^{1/ 2})_j \) converges to \((L_K^{-1}\Sigma_0^{1/2})_j \) in \(C(\mathbb{R}^n,\mathbb{R})\), in probability. Since \(L_K^{-1}\Sigma_0^{1/2}\) is a piecewise linear homeomorphism it follows that for  \(Z \sim N(0,I_{n})\) and each \(j=1,\dots,n\),
\[
\Pr\left( (L_K^{-1}\Sigma_0^{1/2})_j(Z) = b  \right) =0 \mbox{ for all } b
\]
and
\[
\Pr\left( \beta_1 < (L_K^{-1}\Sigma_0^{1/2})_j(Z)< \beta_2 \right) >0 \mbox{ for all } \beta_1 < \beta_2.
\]
Taking \( Z_N = \sqrt{N} \Sigma_N^{-1/2} \Phi_N (z_N)(z_N-z_0) \), by Theorem \ref{t:asy_est} (see (\ref{q:asy_est})) \( Z_N \) converges in distribution to \(Z \). Then with \(\psi_N= (\Phi_N^{-1}(z_N)\Sigma_N^{1/ 2})_j \), and \( \psi=(L_K^{-1}\Sigma_0^{1/2})_j \),  it follows from Lemma \ref{t:lem2} that
\begin{eqnarray}
&& \Pr \left( -a^r(\psi_N) \le \psi_N(Z_N)-r \le a^r(\psi_N) \right) \nonumber \\
=&& \Pr \left( -a^r(\psi_N) \le (\Phi_N^{-1}(z_N)\Sigma_N^{1/ 2})_j\big(\sqrt N \Sigma_N^{-1/2} \Phi_N (z_N-z_0)\big)-r \le a^r(\psi_N) \right) \nonumber \\
=&& \Pr \left( -a^r(\psi_N) \le \sqrt{N}(\Phi_N^{-1}(z_N)\Sigma_N^{1/ 2})_j\big( \Sigma_N^{-1/2} \Phi_N (z_N-z_0)\big)-r \le a^r(\psi_N) \right) \nonumber \\
=&& \Pr\left( -a^r\big((\Phi_N^{-1}(z_N)\Sigma_N^{1/ 2} )_j\big) \le \sqrt{N}(z_N-z_0)_j-r \le a^r\big((\Phi_N^{-1}(z_N)\Sigma_N^{1/ 2} )_j\big) \right) \nonumber 
\end{eqnarray}
converges to $1-\alpha$ as \(N \rightarrow \infty\).
\end{proof} \qed

While Theorem \ref{t:mainres} proves the asymptotic exactness of intervals for a general choice of \(r\), (\ref{q:result}) and (\ref{q:alphdef}) indicate how the choice of \(r\) will affect both an interval's center and length. Additionally, when $\Phi_N(z_N)$ is piecewise linear evaluating \( a^r\big((\Phi_N^{-1}(z_N)\Sigma_N^{1/ 2} )_j\big) \)  requires working with each selection function, which can pose a computational challenge if the number of selection functions is large. The second method limits the computational burden of working with a piecewise linear function by considering only a subset of selection functions indicated by \(z_N\).

\subsection{The second method (an indirect approach)}\label{s:nmicim2}

In this method, we compute confidence intervals for $(z_0)_j$, for each $j=1,\cdots,n$, based on equation (\ref{q:mainres}) in Theorem \ref{t:meth2}, in which $\eta^{\alpha}_j(\cdot,\cdot)$ replaces \(a^r(\cdot)\) in the first method to determine an interval's width. Below we give the definition of $\eta^{\alpha}_j(\cdot,\cdot;)$. Let \(f:\Rset^n \rightarrow \Rset^n\) be a piecewise linear homeomorphism with a family of selection functions \( \left\{ M_1,\dots,M_l \right\}\), and the corresponding conical subdivision \(\left\{ K_1,\dots,K_l\right\}\). As before, let \( (f)_j \) denote the \(j^{\tiny\mbox{th}}\) component function of \(f\). For any choice of cone \(K_i\), \(i=1,\dots,l\),  component \(j =1,\dots,n\) and \(\alpha \in (0,1)\) we first define \(\eta^{\alpha}_j(f,x)\) for points \(x \in \mbox{int} K_i\) as the unique and strictly positive number satisfying
\begin{equation}\label{q:afundef}
\Pr \left( | \left( f^{-1}(Z)\right)_j | \le \eta^{\alpha}_j(f,x), \;  f^{-1}(Z) \in K_i \right)=(1-\alpha)\Pr \left( f^{-1}(Z) \in K_i \right).
\end{equation}
Note that \(\eta^{\alpha}_j(f,x)\) is the same number for all \(x \in \mbox{int}K_i\), since nothing in the above definition depends on the exact location of $x$, except that $K_i$ has to be the cone containing $x$ in its interior. Because \(f\) is a homeomorphism we can rewrite (\ref{q:afundef}) as
\begin{equation}\label{q:adef2}
\Pr \left( | \left( M^{-1}_iZ\right)_j | \le \eta^{\alpha}_j(f,x), \; M^{-1}_iZ \in K_i \right)=(1-\alpha)\Pr \left( M^{-1}_iZ \in K_i \right).
\end{equation}
For points \(x \in \bigcap_{s=1}^k K_{i_s}\) define \(\eta^{\alpha}_j(f,x)=\max\limits_{s=1,\dots,k} \eta^{\alpha}_j(f,x_{i_s})\) where \(x_{i_s} \in \mbox{int}K_{i_s}\).

The following Lemma will play a similar role in the proof of Theorem \ref{t:meth2} as Lemma \ref{t:lem1} did in the proof of Theorem \ref{t:mainres}.
\begin{lemma}\label{t:m2dc}
Let \( \left\{ f_m \right\}_{m=1}^{\infty}\) be a sequence of piecewise linear functions such that for all \(m\) sufficiently large \(f_m\) and \(f\) have a common conical subdivision \( \left\{ K_1,\dots,K_l \right\} \), and
\[
 \sup\limits_{h \in \mathbb{R}^n,h \ne 0} \frac{\| f_m(h)-f(h)\| }{\|h\|} \rightarrow 0.
\]
Then for all \(m\) sufficiently large \(f_m\) will be a homeomorphism and for all \(\alpha \in (0,1)\), \(x\in \Rset^n\) and \(j=1,\dots,n\)  one has \( \eta^{\alpha}_j(f_m,x) \rightarrow \eta^{\alpha}_j(f,x)\).
\end{lemma}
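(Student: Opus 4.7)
The plan is to split the argument into two parts: first deduce that $f_m$ is eventually a homeomorphism so that $\eta^{\alpha}_j(f_m,x)$ is well-defined, and then establish the numerical convergence by a joint-continuity-plus-monotonicity argument applied to the equation that defines $\eta^\alpha_j$.

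For the first part I would invoke Proposition \ref{t:convprop}(b) directly. The hypothesis that $f_m$ and $f$ share the common conical subdivision $\{K_1,\dots,K_l\}$ together with $\sup_{h\ne 0}\|f_m(h)-f(h)\|/\|h\|\to 0$ puts us exactly in the setting of that proposition: the matrices $M_{m,i}$ representing $f_m|_{K_i}$ converge to the matrices $M_i$ representing $f|_{K_i}$, and consequently $f_m$ is a piecewise linear homeomorphism for all sufficiently large $m$, with $M_{m,i}^{-1}\to M_i^{-1}$.

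For the convergence of $\eta^\alpha_j(f_m,x)$, I would first treat the case $x\in\mathrm{int}\,K_i$. Introduce, for invertible $M$ and $\eta\ge 0$,
\[
g(\eta,M)=\Pr\bigl(|(M^{-1}Z)_j|\le \eta,\; M^{-1}Z\in K_i\bigr)-(1-\alpha)\Pr(M^{-1}Z\in K_i),
\]
so that (\ref{q:adef2}) says $\eta^\alpha_j(f,x)$ is the positive root of $g(\cdot,M_i)$. I would then verify (i) $g$ is jointly continuous on $(0,\infty)\times GL_n(\Rset)$, using dominated convergence (the standard Gaussian density is dominated by another Gaussian uniformly over any compact neighborhood in $GL_n$, and the slab boundary $\{y_j=\eta\}$ has Lebesgue measure zero and hence null Gaussian measure), and (ii) for each fixed invertible $M$ the map $\eta\mapsto g(\eta,M)$ is strictly increasing, starts at $-(1-\alpha)\Pr(M^{-1}Z\in K_i)<0$ and tends to $\alpha\Pr(M^{-1}Z\in K_i)>0$ as $\eta\to\infty$; these properties follow because $K_i$ has nonempty $n$-dimensional interior and the Gaussian law has everywhere positive density. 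Setting $\eta_m=\eta^\alpha_j(f_m,x)$ and $\eta^\ast=\eta^\alpha_j(f,x)$, a standard sub-subsequence argument then yields $\eta_m\to\eta^\ast$: any subsequence going to $\infty$ would force $g(\eta_{m_k},M_{m_k,i})\to\alpha\Pr(M_i^{-1}Z\in K_i)>0$, contradicting $g(\eta_{m_k},M_{m_k,i})=0$; any subsequence going to $0$ would similarly force a strictly negative limit; and any bounded positive subsequential limit $\eta^\ast$ must satisfy $g(\eta^\ast,M_i)=0$ by joint continuity, forcing $\eta^\ast=\eta^\alpha_j(f,x)$ by uniqueness.

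For a general $x\in\bigcap_{s=1}^{k}K_{i_s}$, the definition $\eta^\alpha_j(f,x)=\max_{s}\eta^\alpha_j(f,x_{i_s})$ with $x_{i_s}\in\mathrm{int}\,K_{i_s}$ and the analogous identity for $f_m$ (which uses the same cones, since the conical subdivision is common) reduce the general case to the maximum of finitely many convergent sequences, so continuity of the $\max$ closes the argument. I expect the main obstacle to be the joint-continuity verification for $g$ together with ruling out the degenerate subsequential limits $\eta^\ast\in\{0,\infty\}$; both rest on the single fact that each $K_i$ has nonempty $n$-dimensional interior, so the Gaussian probability $\Pr(M_i^{-1}Z\in K_i)$ is strictly positive.
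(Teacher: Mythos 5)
Your proposal is correct and follows essentially the same route as the paper: the paper also obtains the homeomorphism property from Proposition \ref{t:convprop} and then omits the convergence argument, stating only that it is analogous to the proof of Lemma \ref{t:lem1} (i.e.\ a subsequence argument that rules out the degenerate limits $0$ and $\infty$ and then uses continuity plus uniqueness of the defining root). Your explicit packaging via the jointly continuous, strictly increasing function $g(\eta,M)$ and dominated convergence is a clean way of supplying the details the paper leaves out, but it is not a genuinely different method.
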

\begin{proof}
From Proposition \ref{t:convprop} it follows that \(f_m\) will be a homeomorphism for all \(m\) sufficiently large. The convergence of   \( \eta^{\alpha}_j(f_m,x)\) to \( \eta^{\alpha}_j(f,x)\) can be shown using an argument analogous to the one used in the proof of Lemma \ref{t:lem1} and is therefore omitted.
\end{proof} \qed

In the proof of Theorem \ref{t:meth2} we make use of the notation introduced before Theorem \ref{t:indold}. With this notation \(\Gamma'(z_0)=\left\{K_1,\dots,K_k\right\}\)  is the conical subdivision associated with \(d(f_0)_S(z_0)\) such that \(d(f_0)_S(z_0)|_{K_i}=M_i\)  and \(K_i=\cone(P_i-z_0)\) where \(P_1,\dots,P_k\) are all \(n\)-cells in the normal manifold of \(S\) that contain \(z_0\). Note that for \(i=1,\dots,k\) we can write \(Y^i=M_i^{-1}\Sigma_0^{1/2}Z\) and \(Y_0=\Sigma_0^{1/2}Z\) where \( Z \sim \mathcal{N}(0,I)\). Finally we define \(Y^*=d(f_0)_S^{-1}(z_0)\Sigma_0^{1/2}Z\), and note that \(Y^*\mathds{1}_{Y^*\in K_i}=Y^i\mathds{1}_{Y^i\in K_i}\).

\begin{theorem}\label{t:meth2}
Let Assumptions 2, 3 and 4 hold. Then with \(\Phi_N(z_N)(\cdot)\) and \(z_N^*\) as defined in (\ref{q:funcent}) one has that for all \(j=1,\dots,n\) and \(\alpha \in (0,1)\),
\begin{equation}\label{q:mainres}
\Pr\left(\sqrt{N}|(z_N-z_0)_j| \le \eta^{\alpha}_j(\Sigma_N^{-1/2}\Phi_N(z_N),z_N-z_N^*) \right) \rightarrow 1-\alpha.
\end{equation}
\end{theorem}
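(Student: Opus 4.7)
The plan is to localize the analysis to a single cone of $\Gamma'(z_0)$, using the fact that on a high-probability event the construction of $z_N^*$ correctly identifies the cell $C_{i_0}$ whose relative interior contains $z_0$. Define $A_N$ to be the event that $z_N^* \in \ri C_{i_0}$, where $C_{i_0}$ is the smallest-dimensional cell in the normal manifold of $S$ whose relative interior contains $z_0$; by the exponential bound (\ref{q:exp_conv}) and the discussion following (\ref{q:funcent}), $\Pr(A_N) \to 1$. On $A_N$ one has $d\Pi_S(z_N^*) = d\Pi_S(z_0)$, so both $\Phi_N(z_N)$ and $d(f_0)_S(z_0) = L_{K_0}$ admit the common conical subdivision $\Gamma'(z_0) = \{K_1, \dots, K_k\}$, and likewise for $\Sigma_N^{-1/2}\Phi_N(z_N)$ and $\Sigma_0^{-1/2}L_{K_0}$. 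Writing $P_1, \dots, P_k$ for the $n$-cells of the normal manifold with $K_i = \cone(P_i - z_0)$, locally near $z_0$ the condition $z_N \in \mbox{int}\,P_i$ is equivalent to $z_N - z_0 \in \mbox{int}\,K_i$, and on $A_N \cap \{z_N \in \mbox{int}\,P_i\}$ the vector $z_N - z_N^*$ also lies in $\mbox{int}\,K_i$.

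The next step is the decomposition
\[
\Pr\bigl(\sqrt N|(z_N-z_0)_j| \le \eta^{\alpha}_j(\Sigma_N^{-1/2}\Phi_N(z_N), z_N-z_N^*)\bigr) = \sum_{i=1}^k \Pr(\mathcal{E}_{N,i}) + o(1),
\]
where $\mathcal{E}_{N,i}$ is the intersection of $A_N$, $\{z_N \in \mbox{int}\,P_i\}$, and the inequality event; boundary terms are negligible since $\sqrt N(z_N-z_0)\Rightarrow Y^* := L_{K_0}^{-1}\Sigma_0^{1/2}Z$ (with $Z\sim\mathcal{N}(0,I_n)$) and $Y^*$ charges each $\partial K_i$ with probability zero. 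On $\mathcal{E}_{N,i}$, the definition (\ref{q:afundef}) implies that $\eta^{\alpha}_j(\Sigma_N^{-1/2}\Phi_N(z_N), z_N-z_N^*)$ depends only on the single selection function $M_{N,i}^{-1}\Sigma_N^{1/2}$ of $\Phi_N^{-1}(z_N)\Sigma_N^{1/2}$ on $K_i$. Lemma \ref{t:m2dc}, combined with the uniform-on-compacts convergence in probability of $\Phi_N^{-1}(z_N)\Sigma_N^{1/2}$ to $L_{K_0}^{-1}\Sigma_0^{1/2}$ from Proposition \ref{t:sviprop}, then gives that this half-width converges in probability to the deterministic number $\eta^{\alpha}_{j,0,i}$ characterized by
\[
\Pr\bigl(|(Y^*)_j| \le \eta^{\alpha}_{j,0,i},\, Y^* \in K_i\bigr) = (1-\alpha)\Pr(Y^* \in K_i).
\]

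Combining these ingredients with $\sqrt N(z_N-z_0)\Rightarrow Y^*$ from (\ref{q:zN_dist_cov}) via a Slutsky-type joint convergence yields
\[
\Pr(\mathcal{E}_{N,i}) \to \Pr\bigl(|(Y^*)_j| \le \eta^{\alpha}_{j,0,i},\, Y^* \in K_i\bigr) = (1-\alpha)\Pr(Y^* \in K_i),
\]
and summing over $i$ using $\sum_i \Pr(Y^* \in K_i) = 1$ produces the claimed limit $1-\alpha$. The main obstacle is precisely this last joint convergence: the half-width is a discontinuous functional of the data (jumping across the boundaries of $K_1, \dots, K_k$), and it must be combined with a statistic whose limit distribution also concentrates mass on those very cones. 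The reason for conditioning on $\mathcal{E}_{N,i}$ (rather than working unconditionally as in Theorem \ref{t:mainres}) is that on each slice $\mathcal{E}_{N,i}$ the half-width equals, up to a vanishing error, the fixed constant $\eta^{\alpha}_{j,0,i}$, so the converging-together lemma applies cleanly slice by slice; reassembling the slices via the covering property of $\{K_i\}$ recovers the unconditional limit $1-\alpha$.
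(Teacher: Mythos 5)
Your proposal is correct and follows essentially the same route as the paper's proof: the high-probability event $A_N$ identifying the cell containing $z_0$, the decomposition over the cones $K_i=\cone(P_i-z_0)$ with negligible boundary contributions, the per-cone convergence in probability of the half-width to the constant $\eta^{\alpha}_{j,0,i}$ via Lemma \ref{t:m2dc} and Proposition \ref{t:sviprop}, and the reassembly using $\sum_i\Pr(Y^*\in K_i)=1$. The only difference is one of detail: where you invoke a ``Slutsky-type joint convergence'' slice by slice, the paper makes that step explicit by constructing auxiliary variables $v^{i,j}_N$, $\hat Y^{i,j}$, $\hat\eta^{i,j}_N$ with a dummy point $\bar v^{i,j}$ outside $K_i$, which is a concrete implementation of exactly the converging-together argument you describe.
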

\begin{proof}
Let \(C_i\), \(i=1,\dots,l\) be all of the cells in the normal manifold of \(S\), and for each \(N\) define the event
 \begin{equation}\label{q:consubcon}
A_N= \bigg\{ \omega \bigg| \big\{i | d_i(z_N(\omega)) \le 1/g(N)\big\} = \big\{ i | z_0 \in C_i \big\}\bigg\}.
\end{equation}
  By the remarks below (\ref{q:funcent}), if \( \omega \in A_N\) then the two points $z_N^*$ and $z_0$ belong to the relative interior of the same cell in the normal manifold of $S$, with \(\Gamma'(z_0)=\Gamma'(z_N^*(\omega))\) and  \(d(f_0)_S(z_0)\) and \(\Phi_N(z_N(\omega))\) sharing the conical subdivision \( \left\{K_1,\dots,K_k \right\}\). Moreover as shown in \cite[Theorem 3.1]{l:acr} \(\lim_{N \rightarrow \infty} \Pr \left( A_N \right) =1\), so it follows from (\ref{q:funconp})
\begin{equation}\label{q:tconeq}
 \lim_{N \to \infty}\Pr\left(A_N; \; \sup_{h \in \mathbb{R}^n, h \ne 0} \frac{\| \Sigma_N^{-1/ 2}\Phi_N(z_N)(h)-\Sigma_0^{-1/2}d(f_0)_S(z_0)(h) \|   }{\| h \|} < \epsilon   \right)=1.
\end{equation}
Combining this with Lemma \ref{t:m2dc} it follows that for all fixed \(x\), \(\eta^{\alpha}_j(\Sigma_N^{-1/2}\Phi_N(z_N),x)\) converges in probability to \(\eta^{\alpha}_j(\Sigma_0^{-1/2}d(f_0)_S(z_0),x)\).

Next let \(B\) be a fixed neighborhood of \(z_0\) such that \(B\cap( z_0+ K_i) = B \cap P_i\) for \(i=1,\dots,k\). We then have
\begin{eqnarray}
&& \lim\limits_{N\rightarrow \infty} \Pr \left(\sqrt{N}|(z_N-z_0)_j| \le \eta^{\alpha}_j(\Sigma_N^{-1/2}\Phi_N(z_N),z_N-z_N^*) \right) \nonumber \\
=&&  \lim\limits_{N\rightarrow \infty} \Pr \left(\sqrt{N}|(z_N-z_0)_j| \le \eta^{\alpha}_j(\Sigma_N^{-1/2}\Phi_N(z_N),z_N-z_N^*); \;  A_N\right) \nonumber \\
=&&  \lim\limits_{N\rightarrow \infty} \sum\limits_{i=1}^k\Pr \left(\sqrt{N}|(z_N-z_0)_j| \le \eta^{\alpha}_j(\Sigma_N^{-1/2}\Phi_N(z_N),z_N-z_N^*); \;  A_N; \;  z_N \in B \cap \mbox{int}P_i\right) \nonumber \\
=&&  \lim\limits_{N\rightarrow \infty} \sum\limits_{i=1}^k\Pr \left(\sqrt{N}|(z_N-z_0)_j| \le \eta^{\alpha}_j(\Sigma_N^{-1/2}\Phi_N(z_N),x_i); \;  A_N; \; z_N \in B \cap \mbox{int}P_i\right) \nonumber
\end{eqnarray}
where \(x_i\) is any point in \(\mbox{int}K_i\). The first equality above follows from \( \lim_{N\rightarrow \infty} \Pr\left(A_N\right)=1\), and the second from \(\lim_{N\rightarrow \infty} \Pr\left( z_N \in \Rset^n \backslash \cup_{i=1}^k B \cap\mbox{int}P_i\right)=0\) as shown in \cite[Proposition 3.5]{lu:crc}. For the final equality, recall that $\omega \in A_N$ implies that $z_N^*$ and $z_0$ belong to the relative interior of the same cell in the normal manifold. Since the latter cell is a face of each $P_i$, $i=1,\cdots, k$, by the additional requirement \(z_N \in \mbox{int}P_i\) one has  \(z_N -z_N^*\in \cone(\mbox{int}P_i-z_N^*)\) and the latter set is exactly $\cone(\mbox{int}P_i-z_0)$, namely $\mbox{int}K_i$.

When \(k=1\), \(z_0\) is contained in the interior of an \(n\)-cell \(P_1\) and \(K_1=\mathbb{R}^n\). In this case \(Y^* \sim \mathcal{N}\left( 0,M_1^{-1}\Sigma_0M_1^{-T} \right)\), and (\ref{q:mainres}) follows from,
\[
\frac{\sqrt{N}(z_N-z_0)_j}{\eta^{\alpha}_j(\Sigma_N^{-1/2}\Phi_N(z_N),x_1)} \Rightarrow \frac{(Y^*)_j}{\eta^{\alpha}_j(\Sigma_0^{-1/2}d(f_0)_S(z_0),x_1)}.
\]

Next we consider the case when \(k \ge 2\). For all \(j=1,\dots,n\) and \(i=1,\dots,k\) let \(\bar{v}^{i,j}\in \Rset^n\) be such that \(\bar{v}^{i,j}\not\in K_i \) and \(|(\bar{v}^{i,j})_j|>\eta^{\alpha}_j(\Sigma_0^{-1/2}d(f_0)_S(z_0),x_i)\). Define random variables
\begin{eqnarray}
v^{i,j}_N&&=\sqrt{N}(z_N-z_0)\mathds{1}_{z_N\in B \cap \mbox{int}P_i}+\bar{v}^{i,j}\mathds{1}_{z_N \not\in B \cap \mbox{int}P_i}, \nonumber \\
\hat{Y}^{i,j}&&=Y^i\mathds{1}_{Y^i \in \mbox{int}K_i}+\bar{v}^{i,j}\mathds{1}_{Y^i \not\in \mbox{int}K_i}, \nonumber \\
\hat{\eta}^{i,j}_N&&=\eta^{\alpha}_j\left(\Sigma_N^{-1/2}\Phi_N(z_N),x_i \right)\mathds{1}_{z_N \in B \cap \mbox{int}P_i}+\eta^{\alpha}_j\left(\Sigma_0^{-1/2}d(f_0)_S(z_0),x_i\right)\mathds{1}_{z_N \not\in B \cap \mbox{int}P_i}, \nonumber
\end{eqnarray}
and note that
\[
\hat{\eta}^{i,j}_N\Rightarrow \eta^{\alpha}_j\left(\Sigma_0^{-1/2}d(f_0)_S(z_0),x_i\right).
\]
Next, for all Borel sets \(W\subset \mbox{int}K_i\),
\begin{eqnarray}
\Pr\left( v^{i,j}_N \in W \right) &&= \Pr\left(\sqrt{N}(z_N-z_0) \in W,\; z_N \in B\cap \mbox{int}P_i\right) \nonumber \\
&&=\Pr\left(\sqrt{N}(z_N-z_0) \in W,\; z_N \in B\right), \nonumber
\end{eqnarray}
and hence
\begin{eqnarray}
\lim\limits_{N\rightarrow \infty} \Pr \left(v^{i,j}_N \in W\right) &&= \lim\limits_{N\rightarrow \infty} \Pr \left(\sqrt{N}(z_N-z_0) \in W, z_N \in B\right) \nonumber\\
&&=\lim\limits_{N\rightarrow \infty}\Pr\left(\sqrt{N}(z_N-z_0) \in W\right) \nonumber \\
&&=\Pr\left(Y^* \in W\right)=\Pr\left(Y^i \in W\right)=\Pr\left(\hat{Y}^{i,j} \in W\right). \label{q:intcon}
\end{eqnarray}
Since \(z_N \rightarrow z_0\) in probability and $\mbox{int} K_i = \cone(\mbox{int} P_i -z_0)$, it follows that as \(N \rightarrow \infty\),
\[
\Pr \left( \sqrt{N} (z_N-z_0) \in (\mbox{int}K_i)^c, \; z_N \in B \cap \mbox{int}P_i \right) \rightarrow 0,
\]
and
\[
\Pr \left( z_N \not\in B \cap \mbox{int}P_i \right) \rightarrow
\Pr \left( Y^* \not\in \mbox{int}K_i \right) =
\Pr \left( Y^i \not\in \mbox{int}K_i \right)
=
\Pr \left( \hat{Y}^{i,j} \not\in \mbox{int}K_i \right).
\]
Thus for any Borel set \(D\) in \(\mathbb{R}^n\),
\begin{eqnarray}
&& \lim\limits_{N\rightarrow \infty}  \Pr \left( v_N^{i,j} \in D \cap (\mbox{int}K_i)^c \right) \nonumber\\
&& =\lim\limits_{N\rightarrow \infty} \mathds{1}_{D \cap (\mbox{int}K_i)^c}(\bar{v}^{i,j}) \Pr \left( z_N \not\in B \cap \mbox{int}P_i \right) \nonumber\\
&&  = \mathds{1}_{D \cap (\mbox{int}K_i)^c}(\bar{v}^{i,j}) \Pr \left( \hat{Y}^{i,j} \not\in \mbox{int}K_i \right) \nonumber \\
&& = \Pr \left( \hat{Y}^{i,j} \in D \cap (\mbox{int}K_i)^c \right). \label{q:cintcon}
\end{eqnarray}
Combining (\ref{q:intcon}) with (\ref{q:cintcon}) and since \(\hat{\eta}^{i,j}_N\) and \(\eta^{\alpha}_j\left(\Sigma_0^{-1/2}d(f_0)_S(z_0),x_i\right)\) are strictly positive under our assumptions we have that
\[
\frac{v^{i,j}_N}{\hat{\eta}^{i,j}_N} \Rightarrow \frac{\hat{Y}^{i,j}}{\eta^{\alpha}_j\left(\Sigma_0^{-1/2}d(f_0)_S(z_0),x_i\right)},
\]
and thus
\begin{eqnarray}
\lim\limits_{N\rightarrow \infty} && \Pr\left( \Big| \frac{(v^{i,j}_N)_j}{\hat{\eta}_N^{i,j}} \Big| \le 1\right) = \Pr\left( \Big| \frac{(\hat{Y}^{i,j})_j}{\eta^{\alpha}_j\left(\Sigma_0^{-1/2}d(f_0)_S(z_0),x_i\right)} \Big| \le 1\right) \nonumber \\
&&=\Pr\left( \Big| \frac{(Y^{i})_j}{\eta^{\alpha}_j\left(\Sigma_0^{-1/2}d(f_0)_S(z_0),x_i\right)} \Big| \le 1,\; Y^i \in \mbox{int}K_i\right), \nonumber
\end{eqnarray}
where we used the fact \( |(\bar{v}^{i,j})_j| > \eta^{\alpha}_j\left(\Sigma_0^{-1/2}d(f_0)_S(z_0),x_i\right)\). The latter fact also implies \(\lim\limits_{N\rightarrow \infty} \Pr\left( \Big| \frac{(\bar{v}^{i,j})_j}{\hat{\eta}^{i,j}_N} \Big| \le 1\right) = 0\), so it follows that
\begin{eqnarray}
&&\lim\limits_{N\rightarrow \infty} \Pr \left(\sqrt{N}\frac{|(z_N-z_0)_j|}{\eta^{\alpha}_j(\Sigma_N^{-1/2}\Phi_N(z_N),x_i)} \le 1;\;  A_N; \; z_N \in B \cap \mbox{int}P_i\right) \nonumber \\
&&=\lim\limits_{N\rightarrow \infty} \Pr \left(\sqrt{N}\frac{|(z_N-z_0)_j|}{\hat{\eta}^{i,j}_N} \le 1,\; z_N \in B \cap \mbox{int}P_i\right)=\lim\limits_{N\rightarrow \infty} \Pr \left(\frac{|(v^{i,j}_N)_j|}{\hat{\eta}^{i,j}_N} \le 1\right) \nonumber \\
&&=\Pr\left( \Big| \frac{(Y^{i})_j}{\eta^{\alpha}_j\left(\Sigma_0^{-1/2}d(f_0)_S(z_0),x_i\right)} \Big| \le 1, \;Y^i \in \mbox{int}K_i\right) \nonumber \\
&&=\Pr\left( |(M_i^{-1}\Sigma_0^{1/2}Z)_j| \le \eta^{\alpha}_j\left(\Sigma_0^{-1/2}d(f_0)_S(z_0),x_i\right),\; M_i^{-1}\Sigma_0^{1/2}Z \in K_i\right) \nonumber \\
&&=\Pr\left( |(d(f_0)_S^{-1}(z_0)\Sigma_0^{1/2}Z)_j| \le \eta^{\alpha}_j\left(\Sigma_0^{-1/2}d(f_0)_S(z_0),x_i\right),\; d(f_0)_S^{-1}(z_0)\Sigma_0^{1/2}Z \in K_i\right) \nonumber\\
&&=(1-\alpha)\Pr\left( d(f_0)_S^{-1}(z_0)\Sigma_0^{1/2}Z \in K_i\right).\nonumber
\end{eqnarray}
Finally, since on \(A_N\) we have \(z_N-z_N^* \in \mbox{int}K_i\),
\begin{eqnarray}
&& \lim\limits_{N\rightarrow \infty} \Pr \left(\sqrt{N}|(z_N-z_0)_j| \le \eta^{\alpha}_j(\Sigma_N^{-1/2}\Phi_N(z_N),z_N-z_N^*) \right) \nonumber \\
&&=\lim\limits_{N\rightarrow \infty}\sum\limits_{i=1}^{k} \Pr \left(\sqrt{N}\frac{|(z_N-z_0)_j|}{\eta^{\alpha}_j(\Sigma_N^{-1/2}\Phi_N(z_N),x_i)} \le 1;\;  A_N; \; z_N \in B \cap \mbox{int}P_i\right) \nonumber \\
&&=\sum\limits_{i=1}^{k}(1-\alpha)\Pr\left( d(f_0)_S^{-1}(z_0)\Sigma_0^{1/2}Z \in K_i\right) \nonumber \\
&&=(1-\alpha)\sum\limits_{i=1}^{k}\Pr\left( d(f_0)_S^{-1}(z_0)\Sigma_0^{1/2}Z \in K_i\right)=1-\alpha.\nonumber
\end{eqnarray}
\end{proof} \qed

Comparing the above two methods, computation of $\eta^{\alpha}_j(\Sigma_N^{-1/2}\Phi_N(z_N),z_N-z_N^*)$ is more efficient than that of $a^r\left( ( \Phi_N^{-1}(z_N)\Sigma_N^{1/ 2} )_j \right)$, as it with high probability restricts the computation to a single cone in the conical subdivision of $\Phi_N(z_N)$, namely the cone that contains $z_N-z_N^*$ in its interior (the same cone also contains $z_N-z_0$ in its interior whenever the event $A_N$ in (35) holds).


\subsection{The third method (a direct approach)}\label{s:icim1}

Comparing the asymptotic distributions for $z_N$ and $x_N$, as given by (\ref{q:zN_dist_cov}) and (\ref{q:xN_dist_cov}) respectively, we see that the latter distribution has \(\Pi_{K_0}\) in it, the projector onto the critical cone to \(S\) at \(z_0\). Since \(\Pi_{K_0}\) is generally non-invertible, neither of the methods presented in \S \ref{s:nmicim1} and \S \ref{s:nmicim2} can be used to directly construct intervals for \( (x_0)_j\). Both methods require the invertibility of the function appearing in the asymptotic distribution either in the construction of an interval or the proof of the interval's exactness.

The non-invertibility of \(\Pi_{K_0}\) also leads us to change our focus from asymptotically exact intervals to intervals meeting a specified minimum level of confidence for the following reason. If the function \(\Pi_{K_0} \circ (L_{K_0})^{-1}(\cdot)\) appearing in (\ref{q:xN_dist_cov}) has a selection function whose matrix representation contains a row of zeros (say the $j$th row), then there is a non-zero probability for \((x_N)_j\) to equal \( (x_0)_j \). In this case any reasonable method for constructing individual confidence intervals of $(x_0)_j$ will have a lower bound on its performance:  no matter how narrow the interval is, the probability for it to contain $(x_0)_j$ is no less than the probability for $(x_0)_j$ and $(x_N)_j$ to coincide.

The method to be presented below determines the interval width based on equation (\ref{q:icimres}) in Theorem \ref{t:icim}, in which \(h^{\alpha}_j(\cdot,\cdot,\cdot)\) replaces \(\eta^{\alpha}_j(\cdot,\cdot)\) in the previous method. Below we introduce the definition of \(h^{\alpha}_j(f,g,x)\), where \(f\) and \(g\) are piecewise linear functions from \(\mathbb{R}^n\) to \(\mathbb{R}^n\) that share a common conical subdivision, \(\left\{ K_1,\dots,K_k\right\}\), with \(g\) invertible. For any choice of cone \(K_i\), \(i=1,\dots,k\), component \(j =1,\dots,n\) and \(\alpha \in (0,1)\) we first define \(h^{\alpha}_j(f,g,x)\) for points \(x \in \mbox{int}K_i\) to be
\[
h^{\alpha}_j(f,g,x)=\inf \left\{ l \ge 0 \; \Big| \; \frac{ \Pr \left( |\left(f(g^{-1}(Z))\right)_j| \le l \mbox{ and } g^{-1}(Z) \in K_i \right)}{\Pr\left( g^{-1}(Z) \in K_i \right)} \ge (1-\alpha) \right\}.
\]
Denoting the matrix representations of the selection functions on each cone as \(f|_{K_i}=Q_i\) and \(g|_{K_i}=M_i\), for all points \(x \in \mbox{int}K_i\) the function \(h^{\alpha}_j(f,g,x)\) will take the same value and the above definition is equivalent to
\begin{equation}\label{q:hici}
h^{\alpha}_j(f,g,x)=\inf \left\{ l \ge 0 \; \Big| \; \frac{\Pr \left( |(Q_i)_jM_i^{-1}Z| \le l \mbox{ and } M_i^{-1}Z \in K_i \right)}{\Pr\left( M_i^{-1}Z \in K_i \right) } \ge (1-\alpha)\right\}.
\end{equation}
For points \(x \in \bigcap_{s=1}^v K_{i_s}\) define \(h^{\alpha}_j(f,g,x)=\max_{s=1,\dots,v} h^{\alpha}_j(f,g,x_{i_s})\) where \(x_{i_s} \in \mbox{int}K_{i_s}\). As shown in the following lemma we can identify when \(h^{\alpha}_j(f,g,x)=0\) based on \(x\) and the matrix representations for the appropriate selection functions of \(f\).
\begin{lemma}\label{t:icihz}
For any point \(x \in \bigcap_{s=1}^v K_{i_s}\), \(j=1,\dots,n\) and \(\alpha \in (0,1)\), \(h^{\alpha}_j(f,g,x)=0\) if and only if \( (Q_{i_s})_j\) is the zero vector for all \(s=1,\dots,v\).
\end{lemma}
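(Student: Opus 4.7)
The plan is to reduce to the case $x\in\mathrm{int}\,K_i$ using the definition $h^{\alpha}_j(f,g,x)=\max_{s=1,\dots,v}h^{\alpha}_j(f,g,x_{i_s})$ for $x_{i_s}\in\mathrm{int}\,K_{i_s}$; since this max is zero iff every term is zero, it suffices to prove, for each fixed cone $K_i$ in the conical subdivision, that $h^{\alpha}_j(f,g,x_i)=0$ (for some/any $x_i\in\mathrm{int}\,K_i$) if and only if $(Q_i)_j$ is the zero row. The working definition we use is the one in (\ref{q:hici}), so the whole proof becomes an analysis of the ratio
\[
R_i(l)=\frac{\Pr\left(|(Q_i)_jM_i^{-1}Z|\le l \text{ and }M_i^{-1}Z\in K_i\right)}{\Pr\left(M_i^{-1}Z\in K_i\right)}
\]
as $l\downarrow 0$, where $Z\sim\mathcal{N}(0,I_n)$.

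For the ``if'' direction, suppose $(Q_i)_j=0$. Then $(Q_i)_jM_i^{-1}Z\equiv 0$, so the indicator on the event $\{|(Q_i)_jM_i^{-1}Z|\le l\}$ equals one for every $l\ge 0$, and the numerator in $R_i(l)$ coincides with the denominator. Hence $R_i(l)=1\ge 1-\alpha$ for every $l\ge 0$, and the infimum in (\ref{q:hici}) is attained at $l=0$, giving $h^{\alpha}_j(f,g,x_i)=0$. Note the denominator is strictly positive: since $\{K_1,\dots,K_k\}$ is a conical subdivision of $\mathbb{R}^n$, each $K_i$ has dimension $n$, hence nonempty interior, and the nondegenerate normal density of $M_i^{-1}Z$ (which is well defined because $M_i$ is invertible and $Z$ has a density on $\mathbb{R}^n$) assigns positive mass to every open set; this fact will also be used below.

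For the ``only if'' direction, suppose instead that $(Q_i)_j\ne 0$. Then the map $Z\mapsto(Q_i)_jM_i^{-1}Z$ is a nontrivial linear functional on $\mathbb{R}^n$, so its kernel is a proper hyperplane and therefore has Lebesgue measure zero; consequently $\Pr((Q_i)_jM_i^{-1}Z=0)=0$. This makes the numerator of $R_i(0)$ vanish while its denominator is strictly positive by the argument above, so $R_i(0)=0<1-\alpha$. Because $l=0$ fails the constraint defining $h^{\alpha}_j(f,g,x_i)$ in (\ref{q:hici}), the infimum cannot be attained at $0$, hence $h^{\alpha}_j(f,g,x_i)>0$.

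There is no real obstacle here; the only point requiring mild care is justifying that the hyperplane $\{(Q_i)_jM_i^{-1}Z=0\}$ is null and the cone $K_i$ has positive Gaussian mass, both of which follow immediately from the assumption that $\{K_1,\dots,K_k\}$ is an $n$-dimensional conical subdivision and that $M_i$ is invertible (which is in turn built into the assumed structure of $g$ in the definition preceding (\ref{q:hici})). Combining the two directions with the reduction at the start of the first paragraph yields the equivalence claimed in the lemma.
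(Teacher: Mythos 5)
Your argument is essentially the paper's, run in the contrapositive direction: the paper assumes $h^{\alpha}_j(f,g,x)=0$ and deduces from $\Pr\left((Q_i)_jM_i^{-1}Z=0\right)>0$ that the one-dimensional Gaussian $(Q_i)_jM_i^{-1}Z$ is degenerate, hence $(Q_i)_j=0$; you assume $(Q_i)_j\ne 0$ and use the same fact (a nondegenerate Gaussian hits $0$ with probability zero, while the cone has positive mass) to conclude $h^{\alpha}_j(f,g,x)>0$. The reduction to $x\in\mbox{int}\,K_i$ and the ``if'' direction are handled identically in both.

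One step as written does not follow: from ``$l=0$ fails the constraint, so the infimum cannot be attained at $0$'' you conclude $h^{\alpha}_j(f,g,x_i)>0$, but an infimum can equal $0$ without being attained (consider $\inf\{1/m : m\in\mathbb{N}\}$). To close this you need that $R_i(l)\downarrow R_i(0)=0$ as $l\downarrow 0$, which holds by continuity from above of probability measures since the events $\{|(Q_i)_jM_i^{-1}Z|\le l\}$ decrease to $\{(Q_i)_jM_i^{-1}Z=0\}$; then $R_i(l)<1-\alpha$ for all $l$ below some $l_0>0$, so the infimum is at least $l_0$. This is a one-line repair using only standard measure theory (the paper leans on the same right-continuity implicitly when it passes from $h^{\alpha}_j(f,g,x)=0$ to the displayed inequality at $l=0$), so the proof is sound once that sentence is justified.
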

\begin{proof}
It suffices to prove the result for \(x \in \mbox{int} K_i\). If \(h^{\alpha}_j(f,g,x)=0\),
\[
0< (1-\alpha)\Pr\left( M^{-1}_iZ \in K_i \right) \le \Pr \left( | (Q_i)_jM^{-1}_iZ| \le 0 \mbox{ and } M^{-1}_iZ \in K_i \right),
\]
and hence,
\begin{equation}\label{q:icihz}
0 < \Pr \left( (Q_i)_jM^{-1}_iZ=0 \mbox{ and } M^{-1}_iZ \in K_i \right) \le \Pr \left( (Q_i)_jM^{-1}_iZ=0 \right).
\end{equation}
Since \((Q_i)_jM^{-1}_iZ\sim \mathcal{N}\left(0,\|(Q_i)_jM^{-1}_i\|^2\right)\), where \(\|\cdot\|\) denotes the Euclidian norm, (\ref{q:icihz}) implies that \(\|(Q_i)_jM^{-1}_i\|=0\), and thus \((Q_i)_j\) is a vector of zeroes. The reverse implication follows immediately.
\end{proof} \qed

When using \(h^{\alpha}_j(f,g,x)\) to construct confidence intervals for solutions to (\ref{q:true}) we will be interested in
\[
f=\Pi_{K_0} \mbox{ and }\; g=\Sigma_0^{-1/2}d(f_0)_S(z_0)
\]
and their estimates
\[
f_N=\Lambda_N(z_N)=d\Pi_{S}(z_N^*) \mbox{ and } \; g_N=\Sigma_N^{-1/2}\Phi_N(z_N).
\]
From  (\ref{q:dSnK}) and (\ref{q:bdcon}) it follows that the probability of all four functions sharing a common conical subdivision and \(f_N\) equalling \(f\) goes to one as the sample size goes to infinity. We therefore take this to be the setting for the following lemma.
\begin{lemma}\label{t:iciconv}
Let \(f, g:\mathbb{R}^n \rightarrow \mathbb{R}^n\) be piecewise linear functions with \(g\) a homeomorphism. Suppose that \(\left\{f_N\right\}_{N=1}^{\infty}\) and \(\left\{g_N\right\}_{N=1}^{\infty}\) are two sequences of piecewise linear functions such for all \(N\) sufficiently large
\begin{enumerate}
\item \(f_N=f\).
\item \(f\), \(g\) and \(g_N\) all share a common conical subdivision \(\left\{K_1,\dots,K_k\right\}\).
\item \( \sup\limits_{h \in \mathbb{R}^n,h \ne 0} \frac{\| g_N(h)-g(h)\| }{\|h\|} \rightarrow 0.\)
\end{enumerate}
Then for all \(N\) sufficiently large \(g_N\) will be a homeomorphism and \(h^{\alpha}_j(f_N,g_N,x) \rightarrow h^{\alpha}_j(f,g,x)\) for all \(x \in \mathbb{R}^n\), \( \alpha \in (0,1)\) and \(j=1,\dots,n\).
\end{lemma}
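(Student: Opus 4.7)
The plan is to adapt the argument from Lemma \ref{t:lem1} to the conditional-probability setting that defines $h^{\alpha}_j$. First, that $g_N$ is a homeomorphism for all large $N$ and that $g_N^{-1}\to g^{-1}$ uniformly on compacts is immediate from Proposition \ref{t:convprop}(b). By Proposition \ref{t:convprop}(a), if $(M_N)_i$ and $M_i$ denote the matrix representations of $g_N$ and $g$ on $K_i$, then $(M_N)_i\to M_i$ and hence $(M_N)_i^{-1}\to M_i^{-1}$, which gives $(M_N)_i^{-1}Z\to M_i^{-1}Z$ almost surely for $Z\sim \mathcal{N}(0,I_n)$.

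Fix $x$, $j$ and $\alpha$. Because $h^{\alpha}_j(f,g,x)$ is a maximum over the cones whose interiors contain $x$ in their closure, and the same cones appear for $h^{\alpha}_j(f_N,g_N,x)$ once $N$ is large enough for the common conical subdivision, it suffices to treat $x\in\mathrm{int}\,K_i$ for a single $i$. Let $Q_i = f|_{K_i} = f_N|_{K_i}$, where the second equality holds because $f_N=f$. If $(Q_i)_j=0$, Lemma \ref{t:icihz} gives $h^{\alpha}_j(f,g,x)=0=h^{\alpha}_j(f_N,g_N,x)$ for all large $N$, and there is nothing to prove. Otherwise $(Q_i)_j\ne 0$, so $(Q_i)_j M_i^{-1} Z$ is a non-degenerate Gaussian scalar, and the map $l\mapsto \Pr(|(Q_i)_j M_i^{-1} Z|\le l,\, M_i^{-1} Z \in K_i)$ is continuous and strictly increasing on $[0,\infty)$ until it saturates at $\Pr(M_i^{-1} Z \in K_i)$, which is strictly positive since $K_i$ has non-empty interior and the Gaussian assigns mass zero to $\partial K_i$. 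Hence $a:=h^{\alpha}_j(f,g,x)>0$ and the defining inequality is attained with equality at $l=a$.

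Set $a_N:=h^{\alpha}_j(f_N,g_N,x)$. I would then mimic the tightness-plus-subsequence argument from Lemma \ref{t:lem1}. For uniform boundedness of $\{a_N\}$, the continuous mapping theorem together with $(M_N)_i^{-1}Z\to M_i^{-1}Z$ a.s.\ gives, for each fixed $l$ at which the limit distribution is continuous, that both $\Pr(|(Q_i)_j (M_N)_i^{-1} Z|\le l, (M_N)_i^{-1} Z \in K_i)$ and $\Pr((M_N)_i^{-1} Z \in K_i)$ converge to their $M_i$-analogues; choosing $l$ large enough that the limiting ratio strictly exceeds $1-\alpha$ forces $a_N\le l$ eventually. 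For a subsequential limit $a_{N_k}\to a^*$, the defining inequality at $l=a_{N_k}$ combined with Portmanteau (applied to a set whose boundary has zero Gaussian measure because $(Q_i)_j\ne 0$ and $\partial K_i$ is Lebesgue-null) yields $a^*\ge a$; conversely, evaluating the strict inequality at $l=a_{N_k}-\delta$ for small $\delta>0$ and passing to the limit gives $a^*-\delta\le a$, and $\delta\downarrow 0$ produces $a^*\le a$. Thus every subsequential limit equals $a$, proving $a_N\to a$.

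The main obstacle is the limit passage inside the infimum, specifically verifying that the critical value $l=a$ is approached continuously from both directions. This needs continuity of the one-dimensional Gaussian marginal, guaranteed by $(Q_i)_j\ne 0$, together with zero Gaussian measure of $\partial K_i$, guaranteed by full-dimensionality of $K_i$. The other subtle point, handled by the case analysis above, is ensuring that the two sources of potential degeneracy --- $(Q_i)_j$ vanishing on some cones and the cone-structure-induced failure of uniform convergence --- combine correctly through the $\max$ in the definition of $h^{\alpha}_j$ when $x$ lies on a shared boundary of several $K_{i_s}$.
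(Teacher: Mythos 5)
Your proposal is correct and follows essentially the same route as the paper: Proposition \ref{t:convprop} for the homeomorphism claim and the convergence of the selection matrices, reduction to $x\in\mbox{int}\,K_i$, the case split via Lemma \ref{t:icihz} when $(Q_i)_j=0$, and a Lemma \ref{t:lem1}--style tightness-and-subsequence argument in the non-degenerate case. The only difference is that you spell out the details of that last argument (non-degeneracy of the Gaussian marginal, zero Gaussian mass on $\partial K_i$), which the paper omits as ``analogous'' to Lemmas \ref{t:lem1} and \ref{t:m2dc}.
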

\begin{proof}
From Proposition \ref{t:convprop} it follows that for all \(N\) sufficiently large \(g_N\) is a homeomorphism and that \(g_N^{-1}\) converges uniformly on compacts to \(g^{-1}\). Next take \(v\) to be large enough so that for all \(N \ge v\) the functions \(g_N\) are invertible, \(f_N=f\) and \(f\), \(g\) and \(g_N\) all share common conical subdivision \(\left\{K_1,\dots,K_k\right\}\). To prove the remainder of the Lemma's claim it suffices to show that for any \(x \in \mbox{int}K_i\), \(i=1,\dots,k\), \(h^{\alpha}_j(f,g_N,x) \rightarrow h^{\alpha}_j(f,g,x)\).

When \(x \in \mbox{int}K_i\) and \(h^{\alpha}_j(f,g,x)=0\), it follows from Lemma \ref{t:icihz} that \(h^{\alpha}_j(f,g_N,x)=0\). In the case of \(x \in \mbox{int}K_i\) and \(h^{\alpha}_j(f,g,x) > 0\), the convergence can be shown using an argument analogous to the proof of Lemma \ref{t:lem1} and Lemma \ref{t:m2dc} and is therefore omitted.

\end{proof} \qed

The main result of this section, Theorem \ref{t:icim}, can now be proven. We will use the same notation used in Theorem \ref{t:meth2} where \(\Gamma'(z_0)=\left\{K_1,\dots,K_k\right\}\) is the conical subdivision associated with \(d(f_0)_S(z_0)\) such that \(d(f_0)_S(z_0)|_{K_i}=M_i\) and \(K_i=\cone(P_i-z_0)\), where \(P_1,\dots,P_k\) are all \(n\)-cells in the normal manifold of \(S\) that contain \(z_0\). We additionally denote \(\Pi_{K_0}|_{K_i}=Q_i\) and define the following random variables:
\[
\begin{array}{c c c c}
Y^i=M_i^{-1}\Sigma_0^{1/2}Z, & Y_0=\Sigma_0^{1/2}Z & \mbox{and} & Y^*=d(f_0)_S^{-1}(z_0)\Sigma_0^{1/2}Z.
\end{array}
\]
\begin{theorem}\label{t:icim}
Let Assumptions 2, 3 and 4 hold. Let \(\Phi_N(z_N)(\cdot)\) and \(z_N^*\) be as defined in (\ref{q:funcent}). For all \(j=1,\dots,n\) and \(\alpha \in (0,1)\),
\begin{equation}\label{q:icimres}
\lim\limits_{N\rightarrow \infty}\Pr\left(\sqrt{N}|(x_N-x_0)_j| \le h^{\alpha}_j(d\Pi_S(z_N^*),\Sigma_N^{-1/2}\Phi_N(z_N),z_N-z_N^*) \right) \ge 1-\alpha.
\end{equation}
\end{theorem}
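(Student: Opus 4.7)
The plan is to mirror the structure of the proof of Theorem~\ref{t:meth2}, but to handle the extra subtlety that $h^{\alpha}_j$ can now vanish on some cones. As in that proof, define the event $A_N$ as in~(\ref{q:consubcon}); from \cite[Theorem 3.1]{l:acr} one has $\Pr(A_N)\to 1$, and on $A_N$ the two points $z_N^*$ and $z_0$ lie in the relative interior of the same cell in the normal manifold of $S$. Consequently, on $A_N$ the four functions $d\Pi_S(z_N^*)$, $\Pi_{K_0}$, $\Phi_N(z_N)$ and $d(f_0)_S(z_0)$ share the common conical subdivision $\{K_1,\dots,K_k\}$, with $d\Pi_S(z_N^*)=\Pi_{K_0}$. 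Combined with Proposition~\ref{t:sviprop} and Lemma~\ref{t:iciconv}, this yields, for any fixed $x$ in the interior of a cone $K_i$, that $h^{\alpha}_j(d\Pi_S(z_N^*),\Sigma_N^{-1/2}\Phi_N(z_N),x)$ converges in probability to $h^{\alpha}_j(\Pi_{K_0},\Sigma_0^{-1/2}d(f_0)_S(z_0),x_i)$.

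Next, pick a neighborhood $B$ of $z_0$ so small that $B\cap(z_0+K_i)=B\cap P_i$ for each $i=1,\dots,k$ and so that the local identity~(\ref{q:SnK}) applies. On $A_N\cap\{z_N\in B\cap\mathrm{int}\,P_i\}$, equation~(\ref{q:SnK}) together with $d\Pi_S(z_N^*)=\Pi_{K_0}$ gives
\[
 x_N-x_0=\Pi_{K_0}(z_N-z_0)=Q_i(z_N-z_0),
\]
where $Q_i$ denotes the matrix representation of $\Pi_{K_0}$ on $K_i$. Moreover on this event $z_N-z_N^*\in\mathrm{int}\,K_i$, so the random quantity $h^{\alpha}_j(d\Pi_S(z_N^*),\Sigma_N^{-1/2}\Phi_N(z_N),z_N-z_N^*)$ coincides with the value of $h^{\alpha}_j(\Pi_{K_0},\Sigma_N^{-1/2}\Phi_N(z_N),x_i)$ for any fixed $x_i\in\mathrm{int}\,K_i$. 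Using $\Pr(A_N)\to 1$ and $\lim_N\Pr(z_N\in\mathbb{R}^n\setminus\bigcup_{i=1}^k B\cap\mathrm{int}\,P_i)=0$ (cf.\ \cite[Proposition 3.5]{lu:crc}), the probability in~(\ref{q:icimres}) is equal in the limit to the sum over $i=1,\dots,k$ of
\[
 \Pr\!\bigl(\sqrt{N}|(Q_i)_j(z_N-z_0)|\le h^{\alpha}_j(\Pi_{K_0},\Sigma_N^{-1/2}\Phi_N(z_N),x_i);\;A_N;\;z_N\in B\cap\mathrm{int}\,P_i\bigr).
\]

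Now split the index set into $I_+=\{i:h^{\alpha}_j(\Pi_{K_0},\Sigma_0^{-1/2}d(f_0)_S(z_0),x_i)>0\}$ and $I_0=\{i:h^{\alpha}_j(\Pi_{K_0},\Sigma_0^{-1/2}d(f_0)_S(z_0),x_i)=0\}$. For $i\in I_+$, by introducing truncation random variables analogous to the $v^{i,j}_N$ and $\hat{\eta}^{i,j}_N$ of Theorem~\ref{t:meth2} and using the weak convergence $\sqrt{N}(z_N-z_0)\Rightarrow Y^*$ together with $Y^*\mathds{1}_{Y^*\in K_i}=Y^i\mathds{1}_{Y^i\in K_i}$, the $i$-th summand converges to $\Pr(|(Q_i)_j Y^i|\le h^{\alpha}_j,\;Y^i\in K_i)$, which by the definition of $h^{\alpha}_j$ (with the infimum attained whenever it is strictly positive) equals $(1-\alpha)\Pr(Y^i\in K_i)$. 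For $i\in I_0$, Lemma~\ref{t:icihz} forces $(Q_i)_j=0$, so both sides of the inequality $|\sqrt{N}(Q_i)_j(z_N-z_0)|\le h^{\alpha}_j(\Pi_{K_0},\Sigma_N^{-1/2}\Phi_N(z_N),x_i)$ are identically $0$, and hence the $i$-th summand converges to the full $\Pr(Y^i\in K_i)\ge (1-\alpha)\Pr(Y^i\in K_i)$.

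Summing over $i$ and using $\sum_{i=1}^k\Pr(Y^i\in K_i)=\sum_{i=1}^k\Pr(Y^*\in K_i)=1$ delivers the lower bound $1-\alpha$, completing the proof. The main obstacle is the careful decomposition on the event $A_N\cap\{z_N\in B\cap\mathrm{int}\,P_i\}$ and the justification that $h^{\alpha}_j$, evaluated at the data-dependent point $z_N-z_N^*$, may be replaced by its value at a fixed representative $x_i\in\mathrm{int}\,K_i$ without affecting the limit; this is handled by the convergence statement of Lemma~\ref{t:iciconv} together with the fact that $h^{\alpha}_j(f,g,\cdot)$ is constant on the relative interior of each cone in the common subdivision. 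The degenerate case $i\in I_0$ is the feature responsible for the inequality, rather than equality, in~(\ref{q:icimres}).
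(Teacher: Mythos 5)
Your proposal is correct and follows essentially the same route as the paper's proof: the event $A_N$, the neighborhood $B$ with $B\cap(z_0+K_i)=B\cap P_i$, the cone-by-cone decomposition, the truncation random variables for the nondegenerate cones, and the degenerate cones handled via Lemma \ref{t:icihz}. Your case split into $I_+$ and $I_0$ is, by Lemma \ref{t:icihz}, exactly the paper's split on whether $(Q_i)_j$ is the zero vector, so the two arguments coincide.
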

\begin{proof}
As in the proof of Theorem \ref{t:meth2} we begin by letting \(C_i\), \(i=1,\dots,l\) denote the cells in the normal manifold of \(S\) and for each \(N\) let the event \(A_N\) be as defined in (\ref{q:consubcon}). Now for \(\omega \in A_N\) the equality \(\Pi_{K_0}=d\Pi_S(z_N^*)\) holds, and \( \left\{K_1,\dots,K_k \right\}\) provides a common conical subdivision for  \(\Pi_{K_0}\), \(d(f_0)_S(z_0)\) and \(\Phi_N(z_N(\omega))\). From (\ref{q:tconeq}) and Lemma \ref{t:iciconv} it follows that for all fixed \(u\), \(h^{\alpha}_j(d\Pi_S(z_N^*),\Sigma_N^{-1/2}\Phi_N(z_N),u)\) converges in probability to \(h^{\alpha}_j(\Pi_{K_0},\Sigma_0^{-1/2}d(f_0)_S(z_0),u)\).

Next let \(B\) be a fixed neighborhood of \(z_0\) such that \(B\cap( z_0+ K_i) = B \cap P_i\) for \(i=1,\dots,k\). We then have
\begin{eqnarray}
&& \lim\limits_{N\rightarrow \infty} \Pr \left(\sqrt{N}|(x_N-x_0)_j| \le h^{\alpha}_j(d\Pi_S(z_N^*),\Sigma_N^{-1/2}\Phi_N(z_N),z_N-z_N^*) \right) \nonumber \\
=&& \lim\limits_{N\rightarrow \infty} \Pr \left(\sqrt{N}|\left(\Pi_S(z_N)-\Pi_S(z_0)\right)_j| \le h^{\alpha}_j(d\Pi_S(z_N^*),\Sigma_N^{-1/2}\Phi_N(z_N),z_N-z_N^*); \; A_N\right) \nonumber \\
=&& \lim\limits_{N\rightarrow \infty} \sum\limits_{i=1}^k\Pr \left(\sqrt{N}|\left(\Pi_{K_0}(z_N-z_0)\right)_j| \le h^{\alpha}_j(d\Pi_S(z_N^*),\Sigma_N^{-1/2}\Phi_N(z_N),z_N-z_N^*); \; A_N; \; z_N \in B \cap \mbox{int}P_i\right) \nonumber \\
=&& \sum\limits_{i=1}^k \lim\limits_{N\rightarrow \infty} \Pr \left(\sqrt{N}|(Q_i)_j(z_N-z_0)| \le h^{\alpha}_j(d\Pi_S(z_N^*),\Sigma_N^{-1/2}\Phi_N(z_N),u_i); \; A_N; \; z_N \in B \cap \mbox{int}P_i\right) \label{q:icisum}
\end{eqnarray}
where \(u_i\) is any point in \(\mbox{int}K_i\). The first equality uses the relation between solutions to a variational inequality and its normal map formulation, while the second equality combines the almost sure convergence of \(z_N\) to \(z_0\) with (\ref{q:SnK}). The final equality uses the fact that for \(\omega \in A_N\) and \(z_N \in \mbox{int}P_i\) both \(z_N-z_0\) and \(z_N-z_N^*\) will be contained in \(\mbox{int}K_i\) and thus \(z_N-z_N^*\) may be replaced with \(u_i\) and \(\Pi_{K_0}(z_N-z_0)=Q_i(z_N-z_0)\).

 Evaluating each term in (\ref{q:icisum}) depends on \((Q_i)_j\). If \((Q_i)_j\) is the zero vector for some $i$, then
\begin{eqnarray}
&&\lim\limits_{N\rightarrow \infty}\Pr \left(\sqrt{N}|(Q_i)_j(z_N-z_0)| \le h^{\alpha}_j(d\Pi_S(z_N^*),\Sigma_N^{-1/2}\Phi_N(z_N),u_i); \; A_N; \; z_N \in B \cap \mbox{int}P_i\right) \nonumber \\
&&= \lim\limits_{N\rightarrow \infty}\Pr \left( \sqrt{N}(z_N-z_0) \in \mbox{int}K_i \right)=\Pr \left( Y^* \in \mbox{int}K_i \right) \nonumber \\
&&= \Pr \left( d(f_0)^{-1}_S(z_0)\Sigma_0^{1/2}Z \in K_i\right). \label{q:icizp}
\end{eqnarray}

On the other hand, if \((Q_i)_j\) is a nonzero vector (i.e., it contains at least one nonzero element) for some $i$, we define a vector \(\bar{v}^{i,j}\) to be such that \(\bar{v}^{i,j} \not\in K_i\) and \( | (Q_i)_j\bar{v}^{i,j}| > h^{\alpha}_j(\Pi_{K_0},\Sigma_0^{-1/2}d(f_0)_S(z_0),u_i)\). With these we define random vectors
\begin{eqnarray}
v^{i,j}_N&&=\sqrt{N}(z_N-z_0)\mathds{1}_{z_N\in B \cap \mbox{int}P_i}+\bar{v}^{i,j}\mathds{1}_{z_N \not\in B \cap \mbox{int}P_i}, \nonumber \\
\hat{Y}^{i,j}&&=Y^i\mathds{1}_{Y^i \in \mbox{int}K_i}+\bar{v}^{i,j}\mathds{1}_{Y^i \not\in \mbox{int}K_i}, \nonumber \\
\hat{h}^{i,j}_N&&=h^{\alpha}_j\left(d\Pi_S(z_N^*),\Sigma_N^{-1/2}\Phi_N(z_N),u_i \right)\mathds{1}_{z_N \in B \cap \mbox{int}P_i}+h^{\alpha}_j\left(\Pi_{K_0},\Sigma_0^{-1/2}d(f_0)_S(z_0),u_i\right)\mathds{1}_{z_N \not\in B \cap \mbox{int}P_i}. \nonumber
\end{eqnarray}
Using the same arguments as in Theorem \ref{t:meth2} it follows that
\[
\frac{v_N^{i,j}}{\hat{h}^{i,j}_N} \Rightarrow \frac{\hat{Y}^{i,j}}{h^{\alpha}_j\left(\Pi_{K_0},\Sigma_0^{-1/2}d(f_0)_S(z_0),u_i\right)}
\]
and
\begin{eqnarray}
&&\lim\limits_{N\rightarrow \infty} \Pr \left(\sqrt{N}\frac{|(Q_i)_j(z_N-z_0)|}{h^{\alpha}_j(d\Pi_S(z_N^*),\Sigma_N^{-1/2}\Phi_N(z_N),u_i)} \le 1;\; A_N; \; z_N \in B \cap \mbox{int}P_i\right) \nonumber \\
&&=(1-\alpha)\Pr\left( d(f_0)_S^{-1}(z_0)\Sigma_0^{1/2}Z \in K_i\right).\label{q:icinzp}
\end{eqnarray}

Combining (\ref{q:icizp}) and (\ref{q:icinzp}), with the fact that \(z_N-z_N^* \in \mbox{int}K_i\) on \(A_N\), we have
\begin{eqnarray}
&& \lim\limits_{N\rightarrow \infty} \Pr \left(\sqrt{N}|(x_N-x_0)_j| \le h^{\alpha}_j(d\Pi_S(z_N^*),\Sigma_N^{-1/2}\Phi_N(z_N),z_N-z_N^*) \right) \nonumber \\
&&=\lim\limits_{N\rightarrow \infty}\sum\limits_{i=1}^{k} \Pr \left(\sqrt{N}|(Q_i)_j(z_N-z_0)| \le h^{\alpha}_j(d\Pi_S(z_N^*),\Sigma_N^{-1/2}\Phi_N(z_N),u_i);\; A_N; \; z_N \in B \cap \mbox{int}P_i\right) \nonumber \\
&& \ge (1-\alpha)\sum\limits_{i=1}^{k}\Pr\left( d(f_0)_S^{-1}(z_0)\Sigma_0^{1/2}Z \in K_i\right)=1-\alpha.\nonumber
\end{eqnarray}
\end{proof} \qed

An important fact seen in the proof of Theorem \ref{t:icim} is that
\[
\lim\limits_{N\rightarrow \infty}\Pr\left(\sqrt{N}|(x_N-x_0)_j| \le h^{\alpha}_j(\Pi_{K_N},\Sigma_N^{-1/2}\Phi_N(z_N),z_N-z_N^*) \right) > 1-\alpha
\]
if and only if there exists a cone \(K_i\) in the conical subdivision of \(\Pi_{K_0}\) such that the \(j^{\tiny\mbox{th}}\) component of \(\Pi_{K_0}|_{K_i}\) is zero. When this is the case, we have
\[
(x_N(\omega)-x_0)_j=\big(\Pi_S(z_N(\omega))-\Pi_S(x_0)\big)_j=(Q_i)_j\left( z_N(\omega)-z_0\right)=0,
\]
as long as \(z_N(\omega)-z_0 \in \mbox{int}K_i\) and $z_N$ is sufficiently close to $z_0$.
If additionally \(\omega \in A_N\), then \(d\Pi_S(z_N^*)=\Pi_{K_0}\) and   we have by Lemma \ref{t:icihz}
 \[h^{\alpha}_j(d\Pi_S(z_N^*),\Sigma_N^{-1/2}\Phi_N(z_N),z_N-z_N^*) =0, \]
 meaning that the method of Theorem \ref{t:icim} returns the correct point estimate \(\left(x_N(\omega)\right)_j=\left(x_0\right)_j\). Recalling that \(\lim_{N\rightarrow \infty}\Pr(A_N)=1\) and that $z_N$ converges to $z_0$ almost surely, we see that the potentially conservative asymptotic level of confidence is not the result of using unnecessarily long intervals, but instead follows from the fact that for sufficiently large sample sizes the proposed method will return the correct point estimate with a nonzero probability.

While (\ref{q:result}), (\ref{q:mainres}) and (\ref{q:icimres}) provide computable intervals with the desired asymptotic properties, in general \(a^r(\cdot)\), \(\eta^{\alpha}_j(\cdot,\cdot)\) and \(h_J^{\alpha}(\cdot,\cdot,\cdot)\) lack closed form expressions. In the next section we consider the computation of these quantities. For ease of exposition, moving forward we will suppress the arguments of \(a^{r}\), \(\eta^{\alpha}_j\) and \(h_j^{\alpha}\).

\section{Interval Computation}\label{s:comp}

This section considers the computation of \(a^{r}\), \(\eta^{\alpha}_j\) and \(h_J^{\alpha}\), and discusses how to find upper bounds for these quantities. Before presenting a general method for computing \(a^{r}\), \(\eta^{\alpha}_j\) and \(h_J^{\alpha}\) we consider special cases when either closed form expressions exist or less burdensome techniques can be used. For each of these discussions we begin by considering \(a^r\) with the results for  \(\eta^{\alpha}_j\) and \(h_J^{\alpha}\) following in a similar fashion.

The first case we consider is when \(\Sigma_N^{-1/2}\Phi_N(z_N)\) and \(d\Pi_S(z_N^*)\) are linear functions with matrix representations \(M_N\) and \(Q_N\) respectively. Since $(M_N^{-1} Z)_j$ is a mean zero Normal random variable for each coordinate \(j\), it is natural to set \(r=0\) for \(a^r\). Then from basic properties of Normal random vectors,
\[a^0=\eta^{\alpha}_j=\sqrt{\chi^2_1(\alpha)\| (M_N^{-1})_j\|^2} \; \; \mbox{ and } \; \; h^{\alpha}_j=\sqrt{\chi^2_1(\alpha)\| (Q_N)_jM_N^{-1}\|^2}\]
where \(\| \cdot \|\) is the Euclidian norm. Note in this case both intervals for \((z_0)_j\) are the same as the interval considered in Theorem \ref{t:indold}.

In the piecewise linear case let \(\phi_{N,j}\) denote the \(j^{\tiny\mbox{th}}\) component function of \(\Phi_N^{-1}(z_N)\Sigma_N^{1/2}\). Finding \(a^r\) requires a search over values of \(l > 0\) and evaluating \(\Pr \left( |\phi_{N,j}(Z)-r| \le l \right)\). To evaluate this probability we rewrite it in terms of the selection functions of \(\phi_{N,j}\).

To this end, let \(\Gamma=\left\{K_1,\dots,K_k\right\}\) be the common conical subdivision for \(\Sigma_N^{-1/2}\Phi_N(z_N)\) and \(d\Pi_S(z_N^*)\), and let \(\left\{M_{N,1},\dots,M_{N,k}\right\}\) and \(\left\{Q_{N,1},\dots,Q_{N,k}\right\}\) be the matrix representations for the respective selection functions. Then with
\[
T_i=\Sigma_N^{-1/2}\Phi_N(z_N)(K_i)=M_{N,i}(K_i),
\]
\(\left\{T_1,\dots,T_k\right\}\) provides a conical subdivision for \(\phi_{N,j}\) such that  \(\phi_{N,j}|_{T_i}=(M_{N,i}^{-1})_j\). Due to the high probability of \(\Sigma_N^{-1/2}\Phi_N(z_N)\) and \(d(f_0)_S(z_0)\) sharing a common conical subdivision we have used the same notation \(K_i\), \(i=1,\dots,k\), as was introduced before Theorem \ref{t:indold}. In the discussion that follows it is not necessary for the functions to share a common conical subdivision. Additionally, any assumptions made about the value of \(k\) will refer to the number of selection functions for a particular realization of \(\Sigma_N^{-1/2}\Phi_N(z_N)\) unless otherwise stated.

 For any two cones \(T_v,T_u \in \Gamma'\) with \(v \ne u\), their intersection is either empty or a proper face of both cones, and hence \(\Pr\left( Z \in T_v\cap T_u\right) =0\). The probability we need to evaluate can thus be rewritten as
\begin{eqnarray}
\Pr \left( |\phi_{N,j}(Z)-r| \le l \right)&&=\sum\limits_{i=1}^k \Pr \left( |\phi_{N,j}(Z)-r| \le l \mbox{ and } Z \in T_i\right) \nonumber \\
&& =\sum\limits_{i=1}^k \Pr \left( |(M_{N,i}^{-1})_jZ-r| \le l \mbox{ and } Z \in T_i\right) \label{q:poi}.
\end{eqnarray}

Note the connection between (\ref{q:poi}) and what must be considered to find \(\eta^{\alpha}_j\). Finding \(\eta^{\alpha}_j\) requires us to evaluate
\begin{equation}\label{q:m2poi}
\Pr \left( |(M_{N,i}^{-1})_jZ| \le l \mbox{ and }M_{N,i}^{-1}Z \in K_i \right)=\Pr \left( |(M_{N,i}^{-1})_jZ| \le l \mbox{ and } Z \in T_i \right),
\end{equation}
for different values of \(l\), but only for those indices \(i\)  such that \(z_N-z_N^* \in K_i\). At this point we see the computational benefits of \(\eta^{\alpha}_j\) over \(a^r\). Recall from the proof of Theorem \ref{t:meth2} that
\[
\lim_{N\rightarrow \infty}\sum_{i=1}^k \Pr \left( A_N \mbox{ and } z_N \in B \cap \mbox{int}  P_i\right) =1,
\]
where \(k\) is the number of selection functions for \(d(f_0)_S(z_0)\), \(A_N\) is as defined in (\ref{q:consubcon}) and \(K_i=\cone(P_i-z_0)\) .  Moreover when \(A_N\) holds and \(z_N \in B \cap \mbox{int} P_i\) it was argued that \(z_N-z_N^* \in \mbox{int} K_i\). Therefore with high probability each value of \(l\) we consider when finding \(\eta^{\alpha}_j\) will  involve evaluating (\ref{q:m2poi}) for a single index \(i\). In contrast, (\ref{q:poi}) involves a similar calculation for every cone in the subdivision. Finding \(h^{\alpha}_j\) will with high probability also require considering only a single index \(i\), but with the quantity evaluated being \( \Pr \left( |(Q_{N,i})_jM_{N,i}^{-1}Z| \le l \mbox{ and } Z \in T_i \right)\).

The question of finding \(a^r\), \(\eta^{\alpha}_j\) and \(h^{\alpha}_j\) in the piecewise linear case now becomes how to evaluate
\begin{equation}\label{q:bmpoi}
\Pr \left( |b_{N,i}^TZ-r| \le l \mbox{ and } Z \in T_i\right),
\end{equation}
where \(b_{N,i}^T=(M_{N,i}^{-1})_j\) when finding \(a^r\) and \(\eta^{\alpha}_j\), and \(b_{N,i}^T=(Q_{N,i})_jM_{N,i}^{-1}\) when finding \(h^{\alpha}_j\). When \(k=2\) and \(r=0\) evaluating (\ref{q:bmpoi}) is simplified by observing that the two cones in \(\Gamma'\) satisfy \(T_1=-T_2\) and the fact that \(Z\) and \(-Z\) have the same distribution. It then follows that
\begin{eqnarray}
\Pr \left( |b_{N,i}^TZ| \le l \mbox{ and } Z \in T_i \right)&&= 1/2 \Pr \left( |b_{N,i}^TZ| \le l \right)\nonumber \\
&&=\Pr\left(Z\in T_i\right) \Pr \left( |b_{N,i}^TZ| \le l \right). \nonumber
\end{eqnarray}
In this case no search is necessary for \(\eta^{\alpha}_j\) and \(h^{\alpha}_j\). Finding \(a^0\) may still require a search over different values of \(l\) but this search can refer to the cumulative distribution function of a standard Normal random variable to evaluate the necessary probabilities.

When \(k>2\) our approach to evaluating (\ref{q:bmpoi}) is to rewrite it as the probability of a Normal random vector being in a box with possibly infinite endpoints. Once formulated in this manner the probability can be evaluated either using the numerical techniques of \cite{metal:oprob} or the Monte Carlo and Quasi-Monte Carlo methods of \cite[Chapter 4]{g.b:np}, both of which are implemented in R package mvtnorm \cite{g.b:np,mvtnorm}. Comparisons of the methods for different problem sizes can be found in \cite{np:comp}. The method in \cite{metal:oprob} requires the Normal random vector of interest to have a non-singular covariance matrix, so we first consider a class of SVIs for which this condition holds when finding \(a^r\) and \(\eta^{\alpha}_j\).

When the SVI is a complementarity problem with \(S=\mathbb{R}^m \times \mathbb{R}_+^{n-m}\), where \(\mathbb{R}^k_+\) denotes the positive orthant, each of the polyhedral cones \(K_i \in \Gamma\) can be expressed as an \(n\)-dimensional box,
\[
K_i= [l_1^i,u_1^i]\times  \dots \times [l_n^i,u_n^i]
\]
with \(0\), \(\infty\) or \(-\infty\) as endpoints. Additionally by \(\Phi^{-1}_N(z_N)\Sigma_N^{1/2}\) a homeomorphism it follows that for each \(i=1,\dots,k\) and \(x\in \mathbb{R}^n\)
\[
x \in T_i  \Leftrightarrow \Phi^{-1}_N(z_N)\Sigma_N^{1/2}(x) \in K_i \Leftrightarrow M_{N,i}^{-1}x \in K_i.
\]
Therefore we can write
\begin{eqnarray}
&&\Pr \left(|(M_{N,i}^{-1})_jZ-r| \le l \mbox{ and } Z \in T_i \right) = \Pr \left(r-l \le (M_{N,i}^{-1})_jZ \le r+l \mbox{ and } M_{N,i}^{-1}Z \in K_i\right) \nonumber\\
&&=\Pr\left( M_{N,i}^{-1}Z \in [l_1^i,u_1^i]\times \dots \times [ \max(l_j^i,r-l),\min(u_j^i,r+l)]\times \dots \times [l_n^i,u_n^i] \right )\nonumber \\
&&= \Pr\left( \tilde{Z} \in [l_1^i,u_1^i]\times \dots \times [\max(l_j^i,r-l),\min(u_j^i,r+l)]\times \dots \times [l_n^i,u_n^i] \right) \nonumber
\end{eqnarray}
where \(\tilde{Z} \sim \mathcal{N}\left(0,M_{N,i}^{-1}M_{N,i}^{-T}\right)\). It follows that \( \tilde{Z}\) has a non-singular covariance matrix, and either method of evaluating the probability can be used. Note that this approach cannot be used to find \(h^{\alpha}_j\) due to the additional consideration of \( (Q_{N,i})_j\).

In general to compute \(a^r\), \(\eta^{\alpha}_j\) and \(h^{\alpha}_j\) we can use the structure of \(T_i\) being a polyhedral cone. In this case we express the cone as a system of linear inequalities,
\[
T_i=\left\{ x \in \mathbb{R}^n | C_ix \le 0_v \right\}
\]
for \(C_i\) some \(v\times n\) matrix and \(0_v\) the \(v\)-dimensional zero vector. We then rewrite
\begin{eqnarray}
&&\Pr\left( | b_{N,i}^TZ-r|\le l \mbox{ and  } C_iZ \le 0_v \right) =\Pr \left( \bar{Z} \in (-\infty,0]\times \dots \times (-\infty,0]\times [r-l, r+l] \right) \nonumber \\
&&\mbox{ where } \bar{Z} \sim \mathcal{N}\left(0_{v+1},D_iD_i^T\right)  \mbox{   and   } D_i=\left[\begin{array}{c} C_i \\b_{N,i}^T \end{array} \right]. \nonumber
\end{eqnarray}
When the covariance matrix of \(\bar{Z}\) is singular only the methods of \cite[Chapter 4]{g.b:np} may be employed.

The potential of having to search over values of \(l\) when finding \(a^r\) leads us to consider the question finding an upper bound for \(a^r\).  Since for linear functions \(a^r\) is easily found one might hope that for \(\psi\) piecewise linear with family of selection functions expressed as \(n\) dimensional row vectors \( \left\{ b^1,\dots,b^k\right\}\), with \( \|b^1\|\le \|b^2\| \le \dots \le \|b^k\|\), that \( a^r(\psi) \le a^r(b^k)\). This need not be true.

For example take
\[
b^1=\left[\begin{array}{c c}1/5 & 7/5 \end{array} \right], b^2=\left[\begin{array}{c c}7/5 & 1/5\end{array} \right], b^3= \left[\begin{array}{c c}1 & 1\end{array} \right],
\]
and \(\gamma_i= \left\{x\in \Rset^2 | C_ix \le 0\right\}\) for \(i=1,\dots,5\), where
\begin{eqnarray}
 C_1=\left[\begin{array}{c c}1 & -1 \\ 2 & -1 \end{array} \right],&& C_2=\left[\begin{array}{c c}-1 & 1 \\ -1 & 2 \end{array} \right], C_3=\left[\begin{array}{c c}  -2 & 1 \\ 1 & -2 \end{array} \right] , \nonumber \\
 C_4=\left[\begin{array}{c c} 1 & -1 \end{array} \right]  && \mbox{ and  }  C_5=\left[\begin{array}{c c} -1 & 1 \end{array} \right]. \nonumber
\end{eqnarray}
Note both \(\left\{ \gamma_1,\gamma_2,\gamma_3\right\}\) and \(\left\{\gamma_4, \gamma_5\right\}\) are conical subdivisions of \(\mathbb{R}^2\). Define \(\psi_1\) and \(\psi_2\) to be piecewise linear functions such that \(\psi_1|_{\gamma_i}=b^i\)  for \(i=1,2,3\), \(\psi_2|_{\gamma_4}=b^1\) and \(\psi_2|_{\gamma_5}=b^2\) . It follows that \( a^0(b^i)=a^0(\psi_2)=\sqrt{2\chi^2_1(\alpha)},\) \(i=1,2,3\). Next let
\begin{eqnarray}
R_1&&=\left\{ z \in \mathbb{R}^2 | -a^0(\psi_2) \le \psi_1(z) \le a^0(\psi_2)\right\}, \nonumber\\
R_2&&= \left\{ z \in \mathbb{R}^2 | -a^0(\psi_2) \le \psi_2(z) \le a^0(\psi_2)\right\} \nonumber.
\end{eqnarray}

As shown in Figure \ref{f:regfig}, the set \(R_2\) includes \(R_1\) as a subset with \(D=R_2\setminus R_1\) having a non-empty interior. Thus \( \Pr \left( Z \in R_1 \right) < \Pr \left( Z \in R_2 \right) \) and \(a^0(\psi_2) < a^0(\psi_1)\), showing that \(\max a^0(b^i)\) is not an upper bound for \(a^0(\psi_1)\).

\begin{figure}[h!]
\centering
\includegraphics[scale=.75]{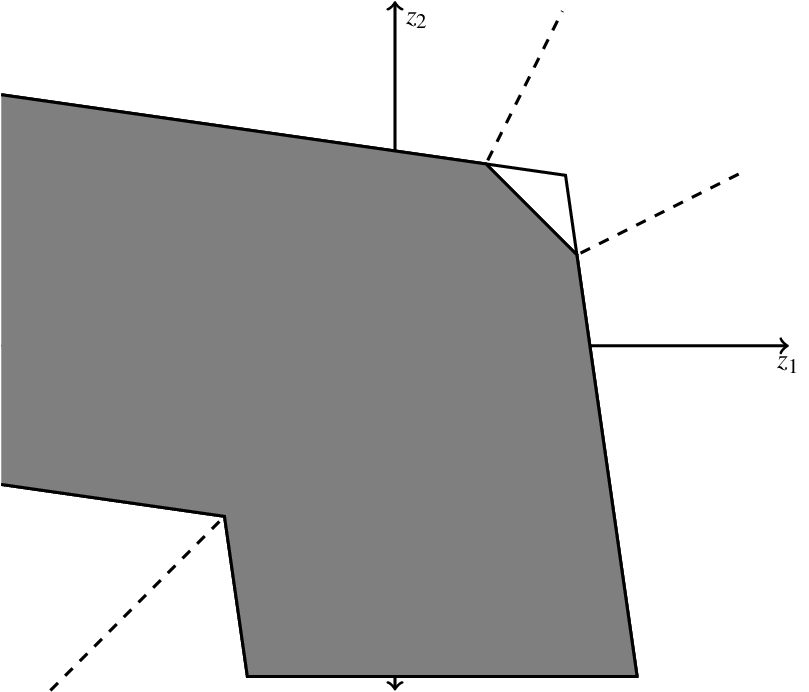}
\caption{Sets \(R_1\) (shaded) and \(R_2\) for \(\alpha
=.05\)}\label{f:regfig}
\end{figure}

To construct a valid upper bound for an interval's half width we will use the following Lemma.

\begin{lemma}\label{t:blem}
Let \(f:\mathbb{R}^n \rightarrow \mathbb{R}\) be a piecewise linear function with family of selection functions given by \(n\) dimensional row vectors \( \left\{ b_1,\dots,b_m \right\}  \) and corresponding conical subdivision \( \Gamma=\left\{ \gamma_1, \dots, \gamma_m \right\} \). Let \(Z \sim \mathcal{N}(0,I_n) \),
\(
c_j =\Pr \left( Z \in \gamma_j \right),
\)
and \(u>0\) be such that
\[
 \Pr \left( | b_jZ-r| \le u \right) \ge 1-c_j\alpha
\]
 for \(j=1,\dots,m\), \(\alpha \in (0,1)\). Then \( \Pr \left( -u \le f(Z)-r \le u \right) \ge 1-\alpha\).
\end{lemma}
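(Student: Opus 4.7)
The plan is to decompose the probability $\Pr(-u \le f(Z)-r \le u)$ according to the conical subdivision $\Gamma$ and bound each piece using the hypothesis. Since the intersections $\gamma_i \cap \gamma_j$ for $i \ne j$ are proper faces and hence lower-dimensional, $Z \sim \mathcal{N}(0, I_n)$ assigns them zero probability, so the $\gamma_j$ form a partition of $\mathbb{R}^n$ up to a Gaussian-null set and $\sum_{j=1}^m c_j = 1$.

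First I would write
\[
\Pr(|f(Z)-r|\le u) = \sum_{j=1}^m \Pr(|f(Z)-r|\le u,\ Z\in\gamma_j),
\]
and on each $\gamma_j$ use the identity $f(Z)=b_j Z$ to replace the event by $\{|b_jZ-r|\le u,\ Z\in\gamma_j\}$. The key step is then the elementary bound
\[
\Pr(|b_jZ-r|\le u,\ Z\in\gamma_j) \ge \Pr(Z\in\gamma_j) - \Pr(|b_jZ-r| > u) \ge c_j - c_j\alpha = c_j(1-\alpha),
\]
where the first inequality comes from $\Pr(A\cap B) \ge \Pr(B) - \Pr(A^c)$ with $A=\{|b_jZ-r|\le u\}$ and $B=\{Z\in\gamma_j\}$, and the second uses the hypothesis $\Pr(|b_jZ-r|\le u) \ge 1-c_j\alpha$.

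Summing over $j=1,\ldots,m$ and using $\sum_{j=1}^m c_j = 1$ then gives
\[
\Pr(-u\le f(Z)-r\le u) \ge (1-\alpha)\sum_{j=1}^m c_j = 1-\alpha,
\]
which is the desired conclusion. There is no real obstacle here; the only subtlety worth being careful about is the overlap of cones on their boundaries, which is handled by absolute continuity of the standard Gaussian. This is why the bound is usable: it decouples the piecewise-linear probability into $m$ one-dimensional Gaussian tail conditions on the individual selection functions $b_j$, each of which can be computed in closed form from the standard normal cdf.
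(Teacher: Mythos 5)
Your proof is correct and follows essentially the same route as the paper's: decompose over the cones (using that the boundaries are Gaussian-null so $\sum_j c_j = 1$), bound each term via $\Pr(A\cap B)\ge \Pr(B)-\Pr(A^c)$ together with the hypothesis, and sum. The paper phrases the key step as a union bound on the complement event $E_j^c$, which is the same inequality in contrapositive form.
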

\begin{proof}
Let \(E_j\) be the event that \( \left\{ |b_jZ-r| \le u \mbox{ and } Z \in \gamma_j \right\} \). As argued previously \( \Pr \left( |f(Z)-r| \le u \right)= \sum\limits_{j=1}^m \Pr \left( E_j \right) \). Next note
\begin{eqnarray}
\Pr \left( E_j^c \right) && \le \Pr \left( Z \in \gamma_j^c \right) + \Pr \left( |b_jZ-r| > u \right) \nonumber\\
&& \le 1-c_j+ c_j\alpha = 1-(1-\alpha)c_j. \nonumber
\end{eqnarray}
Thus \( \Pr \left( E_j \right) \ge (1-\alpha)c_j \) and
\[
\Pr \left( |f(Z)-r| \le u \right)  = \sum\limits_{j=1}^m \Pr \left( E_j \right) \ge (1-\alpha)\sum\limits_{j=1}^m c_j = 1-\alpha.
\]
\end{proof} \qed

\begin{corollary}\label{t:bcor}
Let \(\alpha_j = \alpha \Pr\left( Z \in \gamma_j\right) \), then \(u_j = \|b_j\|\sqrt{\chi^2_1(\alpha_j)}\) will satisfy \( \Pr \left( |b_jZ| \le u_j \right) \allowbreak=1-\alpha_j, \) and \(u=\max\limits_{1\le j \le m}u_j\) satisfies \( \Pr \left( |f(Z)| \le u \right) \ge 1-\alpha\).
\end{corollary}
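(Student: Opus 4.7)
The plan is to verify the two assertions in turn, both being short consequences of the definitions and of Lemma \ref{t:blem} applied with $r=0$.

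First I would establish the equality $\Pr(|b_j Z|\le u_j)=1-\alpha_j$. Since $Z\sim \mathcal{N}(0,I_n)$, the scalar $b_j Z$ is Normal with mean zero and variance $\|b_j\|^2$, so $(b_j Z)^2/\|b_j\|^2$ follows a $\chi^2_1$ distribution. By the very definition of $\chi^2_1(\alpha_j)$ as the $(1-\alpha_j)$-quantile of that distribution, one has
\[
\Pr\bigl((b_j Z)^2\le \|b_j\|^2\chi^2_1(\alpha_j)\bigr)=1-\alpha_j,
\]
which rearranges to $\Pr(|b_jZ|\le u_j)=1-\alpha_j$ with $u_j=\|b_j\|\sqrt{\chi^2_1(\alpha_j)}$.

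Next I would deduce the bound for $u=\max_{1\le j\le m} u_j$. The idea is to invoke Lemma \ref{t:blem} with $r=0$. For each $j$, since $u\ge u_j$, monotonicity of probability gives
\[
\Pr(|b_jZ|\le u)\ \ge\ \Pr(|b_jZ|\le u_j)\ =\ 1-\alpha_j\ =\ 1-c_j\alpha,
\]
which is exactly the hypothesis of Lemma \ref{t:blem} (with $r=0$). Applying the lemma then yields $\Pr(|f(Z)|\le u)\ge 1-\alpha$, as claimed.

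There is no real obstacle: the first part is a one-line reduction to the chi-squared quantile, and the second part is a direct application of the preceding lemma using the choice $\alpha_j = c_j\alpha$ so that the required inequality $1-c_j\alpha$ matches the constructed confidence level on each piece. The only thing worth stating carefully is that the $\alpha_j$'s sum to $\alpha$ because $\sum_j c_j=1$, which is what makes the overall bound come out to $1-\alpha$ rather than something looser.
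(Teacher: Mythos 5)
Your proof is correct and matches the paper's (implicit) argument: the paper offers no separate proof of the corollary precisely because it follows from the definition of \(\chi^2_1(\alpha_j)\) as the \((1-\alpha_j)\)-th percentile together with Lemma \ref{t:blem} applied with \(r=0\) and \(u=\max_j u_j\), exactly as you write. The only remark is that the observation \(\sum_j \alpha_j=\alpha\) is not actually needed in your argument (that accounting is already done inside the proof of Lemma \ref{t:blem}); what carries the conclusion is simply that \(u\ge u_j\) gives \(\Pr(|b_jZ|\le u)\ge 1-c_j\alpha\) for every \(j\).
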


Note that while Corollary \ref{t:bcor} provides an upper bound for \(a^0(f)\), Lemma \ref{t:blem} can similarly be used to find upper bounds for \(a^r\) when \(r \ne0\). Additionally upper bounds for \(\eta^{\alpha}_j\) and  \(h^{\alpha}_j\) can be found as in Corollary \ref{t:bcor} by considering only the subset of cones \(\gamma_i\) indicated by \(z_N-z_N^*\).

\section{Numerical Examples}\label{s:examps}
This section applies the proposed methods and the method of Theorem \ref{t:indold} to two numerical examples. The half-width of intervals produced using the method of Theorem \ref{t:indold} will be denoted by \(\upsilon^{\alpha}_j\). When calculating \(a^r\), \(\eta^{\alpha}_j\) or \(h^{\alpha}_j\) for a function with three or more selection functions, the approach used throughout the examples is to perform a binary search with probabilities calculated as in \S \ref{s:comp} using the methods of \cite[Chapter 4]{g.b:np}. This search terminates when either the distance between the upper and lower bounds or the probability of the value being tested are within specified tolerance levels.

In each example we are able to find the true solution allowing us to examine the coverage rates for the different methods. For each example we generate 2,000 SAA problems at each sample size of \(N\)=50, 100, 200 and 2,000. For each sample the value of \(r\) used for \(a^r\) is chosen by generating i.i.d. \(Z_v \sim \mathcal{N}(0,I_{n})\), calculating
\[
r_N=10^{-3}\sum\limits_{v=1}^{10^3}\Phi_N^{-1}(z_N)\Sigma_N^{1/2}(Z_v),
\]
and taking the appropriate coordinate of this vector. The use of this procedure will be indicated with the notation \(a^{r_N}\).

\subsection*{Example 1}
For the first example, we consider a non-complementarity problem with
\[
S=\left\{x \in \Rset^2  \; \Big| \;  \left[\begin{array}{c c} .5 & -1 \\- 2 & 1 \end{array} \right] \left[ \begin{array}{c} x_1 \\ x_2 \end{array} \right]\le \left[ \begin{array}{c} 0\\ 0 \end{array} \right] \right\} \mbox{  and   }  F(x,\xi)=\left[ \begin{array}{c c}
 4  & 0 \\
3  & 2
\end{array} \right] \left[  \begin{array}{c} x_1 \\ x_2 \end{array}\right]  + \left[  \begin{array}{c} \xi_1 \\  \xi_2 \end{array}\right] ,
\]
where \( \xi\) is uniformly distributed over the box \([-1,1]\times[-2,2]\). In this case
\[
f_0(x)=\left[ \begin{array}{c c} 4 & 0\\ 3 & 2 \end{array} \right],
\]
and the SVI and its corresponding normal map formulation have true solutions \(x_0=z_0=0\). The function \(d(f_0)_{S}(z_0)\) is then piecewise linear, with the family of selection functions given by matrices
\[
\left[ \begin{array}{c c} 4 & 0\\ 3 & 2 \end{array} \right], \left[ \begin{array}{c c} 1.6 &  1.2 \\ 1 & 3 \end{array}\right],\left[ \begin{array}{c c} 1 & 0 \\ 0 & 1 \end{array}\right] \mbox{  and   } \left[ \begin{array}{c c} 3.4 & 1.2 \\ 2.8 & 2.4 \end{array}\right]
\]
and the corresponding conical subdivision \( \{K_1,\: K_2,\; K_3 \mbox{ and } K_4\}\) given by \(K_i=\allowbreak \{x\in \Rset^2  \big| C_ix \le 0 \} \) with
\[
C_1=\left[\begin{array}{c c} .5 & -1 \\- 2 & 1 \end{array} \right] \; C_2=\left[\begin{array}{c c} 2 & -1 \\- .5 & -1 \end{array} \right] \;
C_3=\left[\begin{array}{c c} .5 & 1 \\-2 & 1 \end{array} \right] \mbox{ and } C_4=\left[\begin{array}{c c}  -2& -1 \\- .5 & 1 \end{array} \right].
\]
 With this information we evaluate (\ref{q:inold}) for \(\alpha=.05\) and observe values of .9454 and .9461 for \(j=1\) and 2 respectively.

 In Tables \ref{tab:covex1z1} and \ref{tab:covex1z2} we summarize the coverage rates of  \( (z_0)_1\) and \( (z_0)_2\) for each interval determined by \(\upsilon^{\alpha}_j\), \(a^{r_N}\) and \(\eta^{\alpha}_j\).
\begin{table}[h]
\begin{minipage}[b]{0.45\linewidth}
\centering
\caption{Coverage rates \( (z_0)_1\) \(\alpha=.05\)}\label{tab:covex1z1}
\begin{tabular}{| l|c|c|c | }
\hline
& \(\upsilon^{\alpha}_1\) & \(a^{r_N}\) & \(\eta^{\alpha}_1\) \\ \hline
N=50 &  94.25\% &  94.75\%  &  94.2\% \\ \hline
N=100 &   94.55\% & 94.95 \%  &  94.9\% \\ \hline
N=200 &  94.1 \%  &  94.55 \% &   94.85 \% \\ \hline
N=2,000 &  94.7\% &  95.35\%  & 95.45\% \\ \hline
\end{tabular}
\end{minipage}
\hfill
\begin{minipage}[b]{0.45\linewidth}
\centering
\caption{Coverage rates \( (z_0)_2\) \(\alpha=.05\)}\label{tab:covex1z2}
\begin{tabular}{| l|c|c|c | }
\hline
& \(\upsilon^{\alpha}_2\) & \(a^{r_N}\) & \(\eta^{\alpha}_2\)  \\ \hline
N=50 &   93.8\% &   95.95\%  & 93.65 \%\\ \hline
N=100 &   94.15\% &  95.5\%  &  93.65\% \\ \hline
N=200 &    94.2\% &  95.25 \%  &  94.95\% \\ \hline
N=2,000 &   94.9\% &  95.45 \%  & 95.4\%\\ \hline
\end{tabular}
\end{minipage}
\end{table}
We see that the overall performance of the three approaches  is generally comparable and in line with the specified 95\% level of confidence and (\ref{q:inold}).

Differences between the methods become apparent in Figure \ref{fig:ex1n2k} where for the samples of size 2,000 we plot the length of intervals for \((z_0)_2\)  by which $K_i$ contains \( z_N-z_0\). These differences are further illustrated in Table \ref{tab:cbcex1n2k} where we break down the coverage of \( (z_0)_2\) and average interval length by which  $K_i$ contains  \(z_N-z_0\).

\begin{figure}[h!]
\centering
\subfloat[\(\upsilon^{\alpha}_2\)]{
\includegraphics{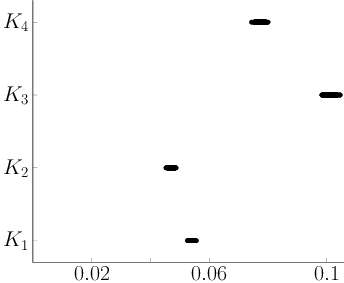}
}
\hfill
\subfloat[\(a^{r_N}\)]{
\includegraphics{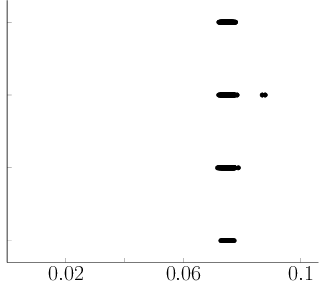}
}
\hfill
\subfloat[\(\eta^{\alpha}_2\)]{
\includegraphics{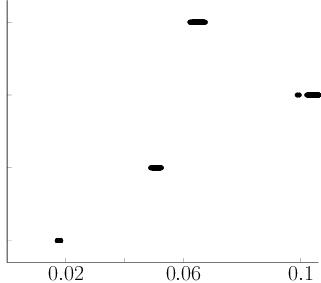}
}
\caption{Interval length for \( (z_0)_2\) by cone, \(N=2,000\)}\label{fig:ex1n2k}
\end{figure}
\begin{table}[h!]
\centering
\caption{Coverage of \( (z_0)_2 \) and half-width by cone, \(N=2,000\), \(\alpha=.05\)}\label{tab:cbcex1n2k}
\begin{tabular}{| c |c|c| c | c | c | c | c |  }
\hline
&  \multicolumn{3}{ | c |}{Coverage rate } &  \multicolumn{3}{ | c |}{Average length } \\ \cline{2-7}
Cone (samples in cone) &\(\upsilon^{\alpha}_2\) & \(a^{r_N}\) & \(\eta^{\alpha}_2\) & \(\upsilon^{\alpha}_2\) & \(a^{r_N}\) & \(\eta^{\alpha}_2\) \\ \hline
\(K_1\)(80) &  100\% &  100\% &  90\%&  .0541&  .0750&  .0177\\ \hline
\(K_2\) (689) & 92.31\% &  98.84\% &  95.21\%&  .0471&  .0749 & .0508\\ \hline
\(K_3\) (824) &  95.39\% &  90.29\% & 96.24 \%&  .1012& .0749 & .1051\\ \hline
\(K_4\) (407) & 97.3\% &  99.26\% &  95.09\%&   .0775&  .0749& .0649\\ \hline
\end{tabular}
\end{table}

The consistent value of \(a^{r_N}\) across samples is to be expected given Lemma \ref{t:lem1} and Proposition \ref{t:convprop}. Note that values of  \(a^{r_N}\) that deviate slightly from this pattern correspond to the two samples for which \(z_N^*\) was not contained in the relative interior of the same \(k\)-cell as \(z_0\). Across cones the performance of the intervals  varies, but this is accounted for in the definition of \(a^{r_N}\). Compare this with the intervals with half-width \(\upsilon^{\alpha}_2\). This approach does not directly account for the effect \(d(f_0)_S(z_0)\) being piecewise linear has on the asymptotic distribution of SAA solutions, and therefore the performance of the intervals. While in this example we can calculate (\ref{q:inold}) and observe that the intervals have an asymptotic level of confidence close to the desired 95\%, in general the varying performance across cones is not accounted for and the method may be unreliable. The value of \(\eta^{\alpha}_2\) also varies across cones, but its use of \(z_N-z_N^*\) and \(\Phi_N(z_N)\) to estimate the location of \(z_N-z_0\) in the conical subdivision of \(d(f_0)_S(z_0)\) allows for a level of confidence to be specified with less restrictive conditions. Additionally, the benefit of allowing \(\eta^{\alpha}_2\) to vary in a systematic way is seen in the more consistent performance of this approach across the four cones.

\begin{table}[h]
\centering
\caption{Coverage of \((x_0)_i\), \(\alpha=.05\)}\label{tab:x0cov}
\begin{tabular}{| l|c|c| }
\hline
& \((x_0)_1\) & \((x_0)_2\)  \\ \hline
N=50 &   96.05\% &   96.2\%  \\ \hline
N=100 &   97\% &   97.25\%  \\ \hline
N=200 &   97.1\% &   97.15\%  \\ \hline
N=2,000&   97\% &   95.33\%  \\ \hline
\end{tabular}
\end{table}

We next examine the performance of confidence intervals for \((x_0)_j\). For any real numbers \(l \le u\) neither \(\Pi_S(\Rset \times [l,u])\) nor \(\Pi_S( [l,u]) \times \Rset)\) result in sets that yield meaningful confidence intervals for \((x_0)_1\) or \((x_0)_2\). Therefore the indirect approach of projecting confidence intervals for \((z_0)_j\) onto \(S\) cannot be used and only the direct approach proposed in \S \ref{s:icim1} is applicable. Combining (\ref{q:icimres}) and the fact \(S \subset \Rset^2_+\) we consider \(\big[ \max\{0, (x_N)_j-N^{-1/2}h^{\alpha}_j\},\: (x_N)_j+N^{-1/2}h^{\alpha}_j \big]\) as the confidence interval for \((x_0)_j\).
\begin{table}[h!]
\centering
\caption{Intervals for \((x_0)_i\) by cone, \(N=2,000\), \(\alpha=.05\)}\label{tab:cex1n2k}
\begin{tabular}{|c| c | c | c | c |   }
\hline
&  \multicolumn{2}{ | c |}{Coverage rate } &  \multicolumn{2}{ | c |}{Average length } \\ \cline{2-5}
Cone (samples in cone) & \((x_0)_1\) & \((x_0)_2\) & \((x_0)_1\) & \((x_0)_2\)\\ \hline
\(K_1\)(80) &  88.75\% &   90\% &   .0104  & .0132 \\ \hline
\(K_2\) (689) &  95.36\% &   95.36\% &    .0089 & .0177 \\ \hline
\(K_3\) (824) &  100\% &   100\% &   0  &  0 \\ \hline
\(K_4\) (407) &  95.33\% &   95.33\% &   .0073  & .0036 \\ \hline
\end{tabular}
\end{table}
In Table \ref{tab:x0cov} we summarize the coverage of \((x_0)_1\) and \((x_0)_2\) at each sample size with \(\alpha=.05\) , and in Table \ref{tab:cex1n2k} we examine the performance and length of the intervals for the samples of size 2,000 broken down by the location of \(z_N-z_0\). Since the selection function corresponding to \(\Pi_{K_0}|_{K_3}\) is represented by the zero matrix when \(z_N-z_0 \in K_3\) the correct point \((x_N)_j=(x_0)_j=0\) is returned and as a result we see that the intervals for each component of \(x_0\) outperform the specified confidence level of \(95\%\).

\subsection*{Example 2}
For the second example we let \(S=\mathbb{R}^5_+\),
\[
F(x,\xi)=\left[ \begin{array}{c c c c c}
\xi_1 & 1.5 & .5 & .75 & .9 \\
1.5 & \xi_2  & 0 &.8 &1.5 \\
.5 & 0 & \xi_3 & .75 & 1.7 \\
.75 & .8 & .75 & \xi_4 & 1 \\
.9 & 1.5 & 1.7 & 1 & \xi_5
\end{array}\right] \left[ \begin{array}{c} x_1 \\ x_2 \\ x_3 \\ x_4 \\ x_5 \end{array}\right]
+ \left[\begin{array}{c} \xi_6 \\ \xi_7 \\ \xi_8 \\ \xi_9 \\ \xi_{10} \end{array}\right],
\]
 with \(\xi\) uniformly distributed over the box
\[
\left[2,4 \right] \times \left[0,4 \right] \times \left[0,3 \right] \times \left[2,6 \right] \times \left[-1,6 \right] \times \left[-1,1 \right] \times \left[-.5,.5 \right] \times \left[-2,2 \right] \times \left[-.75,.75 \right] \times \left[-1,1\right].
\]

The SVI and its normal map formulation have solutions \(x_0=z_0=0\). Moreover \(\Pi_{\mathbb{R}^5_+}=d\Pi_{\mathbb{R}^5_+}(z_0)\) with
\[
d\Pi_{\mathbb{R}^5_+}(z_0)(x)= \left[ \begin{array}{c c c c c}
h_1 & 0 & 0 & 0 & 0 \\
0 & h_2 & 0 & 0 & 0 \\
0 & 0 & h_3 & 0 & 0 \\
0 & 0 & 0 & h_4 & 0 \\
0 & 0 & 0  & 0 & h_5 \end{array}\right] \left[\begin{array}{c} x_1 \\ x_2 \\ x_3 \\ x_4 \\ x_5 \end{array}\right] \;  \mbox{ where } h_i =
\left\{
\begin{array}{lll}
0 & \mbox{if} & x_i\le 0, \\
1 & \mbox{if} & x_i \ge 0,
\end{array} \right.
\]
so \(d(f_0)_{\mathbb{R}^5_+}(z_0)(\cdot)\) is piecewise linear with a family of thirty-two selection functions. Taking \( \alpha=.05\) we first consider confidence intervals for \((z_0)_j\). Evaluating (\ref{q:inold}) for each value of \(j=1,\dots,5\) we observe that the intervals for \((z_0)_j\) considered in Theorem \ref{t:indold} have asymptotic levels of confidence  of 93.85\%, 93.33\%, 94.38\%, 93.39\% and 92.96\% respectively.

\begin{table}[h!]
\centering
\caption{Coverage rates for \( (z_0)_3\)}\label{tab:covz4}
\begin{tabular}{|c | c | c| c|}
\hline
 & \(\upsilon^{\alpha}_4\) & \(a^{r_N}\) & \(\eta^{\alpha}_4\) \\ \hline
\(N=50\) & 93.05\% & 96.3 \% & 93.3\%\\ \hline
\(N=100\) & 92.85\% & 99.95 \% & 92.8\% \\ \hline
\(N=200\) & 94\% & 94.7 \% & 94.95\%\\ \hline
\(N=2,000\) & 94.35\% & 94.6 \% & 94.8\%\\ \hline
\end{tabular}
\end{table}

Coverage rates of the confidence intervals are largely in line with the specified level of confidence or as indicated by (\ref{q:inold}), with the coverage rates of \( (z_0)_3\) summarized in Table \ref{tab:covz4} for each approach and sample size considered. The performance of the different methods broken down by where \(z_N-z_0\) falls in the conical subdivision associated with \(d(f_0)_S(z_0)\) cannot be as well observed, given the large number of cones relative to the number of samples. What we are able to observe is the consistent values of \(a^{r_N}\) across samples as compared to the values of \(\upsilon^{\alpha}_j\) and \(\eta^{\alpha}_j\), shown in Figure \ref{fig:exsz4} for \( (z_0)_3\) and \(N=\)2,000.
\begin{figure}[h]
\centering
\includegraphics[scale=.7]{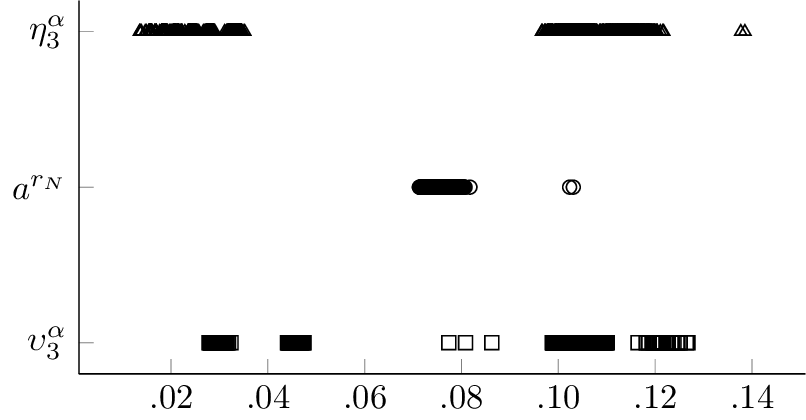}
\caption{Intervals Lengths for \( (z_0)_3, N=2,000\)}\label{fig:exsz4}
\end{figure}
 Note there are two samples for which the value of \(a^{r_N}\) deviate from this pattern, and as in the previous example they correspond to samples for which \(z_N^*\) and \(z_0\) are not contained in the relative interior of the same \(k\)-cell. In this example the computational benefits of \(\eta^{\alpha}_j\) are clear. For almost all of the samples calculating \(a^{r_N}\) required working with a piecewise linear function with thirty two selection functions, whereas for all of the samples calculating \(\eta^{\alpha}_j\) only involved a single selection function, leading to a dramatic reduction in the necessary computation.

With this example we also examine how upper bounds satisfying the conditions of Lemma \ref{t:blem} compare to the actual half-widths. In Table \ref{tab:btab} we summarize average and median ratio of bound to actual half-width for samples of size \(N=2,000\). While easier to compute we see that the bounds can be quite conservative. This is in large part due to their dependance on estimates of \(\Pr \left( Z \in T_i \right)\), especially in the case of bounds for \(a^{r_N}\) which require considering each \(T_i\).

\begin{table}[h]
\centering
\caption{Ratio of upper bound to interval half-width}\label{tab:btab}
\begin{tabular}{| l c | c | c | c | c| }
\hline
& & \multicolumn{2}{c |}{ \(a^{r_N}\)  } & \multicolumn{2}{c |}{ \(\eta^{\alpha}_j\) } \\ \cline{3-6}
 &   & Average ratio  & Median ratio  & Average ratio  & Median ratio \\ \hline
\multirow{5}{*}{\(N=2,000\)} & \((z_0)_1\) & 6.20&  6.33& 3.04 & 2.18 \\
 & \((z_0)_2\) &15.53 & 13.44 & 3.58 &  2.92 \\
 & \((z_0)_3\) &4.00 &  3.49 & 2.25  & 1.55 \\
 & \((z_0)_4\)&  5.27 & 5.26 &  3.69 & 2.37 \\
 & \((z_0)_5\) &  9.20 &  8.04 & 2.80 & 2.12 \\  \hline
\end{tabular}
\end{table}

When computing intervals for \((x_0)_j\)  note that since \(S=\Rset_+^5\) each selection function of \(d\Pi_S(z_N^*)\) is represented by a diagonal matrix with values of zero and one along the diagonal. When \(z_N-z_N^*\) falls into a cone for which the \(j^{\tiny\mbox{th}}\) diagonal element of the selection function's matrix representation is one from (\ref{q:adef2}) and (\ref{q:hici}) we see that \(\eta^{\alpha}_j\) and \(h^{\alpha}_j\) will be equal. The interval for \((x_0)_j\) produced using the approach of \S \ref{s:icim1} would then be the same as the projection onto \(S\) of the interval for \((z_0)_j\) produced using the approach of \S \ref{s:nmicim2}. If \(j^{\tiny\mbox{th}}\) diagonal element is zero the method of \S \ref{s:icim1} returns the correct point estimate \((x_N)_j=0\), whereas the projection onto \(S\) of the interval for \((z_0)_j\) produced using the approach of \S \ref{s:nmicim2} is given by \( \big[0,\max\big\{0,\:(z_N)_j+N^{-1/2}\eta^{\alpha}_j\big\} \big]\).
\begin{figure}[h]
\centering
\includegraphics[scale=.9]{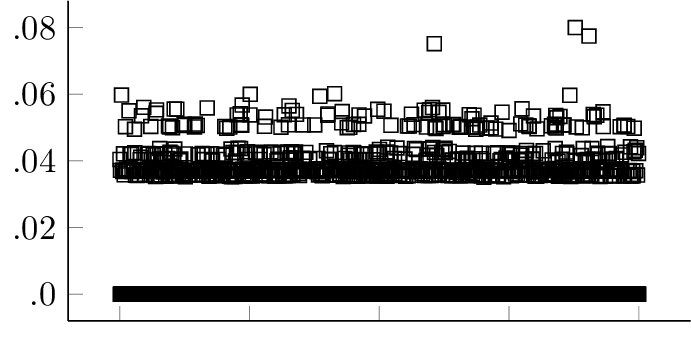}
\caption{Bounds for \( h^{\alpha}_3, N=2,000\) by sample}\label{fig:dbsx3}
\end{figure}
 The two approaches therefore produce intervals that cover \( (x_0)_j=0\) at an identical rate with the approach of \S \ref{s:icim1} returning the correct point estimate more often. Moreover the bound for \(h^{\alpha}_j\) provided by Corollary \ref{t:bcor} will have similar properties since these bounds consider adjusting only the value of \(\alpha\) and not the selection functions used. Therefore when the  \(j^{\tiny\mbox{th}}\) diagonal element of the indicated selection function is one the bounds for \(h^{\alpha}_j\) and \(\eta^{\alpha}_j\) will be the same, and if the \(j^{\tiny\mbox{th}}\) diagonal element is zero the bound for \(h^{\alpha}_j\) is also zero. This is illustrated in Figure \ref{fig:dbsx3} where we have plotted the bounds for \(h^{\alpha}_3\) for each sample of size 2,000.

\begin{acknowledgements}
Research of Michael Lamm and Shu Lu is supported by National Science Foundation under the grant DMS
-1109099. Research of Amarjit Budhiraja is supported in part by the National Science Foundation (DMS-1004418,DMS-1016441, DMS-1305120)
and the Army Research Office (W911NF-10-1-0158).
\end{acknowledgements}

\bibliographystyle{spmpsci}
\bibliography{svi}

\begin{thebibliography}{10}
\providecommand{\url}[1]{{#1}}
\providecommand{\urlprefix}{URL }
\expandafter\ifx\csname urlstyle\endcsname\relax
  \providecommand{\doi}[1]{DOI~\discretionary{}{}{}#1}\else
  \providecommand{\doi}{DOI~\discretionary{}{}{}\begingroup
  \urlstyle{rm}\Url}\fi

\bibitem{solcibs}
Anitescu, M., Petra, C.: Higher-order confidence intervals for stochastic
  programming using bootstrapping.
\newblock Tech. Rep. ANL/MCS-P1964-1011, Mathematics and Computer Science
  Division, Argonne National Laboratory, Argonne, IL (2011)

\bibitem{dem:acr}
Demir, M.C.: Asymptotics and confidence regions for stochastic variational
  inequalities.
\newblock Ph.D. thesis, University of Wisconsin, Madison (2000)

\bibitem{faccpang:fdvi}
Facchinei, F., Pang, J.S.: Finite-Dimensional Variational Inequalities and
  Complementarity Problems, vol.~I.
\newblock Springer, New York (2003)

\bibitem{g.b:np}
Genz, A., Bretz, F.: Computation of Multivariate Normal and t Probabilities.
\newblock Lecture Notes in Statistics. Springer-Verlag, Heidelberg (2009)

\bibitem{mvtnorm}
Genz, A., Bretz, F., Miwa, T., Mi, X., Leisch, F., Scheipl, F., Hothorn, T.:
  {mvtnorm}: Multivariate Normal and t Distributions (2013).
\newblock \urlprefix\url{http://CRAN.R-project.org/package=mvtnorm}.
\newblock R package version 0.9-9996

\bibitem{gur.oze.rob:asc}
G{\"u}rkan, G., Yonca~{\"O}zge, A., Robinson, S.M.: Sample-path solution of
  stochastic variational inequalities.
\newblock Mathematical Programming \textbf{84}, 313--333 (1999)

\bibitem{king.rock:conv}
King, A.J., Rockafellar, R.T.: Asymptotic theory for solutions in statistical
  estimation and stochastic programming.
\newblock Mathematics of Operations Research \textbf{18}, 148--162 (1993)

\bibitem{l:acr}
Lu, S.: A new method to build confidence regions for solutions of stochastic
  variational inequalities.
\newblock Optimization  (2012).
\newblock Published online before print at
  http://www.tandfonline.com/doi/abs/10.1080/02331934.2012.727556

\bibitem{lu:crc}
Lu, S.: Symmetric confidence regions and confidence intervals for normal map
  formulations of stochastic variational inequalities.
\newblock SIAM Journal on Optimization  (2014).
\newblock Forthcoming

\bibitem{l.b:acr}
Lu, S., Budhiraja, A.: Confidence regions for stochastic variational
  inequalities.
\newblock Mathematics of Operations Research \textbf{38}, 545--568 (2013)

\bibitem{np:comp}
Mi, X., Miwa, T., Hothorn, T.: mvtnorm: New numerical algorithm for
  multivariate normal probabilities.
\newblock The R Journal \textbf{1}(1), 37--39 (2009)

\bibitem{metal:oprob}
Miwa, T., Hayter, A., Kuriki, S.: The evaluation of general non-centred orthant
  probabilities.
\newblock Journal of the Royal Statistical Society: Series B (Statistical
  Methodology) \textbf{65}(1), 223--234 (2003)

\bibitem{jsp:kcone}
Pang, J.: Newton's method for b-differentiable equations.
\newblock Mathematics of Operations Research \textbf{15}, 311--341 (1990)

\bibitem{rob:ift}
Robinson, S.M.: An implicit-function theorem for a class of nonsmooth
  functions.
\newblock Mathematics of Operations Research \textbf{16}(2), 292--309 (1991)

\bibitem{rob:nmlt}
Robinson, S.M.: Normal maps induced by linear transformations.
\newblock Mathematics of Operations Research \textbf{17}(3), 691--714 (1992)

\bibitem{smr:sav}
Robinson, S.M.: Sensitivity analysis of variational inequalities by normal-map
  techniques.
\newblock In: F.~Giannessi, A.~Maugeri (eds.) Variational Inequalities and
  Network Equilibrium Problems, pp. 257--269. Plenum Press, New York (1995)

\bibitem{sch:ipdf}
Scholtes, S.: Introduction to Piecewise Differentiable Equations.
\newblock Springer (2012)

\bibitem{shap.dent.rus:lsp}
Shapiro, A., Dentcheva, D., Ruszczy{\'n}ski, A.P.: Lectures on Stochastic
  Programming: Modeling and Theory.
\newblock Society for Industrial and Applied Mathematics and Mathematical
  Programming Society (2009)

\bibitem{sha.xu:smp}
Shapiro, A., Xu, H.: Stochastic mathematical programs with equilibrium
  constraints, modelling and sample average approximation.
\newblock Optimization \textbf{57}, 395--418 (2008)

\bibitem{v:csets}
Vogel, S.: Universal confidence sets for solutions of optimization problems.
\newblock SIAM Journal on Optimization \textbf{19}(3), 1467--1488 (2008)

\bibitem{xu:exc}
Xu, H.: Sample average approximation methods for a class of stochastic
  variational inequality problems.
\newblock Asia-Pacific Journal of Operational Research \textbf{27}(1), 103--119
  (2010)

\end{thebibliography}

\end{document}